
\documentclass[11pt,reqno,tbtags]{amsart}
\usepackage{amssymb}
\usepackage{natbib}
\bibpunct[, ]{[}{]}{;}{n}{,}{,}

\ifx\pdfoutput\undefined
\usepackage[dvips]{graphicx}
\else
\usepackage[pdftex]{graphicx}
\pdfcompresslevel=9 \fi

\usepackage{epsfig}
\usepackage{latexsym}


\numberwithin{equation}{section}

\allowdisplaybreaks




\newtheorem{theorem}{Theorem}[section]
\newtheorem{lemma}[theorem]{Lemma}

\theoremstyle{definition}

\newtheorem{remark}[theorem]{Remark}

\theoremstyle{remark}

\newenvironment{romenumerate}{\begin{enumerate}
 }{\end{enumerate}}

\newcounter{oldenumi}
{\setcounter{oldenumi}{\value{enumi}}
\begin{romenumerate} \setcounter{enumi}{\value{oldenumi}}}
{\end{romenumerate}}


\newcounter{thmenumerate}
\newenvironment{thmenumerate}
{\setcounter{thmenumerate}{0}%
 \def\item{\par
 \refstepcounter{thmenumerate}\textup{(\roman{thmenumerate})\enspace}}
}
{}

\newcounter{xenumerate}   

\newcommand{\refT}[1]{Theorem~\ref{#1}}

\newcommand{\refL}[1]{Lemma~\ref{#1}}
\newcommand{\refR}[1]{Remark~\ref{#1}}
\newcommand{\refS}[1]{Section~\ref{#1}}

\newcommand{\refF}[1]{Figure~\ref{#1}}
\newcommand{\refApp}[1]{Appendix~\ref{#1}}

\newcommand{\refand}[2]{\ref{#1} and~\ref{#2}}



\begingroup
  \count255=\time
  \divide\count255 by 60
  \count1=\count255
  \multiply\count255 by -60
  \advance\count255 by \time
  \ifnum \count255 < 10 \xdef\klockan{\the\count1.0\the\count255}
  \else\xdef\klockan{\the\count1.\the\count255}\fi
\endgroup



\newcommand{\sumj}{\sum_{j=0}^\infty}
\newcommand{\sumk}{\sum_{k=0}^\infty}
\newcommand{\sumki}{\sum_{k=1}^\infty}
\newcommand{\sumkcc}{\sum_{k=200}^\infty}
\newcommand{\sumkzz}{\sum_{k=200}^\infty}
\newcommand{\sumjc}{\sum_{j=100}^\infty}
\newcommand{\sumell}{\sum_{\ell=0}^\infty}

\newcommand\set[1]{\ensuremath{\{#1\}}}

\newcommand\xpar[1]{(#1)}
\newcommand\bigpar[1]{\bigl(#1\bigr)}
\newcommand\Bigpar[1]{\Bigl(#1\Bigr)}
\newcommand\biggpar[1]{\biggl(#1\biggr)}
\newcommand\lrpar[1]{\left(#1\right)}
\newcommand\bigsqpar[1]{\bigl[#1\bigr]}
\newcommand\Bigsqpar[1]{\Bigl[#1\Bigr]}

\newcommand\lrsqpar[1]{\left[#1\right]}

\newcommand\bigabs[1]{\bigl|#1\bigr|}
\newcommand\abs[1]{|#1|}
\newcommand\Bigabs[1]{\Bigl|#1\Bigr|}

\newcommand\lrabs[1]{\left|#1\right|}
\def\rompar(#1){\textup(#1\textup)}    
\newcommand\xfrac[2]{#1/#2}

\newcommand\parfrac[2]{\lrpar{\frac{#1}{#2}}}

\newcommand\Bigparfrac[2]{\Bigpar{\frac{#1}{#2}}}

\def\xexp(#1){e^{#1}}

\newcommand\ntoo{\ensuremath{{n\to\infty}}}
\newcommand\Ntoo{\ensuremath{{N\to\infty}}}

\newcommand\norm[1]{\|#1\|}

\newcommand\downto{\searrow}
\newcommand\upto{\nearrow}

\newcommand\ie{i.e.\spacefactor=1000}
\newcommand\eg{e.g.\spacefactor=1000}

\newcommand\etc{etc.\spacefactor=1000}
\newcommand\cf{cf.\spacefactor=1000}
\newcommand{\as}{a.s.\spacefactor=1000}


\newcommand\ii{{i}}

\newcommand{\tend}{\longrightarrow}
\newcommand\dto{\overset{\mathrm{d}}{\tend}}

\newcommand\eqd{\overset{\mathrm{d}}{=}}

\newcommand\bbR{\mathbb R}
\newcommand\bbC{\mathbb C}
\newcommand\bbN{\mathbb N}  

\newcounter{CC}
\newcounter{cc}

\renewcommand\Re{\operatorname{Re}}
\renewcommand\Im{\operatorname{Im}}

\newcommand\E{\operatorname{\mathbb E{}}}
\renewcommand\P{\operatorname{\mathbb P{}}}
\newcommand\Var{\operatorname{Var}}

\newcommand\ga{\alpha}
\newcommand\gb{\beta}
\newcommand\gd{\delta}

\newcommand\gf{\varphi}
\newcommand\gam{\gamma}
\newcommand\gG{\Gamma}

\newcommand\gl{\lambda}

\newcommand\go{\omega}

\newcommand\gs{\sigma}

\newcommand\gth{\theta}

\newcommand\ett[1]{\boldsymbol1[#1]} 

\newcommand\qq{^{1/2}}
\newcommand\qqq{^{1/3}}
\newcommand\qqqq{^{1/4}}
\newcommand\qqw{^{-1/2}}
\newcommand\qqqw{^{-1/3}}
\newcommand\qqqqw{^{-1/4}}
\newcommand\qw{^{-1}}
\newcommand\qww{^{-2}}
\newcommand\qqqb{^{2/3}}
\newcommand\qqqbw{^{-2/3}}
\newcommand\qqc{^{3/2}}
\newcommand\qqcw{^{-3/2}}

\renewcommand{\=}{:=}

\newcommand\intot{\int_0^t}
\newcommand\intoo{\int_0^\infty}
\newcommand\intoooo{\int_{-\infty}^\infty}
\newcommand\intx[1]{\int_{#1-\infty\ii}^{#1+\infty\ii}}
\newcommand\intgs{\intx{\gs}}
\newcommand\oi{[0,1]}

\newcommand\dd{\,\textup{d}}

\newcommand\rhs{right-hand side}

\newcommand\Ai{\mathrm{Ai}}
\newcommand\AI{\mathrm{AI}} 
\newcommand\Bi{\mathrm{Bi}}
\newcommand\Gi{\mathrm{Gi}}
\newcommand\Hi{\mathrm{Hi}}

\newcommand\AS[1]{\cite[#1]{AS}}
\newcommand\glx{a}
\newcommand\intakoo{\int_{a_k}^\infty}

\newcommand\eith{e^{\ii \gth}}
\newcommand\eigf{e^{\ii \gf}}
\newcommand\intg{\int_{\Gamma}}
\newcommand\intgg{\int_{\gGG}}
\newcommand\intgn{\int_{\Gamma_N}}

\newcommand\zz{|z|}

\newcommand\vx[1]{V_{#1}}
\newcommand\vs{\vx{s}}
\newcommand\ws{W_{s}}
\newcommand\tM{\widetilde M}
\newcommand\xc{_{\gamc}}
\newcommand\ftm{f_{\tau,\tM}}
\newcommand\fm{f_{\tM}}
\newcommand\gc{g\xc}
\newcommand\cgc{\check g\xc}
\newcommand\kc{k\xc}
\newcommand\hcx[1]{h_{\gamc,\,#1}}
\newcommand\hcyc{\hcx{y+\gamc s^2}}
\newcommand\hca{\hcx{a}}
\newcommand\fcqqq{(4\gamc)\qqq}
\newcommand\fcqqqw{(4\gamc)\qqqw}
\newcommand\bccqqq{(2\gamc^2)\qqq}
\newcommand\bccqqqw{(2\gamc^2)\qqqw}
\newcommand\FT{\widehat}
\newcommand\gfc{c}
\newcommand\EN{S^{N}}
\newcommand\EM{S^{M}}
\newcommand\gGG{\gG'}
\newcommand\lcdots{\ldots\cdot}
\newcommand\gamc{\gamma}
\newcommand\Ex{\E_x}
\newcommand\tg{\widetilde g}
\newcommand\thx{\widetilde h}
\newcommand\tk{\widetilde k}
\newcommand\hthx{\widehat h}
\newcommand\htk{\widehat k}
\newcommand\ftt{f_\tau}
\newcommand\fttt{f_\tau(t)}
\newcommand\citetq[2]{\citeauthor{#2} \cite[{\frenchspacing #1}]{#2}} 
\newcommand\gfo{\gf_0}
\newcommand\gfoo{\gf_\infty}


\newcommand{\dt}{2^{1/3}}
\newcommand{\dtc}{2^{2/3}}

\newcommand{\Fi}{\varphi}
\newcommand{\II}{\infty}
\newcommand{\lb}{\left [}
\newcommand{\rb}{\right ]}

\newcommand{\kl}{[\hspace{-0.5 mm}[}
\newcommand{\kr}{]\hspace{-0.5 mm}]}
\newcommand{\non}{\nonumber}
\newcommand{\ba}{\begin{array}}
\newcommand{\ea}{\end{array}}
\newcommand{\bit}{\begin{itemize}}
\newcommand{\eit}{\end{itemize}}
\newcommand{\ben}{\begin{enumerate}}
\newcommand{\een}{\end{enumerate}}
\newcommand{\bal}{\begin{align}}
\newcommand{\bals}{\begin{align*}}
\newcommand{\bs}{\begin{skip}}
\newcommand{\eal}{\end{align}}
\newcommand{\eals}{\end{align*}}
\newcommand{\es}{\end{skip}}

\newcommand{\maple}{\texttt{Maple}}




\hyphenation{Upp-sala}

\newcommand\urladdrx[1]{{\urladdr{\def~{{\tiny$\sim$}}#1}}}

\begin{document}
\title
{The maximum of Brownian motion with parabolic drift}

\thanks{A major part of this research was
done while the authors all visited Institut Mittag-Leffler, Djursholm,
Sweden, during the program 'Discrete Probability' 2009.}

\date{January 28, 2009} 

\author{Svante Janson}
\address{Department of Mathematics, Uppsala University, PO Box 480,
SE-751~06 Uppsala, Sweden}
\email{svante.janson@math.uu.se}
\urladdrx{http://www.math.uu.se/~svante/}

\author{Guy Louchard}
\address{Universit\'e Libre de Bruxelles,
D\'epartement d'Informatique, CP 212, Boulevard du Triomphe, B-1050
Bruxelles, Belgium}
\email{louchard@ulb.ac.be}

\author{Anders Martin-L\"of}
\address{Department of Mathematics, Stockholm University,
SE-10691 Stockholm, Sweden}
\email{andersml@math.su.se}

\keywords{Brownian motion, parabolic drift, Airy functions.}
\subjclass[2000]{60J65}

\begin{abstract} 
We study the maximum of a Brownian motion with a parabolic drift; this
is a random variable that often occurs as a limit of the maximum of
discrete processes whose expectations have a maximum at an interior
point.
We give series expansions and integral formulas for the distribution
and the first two moments, together with numerical values to high precision.
\end{abstract}

\maketitle

\section{Introduction}\label{S:intro}

Let $W(t)$ be a two-sided Brownian motion with $W(0)=0$;
i.e., $(W(t))_{t\ge0}$ and
$(W(-t))_{t\ge0}$ are two independent standard Brownian
motions.
We are interested in the process 
\begin{equation}\label{wg}
  W_\gamma(t)\=W(t)-\gamma t^2
\end{equation}
for a given $\gamma>0$,
and in particular in its maximum
\begin{equation}\label{m}
  M_\gamma\=\max_{-\infty<t<\infty} W_\gamma(t)
=\max_{-\infty<t<\infty} \bigpar{W(t)-\gamma t^2}.
\end{equation}
We also consider the corresponding one-sided maximum
\begin{equation}\label{n}
  N_\gamma\=\max_{0\le t<\infty} W_\gamma(t)
=\max_{0\le t<\infty} \bigpar{W(t)-\gamma t^2}.
\end{equation}
Since the restrictions of $W$ to the positive and negative
half-axes are independent, we have the relation
\begin{equation}\label{mnn}
  M_\gamma\eqd\max(N_\gamma,N_\gamma')
\end{equation}
where $N_\gamma'$ is an independent copy of $N_\gamma$.

Note that (\as) $W_\gam\to-\infty$ as
$t\to\pm\infty$, so the maxima in \eqref{m}
and \eqref{n} exist and are finite; moreover, they are
attained at unique points and $M_\gam,N_\gam>0$.
It is easily seen (e.g., by Cameron--Martin) that $M_\gam$
and $N_\gam$ have absolutely continuous distributions.

\subsection{Scaling}

For any $a>0$, $W(at)\eqd a\qq W(t)$ (as processes on
$(-\infty,\infty)$), and thus
  \begin{align}
  M_\gam&= \max_{-\infty<t<\infty} \bigpar{W(at)-\gamma (at)^2}
\notag\\&
\eqd \max_{-\infty<t<\infty} \bigpar{a\qq W(t)-a^2\gamma t^2}
=a\qq M_{a\qqc\gam},	\label{b1m}
\intertext{and similarly}
N_\gam&\eqd a\qq N_{a\qqc\gam}. \label{b1n}
 \end{align}
The parameter $\gam$ is thus just a scale parameter, and it suffices
to consider a single choice of $\gam$. We
choose the normalization $\gam=1/2$ as the standard case, and
write $M\=M_{1/2}$, $N\=N_{1/2}$. In
general, \eqref{b1m}--\eqref{b1n} with $a=(2\gam)\qqqbw$ yield
\begin{equation}\label{scale}
  M_\gam \eqd (2\gam)\qqqw M, \qquad
  N_\gam \eqd (2\gam)\qqqw N.
\end{equation}

\begin{remark}
  More generally, if $a,b>0$, then
  \begin{equation}
	\label{wab}
 \max_{-\infty<t<\infty} \bigpar{aW(t)-b t^2}
=aM_{b/a}
\eqd\Bigparfrac{a^4}{2b}\qqq M.
  \end{equation}
\end{remark}

The purpose of this paper is to provide formulas for the distribution
function of $M$ and, in particular, its moments.
The main results are given in \refS{Smain}, with proofs and further
details in Sections \ref{SpfT1}--\ref{SpfTEM}. The numerical
computations are discussed in \refS{Snum}. We use many more or less
well-known results for Airy functions; for convenience we have
collected them in Appendices \ref{AiryA}--\ref{AiryB}. Finally,
\refApp{AD} discusses an interesting integral equation, while \refApp{Aalt}
contains an alternative proof of an important formula used in  our proofs.

\subsection{Background}

The random variable $M$ 
is studied by Barbour
\cite{B75}, Daniels and Skyrme \cite{DS85} and Groeneboom \cite{Groeneboom}.
It arises as a natural limit distribution
in many different problems, and in many related problems its expectation $\E M$
enters in a second order term for the asymptotics of means
or in improved normal approximations.
For various examples  and general results,
see for example Daniels \cite{D74,D89}, Daniels and Skyrme \cite{DS85},
Barbour \cite{B75,B81}, 
Smith \cite{Smith82},
Louchard,  Kenyon and Schott \cite{LKS97},
Steinsaltz \cite{Steinsaltz},
Janson \cite{SJ198}.
As discussed in several of these papers, the appearance of $M$
in these limit results
can be explained as follows, ignoring technical conditions:
Consider the maximum over time $t$ of a random process $X_n(t)$,
defined on a compact interval $I$, for example $\oi$, 
such that as $\ntoo$, the mean $\E X_n(t)$,
after scaling, converges to 
deterministic function $f(t)$, and that the
fluctuations $X_n(t)-\E X_n(t)$ are of smaller order and,
after a different scaling, converge to a gaussian process $G(t)$.
If we assume that $f$ is continuous on
$I$ and has a unique maximum at a point $t_0\in I$, then the maximum
of the process $X_n(t)$ is attained close to $t_0$. 
Assuming that $t_0$ is an interior point of $I$ and that $f$ is
twice differentiable at $t_0$ with $f''(t_0)\neq0$, we can locally at
$t_0$ approximate $f$ by a parabola and $G(t)-G(t_0)$ by a two-sided
Brownian motion (with some scaling), and thus $\max_t X_n(t)-X_n(t_0)$
is approximated by a scaling constant times the variable $M$,
see Barbour \cite{B75}.
In the typical case where the mean of $X_n(t)$ is of order $n$ and the
Gaussian fluctuations are of order $n\qq$, it is easily seen that
the correct scaling is that
$n\qqqw(\max_t X_n(t)-X_n(t_0))\dto cM$, for some $c>0$,
which for the mean 
gives $\E\max_t X_n(t)=n f(t_0)+n\qqq c\E M+o(n\qqq)$, see
\cite{B75,D74,D89}. 
As examples of applications in algorithmic and data structures analysis,  
this type of asymptotics appears in 
the analysis of linear lists, priority queues and dictionaries 
\cite{L87,LKS97} 
and in a sorting algorithm
\cite{SJ198}.

\section{Main results}\label{Smain}

The mean of $M$ can be expressed as integrals involving the Airy
functions $\Ai$ and $\Bi$, for example as follows.
\begin{theorem}[\citet{DS85}]
  \label{TEM}
  \begin{align}
 \E M&=
-\frac{2\qqqw}{2\pi\ii}\int_{-\ii\infty}^{\ii\infty} \frac{z\dd z}{\Ai(z)^2}
=\frac{2\qqqw}{2\pi\ii}\int_{-\infty}^{\infty} \frac{y\dd y}{\Ai(\ii y)^2}
\label{em1}\\	
&=
{2\qqqb}\int_{0}^{\infty} 
\frac{{\Ai(t)^2+\sqrt3\Ai(t)\Bi(t)}}
{{\Ai(t)^2+\Bi(t)^2}}\,\dd t
\label{em3}\\	
&=
{2\qqqb}\Re\lrpar{\bigpar{1+\ii\sqrt3}\int_{0}^{\infty} 
\frac{{\Ai(t)}}
{\Ai(t)+\ii\Bi(t)}\,\dd t}
\label{em4}\\	
&=
\frac{2\qqqb}{\pi}\int_{0}^{\infty} 
\frac{{\sqrt3\Bi(t)^2-\sqrt3\Ai(t)^2+2\Ai(t)\Bi(t)}}
{\bigpar{\Ai(t)^2+\Bi(t)^2}^2}\,t\dd t.
\label{em2}	
  \end{align}
\end{theorem}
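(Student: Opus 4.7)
I would establish \eqref{em1} first, and then derive \eqref{em3}--\eqref{em2} by a contour deformation together with standard Airy-function algebra.

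\emph{Step 1 (\eqref{em1}).} Use \eqref{mnn}: $\P(M\le x)=\P(N\le x)^2$. The one-sided survival probability $\P(N\le x)$ for $\gam=1/2$ is the probability that Brownian motion never crosses the parabola $t\mapsto x+t^2/2$; its Laplace transform in $x$ is a classical object expressible through ratios of Airy functions, since the relevant space--time harmonic functions satisfy, after an affine change of variables, the Airy equation $u''=zu$. Inverting the Laplace transform and using $\E M=\int_0^\infty\P(M>x)\dd x$ together with the Wronskian $\Ai\Bi'-\Ai'\Bi=1/\pi$ to simplify, one arrives at the Mellin--Barnes integral in \eqref{em1}. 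The equality of the two forms in \eqref{em1} is then just the substitution $z=\ii y$, $\dd z=\ii\dd y$, so that $z\dd z=-y\dd y$ and the Jacobian $\ii$ is absorbed.

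\emph{Step 2 (\eqref{em1}\,$\to$\,\eqref{em3}).} This is the main analytic step. Since $\Ai(z)$ has zeros only on $(-\infty,0)$, the integrand $z/\Ai(z)^2$ is holomorphic on $\bbC\setminus(-\infty,0]$, and the standard sector-dependent Airy asymptotics yield rapid decay of $1/\Ai(z)^2$ along rays sufficiently far from the negative real axis. I would deform $\ii\bbR$ to a pair of complex-conjugate rays $\arg z=\pm\gth_0$ with $\gth_0\in(\pi/3,\pi)$, the large arc at infinity and the small arc around $0$ contributing nothing. On each such ray the standard connection formula writes $\Ai(te^{\pm\ii\gth_0})$ as a $\bbC$-linear combination of $\Ai(t)$ and $\Bi(t)$ for real $t>0$. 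The two rays give complex-conjugate contributions which combine into a single real integral over $t\in[0,\infty)$; applying $|a\Ai(t)+b\Bi(t)|^2=|a|^2\Ai(t)^2+2\Re(a\bar b)\Ai(t)\Bi(t)+|b|^2\Bi(t)^2$ with the correct coefficients $a,b\in\bbC$ then produces the integrand of \eqref{em3}, the $\sqrt3$ originating from the rotation by $e^{\ii\pi/3}$ and the prefactor $2\qqqb$ arising from combining the two conjugate rays with the original $2\qqqw$.

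\emph{Step 3 (\eqref{em3}\,$\to$\,\eqref{em4}~and~\eqref{em2}).} Formula \eqref{em4} is the algebraic identity
\[
\frac{\Ai(t)^2+\sqrt3\,\Ai(t)\Bi(t)}{\Ai(t)^2+\Bi(t)^2}=\Re\frac{(1+\ii\sqrt3)\Ai(t)}{\Ai(t)+\ii\Bi(t)},
\]
obtained by multiplying numerator and denominator on the right by the conjugate $\Ai(t)-\ii\Bi(t)$ and taking the real part. Formula \eqref{em2} follows from \eqref{em3} by integration by parts: differentiating $1/(\Ai(t)^2+\Bi(t)^2)$ and then applying $\Ai''=t\Ai$, $\Bi''=t\Bi$ together with the Wronskian $\Ai\Bi'-\Ai'\Bi=1/\pi$ converts the integrand of \eqref{em3} into $(t/\pi)$ times the integrand of \eqref{em2}, with boundary terms vanishing since $\Bi(t)\to\infty$ and $\Ai(t)\to 0$. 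The principal obstacle in the whole argument is Step 2: one has to pick the right rays, verify the sector-dependent decay of $\Ai$ to justify the deformation, and apply the connection formulas so that the constants ($\sqrt3$ and $2\qqqb$) come out exactly as in \eqref{em3}.
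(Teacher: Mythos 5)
Your Step~2/Step~3 outline for the chain \eqref{em1}\,$\Rightarrow$\,\eqref{em3}/\eqref{em4}/\eqref{em2} is broadly right but has the logical order reversed, and your Step~1 has a genuine gap that the paper does not share.

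\emph{Step 1.} You propose to get \eqref{em1} from the factorization $\P(M\le x)=\P(N\le x)^2$ and a Laplace-transform/Airy expression for $\P(N\le x)$. But $\P(M>x)=2G(x)-G(x)^2$, so $\E M=\int_0^\infty\P(M>x)\dd x$ picks up $\int_0^\infty G(x)^2\dd x$, and since $G$ itself is given by a contour integral of an Airy ratio (cf.\ \eqref{lj2}), the square produces a \emph{double} contour integral. That is exactly what appears in \refT{TJ1} and \eqref{emc2}; collapsing it to the \emph{single} integral \eqref{em1} is the whole difficulty and is not addressed by your sketch. The paper avoids the issue by a different route: it considers the truncated maximum $V_s=\max_{t\ge -s}(W(t)-\gamc t^2)$, uses Groeneboom's explicit joint density for the location and height of this maximum (a convolution $\hcyc*\cgc$), computes $\E e^{z\tM_s}$ as a single Fourier-type integral, differentiates at $z=0$, and lets $s\to\infty$. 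The two-sided structure is handled once and for all by the pinning at $-s$, so no squaring and no double integral ever arises. Your plan would instead require an additional, nontrivial reduction of a double contour integral to a single one, which you do not supply.

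\emph{Step 2.} Deforming the imaginary axis in \eqref{em1} onto the rays $\arg z=\pm 2\pi/3$ (the precise angle matters; it must lie in the decay sector for $1/\Ai(z)^2$) and writing $z=te^{\pm 2\pi\ii/3}$ gives $z\dd z=te^{\pm 4\pi\ii/3}\dd t$, so the combined real integral inherits a factor $t\dd t$. After applying $\Ai(te^{\pm2\pi\ii/3})=\tfrac12 e^{\pm\pi\ii/3}(\Ai(t)\mp\ii\Bi(t))$ one lands exactly on \eqref{em2}, \emph{not} on \eqref{em3}: the direct deformation cannot produce an integrand without the extra factor $t$. In the paper the passage to \eqref{em3} uses the antiderivative $\int\dd z/\Ai(z)^2=\pi\Bi(z)/\Ai(z)$ (Albright--Gavathas) and an integration by parts on the deformed contour; equivalently, one goes \eqref{em2}\,$\Rightarrow$\,\eqref{em3} by the IBP you describe in Step~3 (which checks out: with $f=(\Ai^2+\sqrt3\Ai\Bi)/(\Ai^2+\Bi^2)$ one has $f'=-\tfrac1\pi(\sqrt3\Bi^2-\sqrt3\Ai^2+2\Ai\Bi)/(\Ai^2+\Bi^2)^2$ using only the Wronskian, and $\int_0^\infty f=-\int_0^\infty tf'$). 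Your \eqref{em3}\,$\Leftrightarrow$\,\eqref{em4} algebra is correct. So: fix the direction of Step~2 (deformation gives \eqref{em2}) and then Step~3 handles \eqref{em3} and \eqref{em4}; and replace Step~1 with an argument that actually produces a single integral, since the $F_M=F_N^2$ route does not.
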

The expressions \eqref{em1} and  
\eqref{em3}
(unfortunately with typos in the latter)
are given by \citet{DS85}.
Since detailed proofs of the formulas are not given there, we for
completeness give a complete proof in \refS{SpfTEM}. (The
proof includes a direct analytical verification of the equivalence of
\eqref{em1} and \eqref{em3}, which was left open in \cite{DS85}.)

By \eqref{Aioo} and \eqref{Bioo},
$|\Ai(\ii y)|$ increases superexponentially as $y\to\pm\infty$, 
while
$\Ai(t)$ decreases superexponentially and
$\Bi(t)$ increases superexponentially as
$t\to\infty$; hence, the integrands in the integrals in \refT{TEM}
all decrease superexponentially and the integrals converge rapidly, so
they are suited for numerical calculations.
We obtain by numerical integration (using \maple{}),  improving the numerical
values in \cite{B75,B81,DS85,D89},
\begin{equation}\label{em0}
 \E M
= 
 0.99619\, 30199\, 28363\, 11660\, 37766\dots 
\end{equation}

We do not know any similar integral formulas for the second 
moment of $M$ (or higher moments). Instead we give expressions
using infinite series, summing over 
the zeros $\glx_k$, $k\ge1$,  of the Airy function, see \refApp{AiryA}.
Recall that these zeros all are real and negative, so we have
$0>\glx_1>\glx_2>\dots$, see
\AS{(10.4.94)} and \refApp{AiryA}; note that
$\abs{a_k}\asymp k\qqqb$, see \eqref{gl}.
(We use $x_n\asymp y_n$, for two sequences of positive
numbers $x_n$ and $y_n$, to denote that $0<\liminf_\ntoo x_n/y_n
\le \limsup_\ntoo x_n/y_n <\infty$; this is also denoted
$x_n=\Theta(y_n)$.)

We first introduce more notation.
Let $F_N(x)$ be the distribution function of $N$, \ie,
$F_N(x)\=\P(N\le x)$, and let
 $F_M(x)$ be the distribution function of $M$; further, let
$G(x)=1-F_N(x)=\P(N>x)$ and $G_M(x)=1-F_M(x)=\P(M>x)$
be the corresponding tail probabilities.
Then, by \eqref{mnn},
\begin{equation}\label{Fm}
  F_M(x)\=\P(M\le x)=\P(N\le x)^2=F_N(x)^2.
\end{equation}
and, equivalently,
\begin{equation}\label{Gm}
  G_M(x)\=1-(1-G(x))^2=2G(x)-G(x)^2.
\end{equation}

If we know $G(x)$, we thus know the distribution of both
$N$ and $M$, and we can compute moments by
\begin{align}
  \E N^p &= p\intoo x^{p-1} G(x)\dd x,
\label{enp}
\\
  \E M^p &= p\intoo x^{p-1} G_M(x)\dd x
        = p\intoo x^{p-1} \bigpar{2G(x)-G(x)^2}\dd x.
\label{emp}
\end{align}

Two formulas for the distribution function are given in the following
theorem. Others are given in \eqref{sal2} and \refL{LJ2}.
The proof is given in \refS{SpfT1}.

\begin{theorem}
  \label{T1}
The distribution functions of $M$ and $N$ are
$F_M(x)=(1-G(x))^2$ and $F_N(x)=1-G(x)$, where
\begin{equation}\label{guy}
G(x)=
 \pi\sumki
\frac{\Hi(\glx_k)}
{\Ai'(\glx_k)} 
{\Ai(\glx_k+2\qqq x)},
\qquad x>0.
\end{equation}
The sum converges conditionally but not absolutely for every $x>0$.
Alternatively, with an absolutely convergent sum,
for $ x\ge0$,
\begin{equation}\label{guy1}
G(x)=
\frac{\Ai(2\qqq x)}{\Ai(0)}
+\sumki 
\frac{\pi\Hi(\glx_k)+\glx_k\qw}
{\Ai'(\glx_k)} 
{\Ai(\glx_k+2\qqq x)}.
\end{equation}
\end{theorem}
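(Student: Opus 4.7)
The plan is to identify $G(x) = \P(N > x)$ with the first-passage probability for standard Brownian motion to the parabola $y = x + t^2/2$, and then solve the associated absorbing-boundary heat equation by separation of variables in the Airy eigenbasis. By the scaling \eqref{scale} I take $\gamma = 1/2$ throughout, so that $G(x) = \P(\exists\, t \ge 0 : W(t) = x + t^2/2)$; the sub-barrier density $p(t,y)\, \dd y = \P(W(t) \in \dd y,\ W(s) < x + s^2/2 \ \forall s \le t)$ then satisfies $\partial_t p = \tfrac12 \partial_y^2 p$ on $y < x + t^2/2$ with Dirichlet condition on the parabola and $p(0,y) = \delta(y)$.

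The first technical step is to straighten the boundary by setting $\eta = x + t^2/2 - y$ (placing the barrier at $\eta = 0$) and then applying the gauge transformation $q(t,\eta) = e^{t\eta - t^3/6}\, r(t,\eta)$, with exponents chosen precisely so that the time-dependent $t\partial_\eta$ drift and the $t^2$ potential introduced by the first substitution both cancel. This yields the time-homogeneous separable PDE
\bals
\partial_t r = \tfrac12 \partial_\eta^2 r - \eta r, \qquad r(t,0) = 0, \quad \eta > 0.
\eals
Separation of variables $r(t,\eta) = e^{-\mu t}\phi(\eta)$ gives $\phi''(\eta) = 2(\eta - \mu)\phi(\eta)$, which under $\tilde\eta = 2^{1/3}(\eta - \mu)$ is exactly Airy's equation. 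The $L^2$ Dirichlet eigenfunctions on $\eta > 0$ are $\phi_k(\eta) = \Ai(a_k + 2^{1/3}\eta)$ with eigenvalues $\mu_k = -2^{-1/3} a_k > 0$; they form a complete orthogonal basis of $L^2(0,\infty)$ with squared norms $\|\phi_k\|^2 = 2^{-1/3}\Ai'(a_k)^2$, using the standard identity $\int_a^\infty \Ai(u)^2\dd u = \Ai'(a)^2 - a\Ai(a)^2$ at $a = a_k$.

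Expanding $\delta(\eta - x) = \sum_k c_k \phi_k(\eta)$ with $c_k = 2^{1/3}\Ai(a_k + 2^{1/3}x)/\Ai'(a_k)^2$, reconstructing $q$, and computing the Fokker--Planck flux into the boundary give
\bals
G(x) = \tfrac12 \int_0^\infty \partial_\eta q(t,0)\dd t = 2^{-1/3}\sum_k \frac{\Ai(a_k + 2^{1/3}x)}{\Ai'(a_k)} \int_0^\infty e^{-t^3/6 - \mu_k t}\dd t.
\eals
Rescaling $t = 2^{1/3} s$ identifies the inner integral with the representation $\Hi(z) = \pi^{-1}\int_0^\infty \exp(zs - s^3/3)\dd s$ evaluated at $z = -2^{1/3}\mu_k = a_k$, producing the factor $2^{1/3}\pi\, \Hi(a_k)$ and yielding \eqref{guy}.

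For the absolutely convergent variant \eqref{guy1}, the asymptotic $\pi\Hi(z) = -1/z + O(z^{-4})$ as $z \to -\infty$ (Taylor-expand $e^{-t^3/3}$ near $t = 0$ inside the $\Hi$-integral and integrate term by term) combined with $|a_k| \asymp k^{2/3}$, $|\Ai'(a_k)| \asymp k^{1/6}$, and the oscillatory bound $|\Ai(a_k + 2^{1/3} x)| \lesssim |a_k|^{-1/4}$ shows that the summands in \eqref{guy} are only of order $k^{-1}$ (conditional convergence), while the modified summands in \eqref{guy1} decay like $k^{-3}$ (absolute convergence). Reconciling the two expressions amounts to proving the identity $\sum_k \Ai(a_k + 2^{1/3}x)/(a_k\Ai'(a_k)) = -\Ai(2^{1/3}x)/\Ai(0)$, which I would establish by contour integration of $\Ai(z + 2^{1/3}x)/(z\Ai(z))$ over expanding contours, using the strict positivity $2^{1/3}x > 0$ to guarantee the needed decay at infinity in the complex plane. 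The main obstacle throughout will be justifying the termwise $t$-integration at the previous step in the face of merely conditional convergence; this is most cleanly handled by first integrating over $t \ge \epsilon > 0$, applying Fubini on the absolutely convergent inner sum (the factor $e^{-\mu_k \epsilon}$ restores uniform summability in $k$), and then sending $\epsilon \to 0^+$ with \eqref{guy1} itself serving as the absolutely convergent comparison.
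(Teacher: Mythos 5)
Your approach is genuinely different from the paper's. Instead of taking Salminen's first-passage density \eqref{sal} as a black-box starting point and passing through a contour-integral intermediate representation (Lemmas \ref{LJ1}--\ref{LJ2}), you derive the spectral expansion of the hitting density from scratch: straighten the parabolic boundary, apply the gauge transformation $e^{t\eta-t^3/6}$ to kill the time-dependent drift and potential, separate variables in the resulting autonomous Schr\"odinger-type equation $\partial_t r=\tfrac12\partial_\eta^2 r-\eta r$, and expand in the Dirichlet Airy eigenbasis $\phi_k(\eta)=\Ai(a_k+2^{1/3}\eta)$. The formal computation checks out throughout: the cancellation in the gauge transformation, the eigenvalue identification $\mu_k=-2^{-1/3}a_k$, the squared norms $\|\phi_k\|^2=2^{-1/3}\Ai'(a_k)^2$ via \eqref{IAi2k}, the flux computation $f_\tau(t)=\tfrac12\partial_\eta\tilde p(t,0)$, and the change of variable identifying $\int_0^\infty e^{-t^3/6-\mu_k t}\,\dd t=2^{1/3}\pi\Hi(a_k)$. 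This is close in spirit to the paper's alternative derivation in \refApp{Aalt}, but where that appendix proceeds via the resolvent (Laplace transform in $t$ and the Sturm--Liouville Green's function), you use the full spectral decomposition and so land directly on the sum over zeros without ever writing down the Laplace transform \eqref{lt}. What your route buys is a self-contained, transparent path to \eqref{guy}; what the paper's route buys is that all the delicate convergence is localized in the absolutely convergent contour integral \eqref{lj2}, with the residue sums extracted only at the very end via the explicit contour estimates of Lemmas \refand{LA2}{LA3}.

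There is, however, a genuine gap in how you handle convergence, and it sits exactly where you flag the obstacle. Your proposal to integrate over $t\ge\eps$, apply Fubini, and then send $\eps\to0^+$ ``with \eqref{guy1} itself serving as the absolutely convergent comparison'' is circular as written: \eqref{guy1} is one of the two things to be proved. The repair, which you gesture at but do not carry through, is to reverse the order of the argument. Inside the $\eps$-regularized sum, peel off the leading behaviour of the $t$-integral --- either by substituting $\pi\Hi(a_k)=-a_k^{-1}+\Fi(k)$ (using \eqref{Hiz4}) or, equivalently, by a single integration by parts in $t$ that extracts the $1/\mu_k$ term. Dominated convergence then gives \eqref{guy1} directly, since the corrected coefficients decay like $k^{-17/6}$. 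The correction series $\sum_k a_k^{-1}\Ai(a_k+2^{1/3}x)/\Ai'(a_k)$ must then be handled separately; the contour integration of $\Ai(z+2^{1/3}x)/(z\Ai(z))$ that you propose is the right tool, but it does double duty that you should make explicit: closing on the right evaluates the sum as $-\Ai(2^{1/3}x)/\Ai(0)$, while the estimate along the expanding contours $\gG_N$ (compare \refL{LA2}) is what establishes that the partial sums in the natural order $k=1,2,\dots$ actually converge, which is what ``conditional convergence'' of \eqref{guy} means. Only after both pieces are in place can you add the two series term by term to recover \eqref{guy}. Finally, note that the hypothesis $x>0$ is doing two separate jobs: it supplies the decay $\Ai(z+2^{1/3}x)/\Ai(z)=O(e^{-cx|z|^{1/2}})$ needed to close contours, and it is also essential because \eqref{guy} genuinely diverges (indeed every term vanishes) at $x=0$, whereas \eqref{guy1} extends to $x=0$ by continuity of its absolutely convergent sum, as noted in \refR{Rdiv}.
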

The  function $G(x)$ is plotted
in Figure \ref{FG}.

\begin{figure}[htbp]
	\centering
		\includegraphics[width=0.8\textwidth,angle=0]{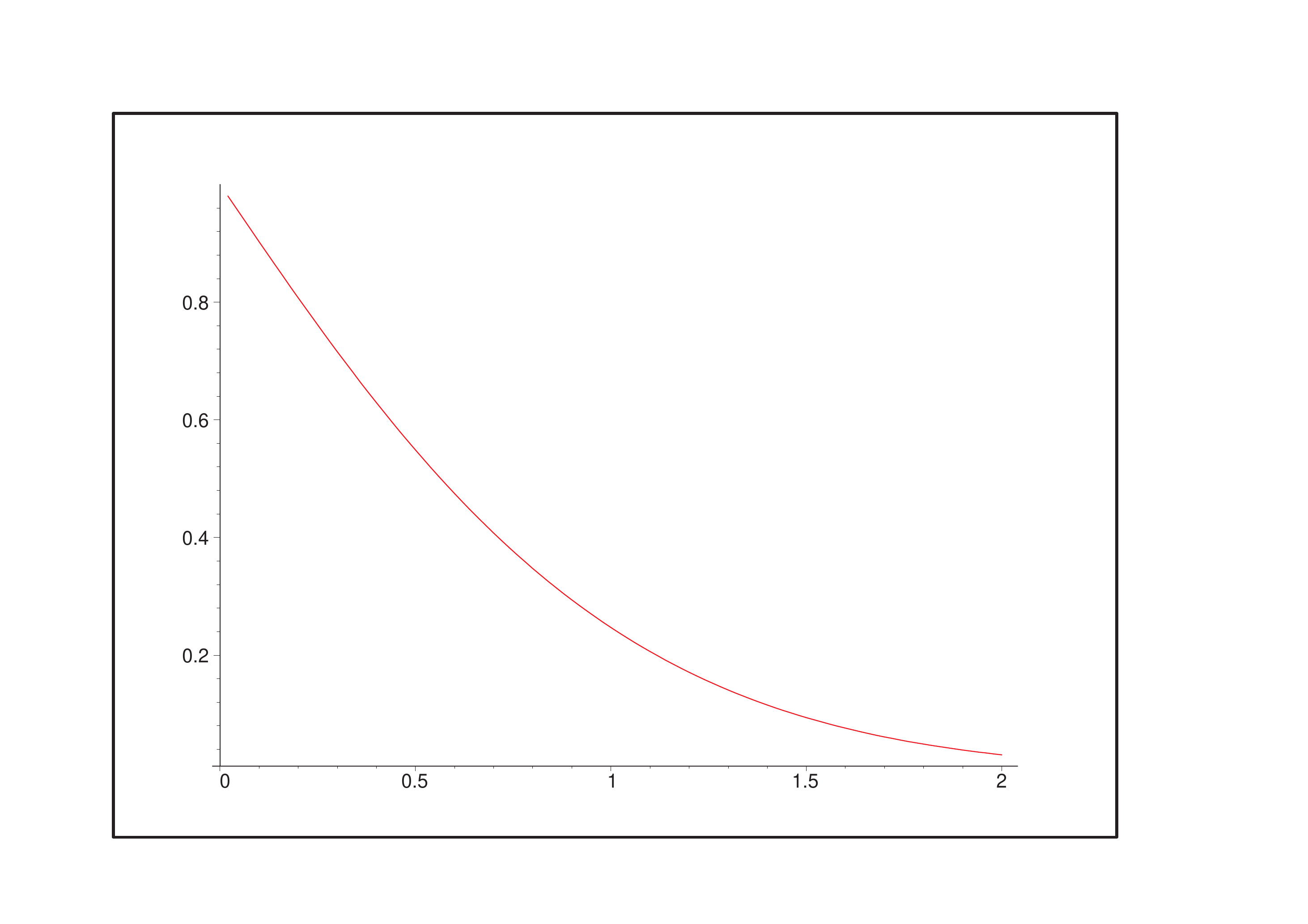}
	\caption{ $G(x)$}
	\label{FG}
\end{figure}

\begin{remark}\label{Rdiv}
  By  \eqref{Ai-} and \eqref{gl}, for any fixed $x>0$, 
$|\Ai(\glx_k+2\qqq x)|$ is usually of the order
$|\glx_k|\qqqqw\asymp k^{-1/6}$, and using also
\eqref{Ai'glk} and \eqref{Higlk}, the summands in \eqref{guy} are
(typically) of the order $k^{-1/6-1/6-2/3}=k\qw$, so this sum is
\emph{not} absolutely convergent. 
(For some values of $k$, the term may be smaller than
$k\qw$ because $\glx_k+2\qqq x$ may be close to
another zero, but such cases are infrequent and do not prevent the
series from being absolutely divergent.)

On the other hand, by \eqref{Hiz4}, $\pi\Hi(z)+ z\qw=O(|z|^{-4})$ on
the negative real axis, and it follows that the terms in
\eqref{guy1} are $O(k^{-3})$, so the series is absolutely convergent.
Moreover, 
since $\Ai$ is bounded on the real axis (see \eqref{Aioo} and \eqref{Ai-}),
the sum in \eqref{guy1} converges uniformly for $x\ge0$, and is thus a
continuous function of $x$; this is no surprise since we already have
remarked that $N$ has an absolutely continuous distribution, so $G$ is
continuous. Note also that \eqref{guy1} for $x=0$ is the trivial
$G(0)=1$, since each $\Ai(a_k)=0$, while \eqref{guy} does not hold for $x=0$.
\end{remark}

The sum in \eqref{guy1} can be differentiated termwise and we have
the following result, proved in \refS{SpfT1}.

\begin{theorem}  \label{T1d}
  $N$ and $M$ have absolutely continuous distributions with infinitely
  differentiable density functions, for $x>0$,
  \begin{align}
	\label{guy1'}
f_N(x)&=-2\qqq
\frac{\Ai'(2\qqq x)}{\Ai(0)}
-2\qqq \sumki
\frac{ \pi\Hi(\glx_k)+\glx_k\qw}
{\Ai'(\glx_k)} 
{\Ai'(\glx_k+2\qqq x)},
\\	\label{fmx}
f_M(x)&=2(1-G(x))f_N(x).
  \end{align}
\end{theorem}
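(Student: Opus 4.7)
The plan is to prove \refT{T1d} by extending the series \eqref{guy1} to a complex neighbourhood of $(0,\infty)$ so that $G$ is holomorphic (hence $C^\infty$) there, differentiating termwise to obtain \eqref{guy1'} via $f_N=-G'$ (since $G=1-F_N$), and deducing \eqref{fmx} from $F_M=(1-G)^2$ by the chain rule.

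First I would fix $x_0>0$ and a complex disk $B=B(x_0,r)$ with $r<x_0$. For $x\in B$ write $z_k(x)=\glx_k+2\qqq x$; for large $k$ the point $z_k(x)$ satisfies $|z_k(x)|\asymp|\glx_k|$ and lies in an arbitrarily thin sector around the negative real axis, while for the finitely many small $k$, containment in a fixed proper subsector of $|\arg z-\pi|<\pi/3$ is arranged by shrinking $r$. On such a subsector the standard Airy large-argument asymptotic gives $|\Ai(z_k(x))|=O(|\glx_k|^{-1/4})$ uniformly in $x\in B$; combined with $|\pi\Hi(\glx_k)+\glx_k\qw|=O(|\glx_k|^{-4})$ from \eqref{Hiz4} and $|\Ai'(\glx_k)|\asymp|\glx_k|^{1/4}$ from \eqref{Ai'glk}, the $k$-th term of \eqref{guy1} is $O(k^{-3})$ uniformly in $x\in B$. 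The series therefore converges uniformly on $B$, and by Weierstrass's theorem its limit $G$ is holomorphic on $B$; since $x_0>0$ was arbitrary, $G\in C^\infty(0,\infty)$.

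The uniform convergence on $B$ also legitimises termwise differentiation: using $\frac{d}{dx}\Ai(\glx_k+2\qqq x)=2\qqq\Ai'(\glx_k+2\qqq x)$ and $f_N=-G'$ one obtains precisely \eqref{guy1'}, and the same sector-asymptotic yields $|\Ai'(z_k(x))|=O(|\glx_k|^{1/4})$, so the series in \eqref{guy1'} is absolutely and uniformly convergent on compacts of $(0,\infty)$ with $k$-th term $O(k^{-8/3})$. Differentiating $F_M=(1-G)^2$ from \eqref{Fm} then gives $f_M=F_M'=2(1-G)(-G')=2(1-G)f_N$, which is \eqref{fmx}. The main technical step is the uniform Airy bound on $B$: one must keep $z_k(x)$ inside a proper subsector of $|\arg z-\pi|<\pi/3$, since at the critical angle the oscillating cosine in the asymptotic degenerates into an exponentially growing hyperbolic cosine. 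This is automatic for $k$ large and arranged for small $k$ by choosing $r$ small; once it is in hand, the rest is Weierstrass plus the estimates already assembled in \refR{Rdiv}.
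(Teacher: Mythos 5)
The argument breaks at the claim that $|\Ai(z_k(x))|=O(|\glx_k|^{-1/4})$ uniformly for $x$ in a complex disk $B=B(x_0,r)$. Being in an arbitrarily thin sector around the negative real axis is not enough: the correct uniform estimate in such a sector, read off from \eqref{Ai-z} or \eqref{la2ii}, is $|\Ai(z)|\asymp|z|^{-1/4}\exp\bigl(|\Im\tfrac23(-z)^{3/2}|\bigr)$. The sine factor in the oscillatory asymptotic is bounded only on the real axis; off it, it behaves like $e^{|\Im\frac23 (-z)^{3/2}|}$. For $z=z_k(x)=\glx_k+2^{1/3}x$ with $\Im x=y\neq 0$ fixed, one has $\Im\tfrac23(-z)^{3/2}\approx -2^{1/3}y\,|\glx_k|^{1/2}\to\mp\infty$, so $|\Ai(z_k(x))|\asymp|\glx_k|^{-1/4}e^{2^{1/3}|y|\,|\glx_k|^{1/2}}$. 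The $k$-th term of \eqref{guy1} is then of order $|\glx_k|^{-9/2}e^{2^{1/3}|y|\,|\glx_k|^{1/2}}\asymp k^{-3}e^{c|y|k^{1/3}}$, which diverges for every $y\neq0$. Hence the series \eqref{guy1} does not converge at any non-real $x$, there is no holomorphic extension via this series, and the Weierstrass step has nothing to apply to. Shrinking $r$ does not help, since the growth rate is $|\Im x|\cdot|\glx_k|^{1/2}$, which is unbounded in $k$ for any fixed $\Im x\neq0$. (The function $G$ does in fact extend holomorphically, but via the integral representation \eqref{lj2}, not via the series.)

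Your overall plan is salvageable if you stay on the real axis. For real $x$ in a compact of $(0,\infty)$ and $k$ large, $\glx_k+2^{1/3}x<0$, so \eqref{Ai'-} and \eqref{Ai'glk} give $|\Ai'(\glx_k+2^{1/3}x)|=O(|\glx_k|^{1/4})=O(|\Ai'(\glx_k)|)$, making the terms of \eqref{guy1'} $O(|\glx_k|^{-4})=O(k^{-8/3})$; the differentiated series is therefore absolutely and uniformly convergent on real compacts. Combined with the convergence of \eqref{guy1} at a point, the usual theorem on termwise differentiation of real series gives $G'$ and hence \eqref{guy1'} with $f_N=-G'$; iterating with $\Ai''(z)=z\Ai(z)$ handles higher derivatives, and \eqref{fmx} then follows from $F_M=(1-G)^2$ as you say. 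This is essentially the paper's ``alternative'' argument. The paper's primary route is different: it differentiates \eqref{lj2} under the integral sign, using $\Ai^{(m)}(z)/\Ai(z)=O(|z|^{m/2})$ together with \refL{LA} and \eqref{Hiz} to dominate the $m$-th derivative uniformly for $x$ in real intervals; that gives $C^\infty$ in one stroke and the integral formula \eqref{lj3}, which is then converted to \eqref{guy1'} by residues.
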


Integral formulas for $f_N(x)$ will be given in \eqref{lj3} and \eqref{fny}.
The density functions  $f_N(x)$ and $f_M(x)$ are plotted
in Figures \ref{FfN} and~\ref{FfM}.

\begin{figure}[htbp]
	\centering
		\includegraphics[width=0.8\textwidth,angle=0]{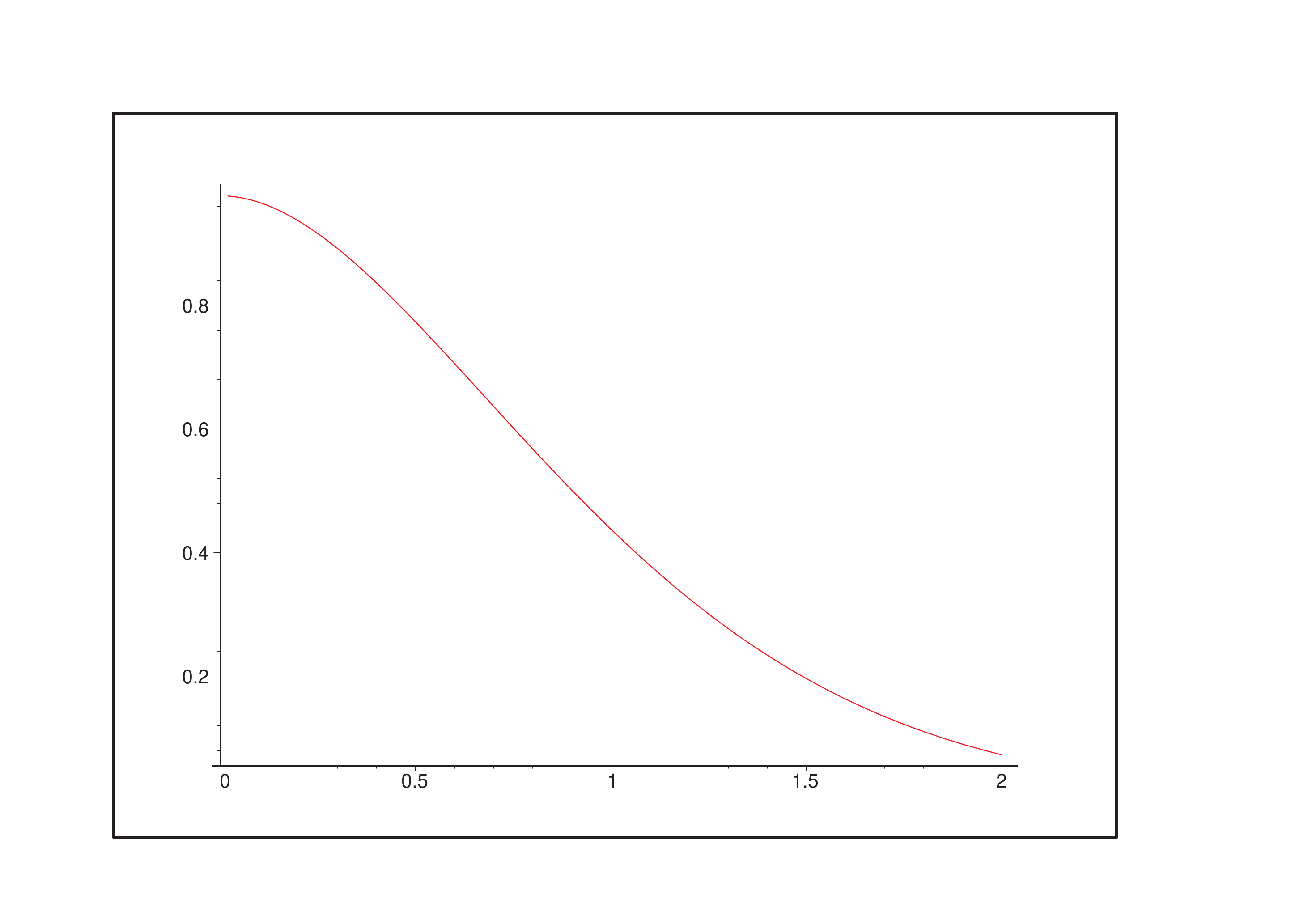}
	\caption{ $f_N(x)$}
	\label{FfN}
\end{figure}

\begin{figure}[htbp]
	\centering
		\includegraphics[width=0.8\textwidth,angle=0]{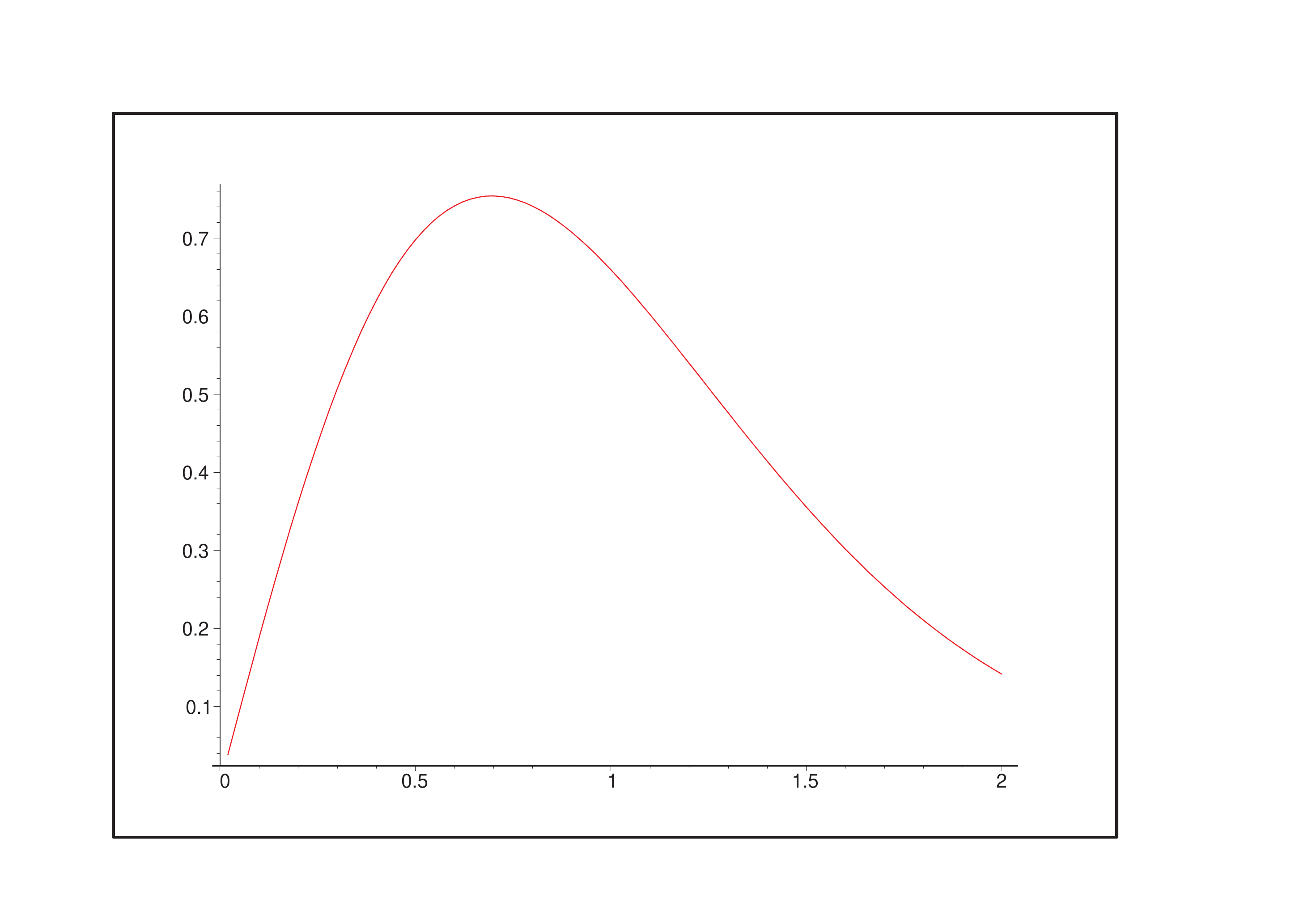}
	\caption{ $f_M(x)$}
	\label{FfM}
\end{figure}

\begin{remark}\label{Riv2}
In contrast,  the sum 
\begin{equation}\label{bad}
  2\qqq\pi
\sumki 
\frac{\Hi(\glx_k)}
{\Ai'(\glx_k)} 
{\Ai'(\glx_k+2\qqq x)}
\end{equation}
obtained by termwise differentiation of \eqref{guy} is \emph{not}
convergent for any $x\ge0$, as will be seen in \refS{SpfT1}.
\end{remark}

Moments of $M$ and $N$ now can be obtained from \eqref{enp} and
\eqref{emp} by integrating \eqref{guy1} termwise. This yields the
following result; see \refS{Smom} for proofs as well as related
integral formulas.
For higher moments, see \refR{Rp2}.

We define for convenience
\begin{equation}\label{Fik}
  \Fi(k)\=\pi\Hi(a_k)+a_k\qw.
\end{equation}
By \eqref{Hiz4}, $\Fi(k)=O(|a_k|^{-4})=O(k^{-8/3})$.

\begin{theorem}
  \label{TM}
The means and second moments of $M$ and $N$ are given by the
absolutely convergent sums
\begin{align}
  \E N 
&=\frac{1}{\dt 3 \Ai(0)}- \frac{\pi}{\dt} \sumki \Fi(k)\Gi(a_k) 
\label{en1}
\\
&=\frac{1}{\dt 3 \Ai(0)}-\frac{\pi}{\dt} 
  \sumki [\Fi(k)\Bi(a_k)-\Fi(k)\Hi(a_k)] ,
\label{en2}
\\
\E M 
&=   \frac{2\qqqb}{3 \Ai(0)} 
-\frac{\Ai'(0)^2}{\dt \Ai(0)^2} 
- \frac{1}{\dt} \sumki \lb 2\pi\Fi(k)\Bi(a_k)-\Fi(k)^2\rb
,
\label{E8}
\\
\E N^2 
&
=-\frac{\dt\Ai'(0)}{\Ai(0)}
+2\qqq\sumki\Fi(k)\bigsqpar{\pi a_k\Gi(a_k)-1}
\label{enn1}
\\&
=-\frac{\dt\Ai'(0)}{\Ai(0)}
+2\qqq\sumki\bigsqpar{\pi a_k\Fi(k)\Bi(a_k)-a_k\Fi(k)^2}
\label{enn2}
;
\\
\E M^2
&
=
-\frac{\dt5\,\Ai'(0)}{3\Ai(0)}
+2^{4/3}\sumki \Fi(k)
 \lb
\pi a_k\Bi(a_k)-\frac23a_k\Fi(k)+
\frac{2}{a_k^3}+\frac{2\Ai'(0)}{\Ai(0)a_k^2} \rb 
\non
\\
&\hskip8em
+2^{7/3}\sum_{k=1}^\II \sum_{j=1}^{k-1} 
 \frac{\Fi(k)\Fi(j)}{(a_k-a_j)^2}.
 \label{emm}
\end{align}
Numerically we have 
\begin{align*}
\E(N)&=0.69552 89995\ldots,
\\
\E (M) &=  0.99619 30199\ldots,
\\
\E(N^2)&=1.10279 82645\ldots,\\
\E (M^2) &=1.80329 57042\ldots,\\
\intertext{and thus}
\Var (N)&=\E(N^2)-\E(N)^2=0.61903 76754\ldots,\\
\Var (M)&=\E(M^2)-\E(M)^2=0.81089 51713\ldots.
\end{align*}
\end{theorem}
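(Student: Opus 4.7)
The plan is to substitute the uniformly and absolutely convergent series \eqref{guy1} for $G$ into the moment integrals \eqref{enp}--\eqref{emp}, reduce $M$-moments to $N$-moments via $\E M^p = 2\E N^p - p\int_0^\infty x^{p-1}G(x)^2\,dx$ from \eqref{Gm}, and integrate termwise. The $\Fi(k) = O(k^{-8/3})$ decay from Remark~\ref{Rdiv} together with the superexponential decay of $\Ai$ at $+\infty$ dominate everything uniformly in $x$; this justifies the interchange of summation and integration and guarantees absolute convergence of each resulting series. The remaining work is the evaluation of a few Airy integrals.

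For $\E N$ and $\E N^2$ the required primitives are $\int_{a_k}^\infty\Ai(u)\,du$ and $\int_{a_k}^\infty u\,\Ai(u)\,du$, together with the $k=0$ analogues $\int_0^\infty\Ai = 1/3$ and $\int_0^\infty u\,\Ai\,du = -\Ai'(0)$. The second is immediate from $u\,\Ai(u) = \Ai''(u)$; the first follows from the standard Scorer representation $\Gi(z) = \Ai(z)\int_0^z\Bi(t)\,dt + \Bi(z)\int_z^\infty\Ai(t)\,dt$, which at $z = a_k$ collapses to $\int_{a_k}^\infty\Ai = \Gi(a_k)/\Bi(a_k)$. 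The Wronskian value $\Ai'(a_k)\Bi(a_k) = -1/\pi$ then cancels $\Bi(a_k)$ against the $\Ai'(a_k)$ in the $k$-th coefficient of \eqref{guy1}, producing a factor of $-\pi$ and yielding \eqref{en1} and \eqref{enn1}; the alternative forms \eqref{en2}, \eqref{enn2} come from writing $\Hi = \Bi - \Gi$ and splitting $\Fi(k) = \pi\Hi(a_k) + a_k^{-1}$.

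For $\E M$ we need $\int_0^\infty G^2\,dx$. Squaring \eqref{guy1} produces three kinds of terms, handled by the two identities
\[
\int_z^\infty\Ai(t)^2\,dt = \Ai'(z)^2 - z\Ai(z)^2,
\qquad
\int_0^\infty\Ai(u+a)\Ai(u+b)\,du = \frac{\Ai(a)\Ai'(b) - \Ai'(a)\Ai(b)}{a-b}.
\]
The first, at $z = 0$ and $z = a_k$, produces the $\Ai'(0)^2$ and $\Fi(k)^2$ contributions. The second, at $(a,b) = (a_k,a_j)$ with $k \neq j$, vanishes because $\Ai(a_k) = \Ai(a_j) = 0$, so no double sum appears; at $(a,b) = (0,a_k)$ it supplies the cross term between $\Ai(\dt x)/\Ai(0)$ and the rest of the series, contributing the $\Fi(k)/a_k$ pieces. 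Assembling and using $\Bi = \Gi + \Hi$ yields \eqref{E8}.

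The main obstacle is $\E M^2$, where the $x$-weighted off-diagonal integrals $I_{kj} \= \int_0^\infty u\,\Ai(u+a_k)\Ai(u+a_j)\,du$ no longer vanish and produce the double sum in \eqref{emm}. My plan is to compute $I_{kj}$ by differentiating the second identity above with respect to $a$, which gives a closed form for $\int_0^\infty\Ai'(u+a)\Ai(u+b)\,du$; at $(a,b) = (a_k,a_j)$ this closed form collapses, using $\Ai(a_k) = \Ai(a_j) = 0$ and $\Ai''(a_k) = a_k\Ai(a_k) = 0$, to $L_{kj} \= \Ai'(a_k)\Ai'(a_j)/(a_k - a_j)$. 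The Wronskian-type function $W(u) \= \Ai'(u+a_k)\Ai(u+a_j) - \Ai(u+a_k)\Ai'(u+a_j)$ then satisfies $\int_0^\infty W\,du = 2L_{kj}$ (the two halves being negatives of each other by one integration by parts) and $W'(u) = (a_k - a_j)\Ai(u+a_k)\Ai(u+a_j)$; a further integration by parts yields $(a_k-a_j)I_{kj} = [uW]_0^\infty - \int_0^\infty W\,du = -2L_{kj}$, so $I_{kj} = -2\Ai'(a_k)\Ai'(a_j)/(a_k-a_j)^2$. Rescaling $u = \dt x$, pairing with the coefficient $\Fi(k)\Fi(j)/(\Ai'(a_k)\Ai'(a_j))$ from $G^2$, summing over $k \neq j$ and collecting the $-2$ in $\E M^2 = 2\E N^2 - 2\int_0^\infty x G^2\,dx$ produces exactly the $2^{7/3}\sum_{k>j}\Fi(k)\Fi(j)/(a_k-a_j)^2$ of \eqref{emm}; the single-sum part of \eqref{emm} comes from the diagonal and first-term contributions via the identities already used for $\E N^2$ and $\E M$. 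The numerical values then follow by truncating each (rapidly convergent) series and inserting tabulated values of $a_k$, $\Ai'(a_k)$, $\Bi(a_k)$, $\Hi(a_k)$, $\Ai(0)$, $\Ai'(0)$.
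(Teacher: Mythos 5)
Your plan is essentially the paper's proof: expand $G(2^{-1/3}x)=\sum_k\gfc(k)\Ai(x+a_k)$ with $a_0=0$ included, reduce $M$-moments via $\E M^p=2\E N^p-p\int x^{p-1}G^2$, and integrate termwise using closed-form Airy integrals; the only differences are cosmetic (you recover $\AI(a_k)=\Gi(a_k)/\Bi(a_k)$ from the Scorer representation plus the Wronskian, and you rederive $\int_0^\infty u\Ai(u+a_k)\Ai(u+a_j)\,du=-2\Ai'(a_k)\Ai'(a_j)/(a_k-a_j)^2$ directly, whereas the paper cites the formulas assembled in \refApp{AiryB}). One small caveat: your justification for the interchange ("superexponential decay of $\Ai$ at $+\infty$ dominates uniformly in $x$") understates the issue, since $\Ai(x+a_k)$ is only $O(|a_k|^{-1/4})$ on the growing interval $0\le x\le|a_k|$ and so its $L^1((0,\infty),x^{p-1}\dd x)$ norm grows like a power of $k$; what actually saves the day is \refL{Llp} together with $\gfc(k)=O(k^{-17/6})$ from \eqref{17}, which gives absolute convergence in the relevant weighted $L^1$ and $L^2$ spaces.
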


The numerical value for $\E M$ agrees with the one in \eqref{em0}.

Further formulas for moments are given in Theorems \refand{TJ1}{Tmean}.

\section{Distributions}\label{SpfT1}

Salminen \cite[Example 3.2]{Salminen} studied the hitting time
\begin{equation}\label{tau}
\tau\=\inf\set{t\ge 0:x+W(t)=-\gb t^2},  
\end{equation}
and
gave the formula \cite[(3.10)]{Salminen}, for $x,\gb>0$,
(with $\ga=-\gb$ in his notation),\begin{equation}\label{sal}
  f_\tau(t)=2\qqq \gb\qqqb
\sumki \exp\bigpar{2\qqq\gb\qqqb\glx_k t-\tfrac23\gb^2t^3}
\frac{\Ai(\glx_k+2\qqqb\gb\qqq x)}{\Ai'(\glx_k)}
\end{equation}
for the density function of $\tau$.
Note that $\tau$
is a defect random variable, and that $\tau=\infty$ if and
only if $\min_{t\ge0}(x+W(t)+\gb t^2)>0$. By
symmetry, $W\eqd -W$, and thus
\begin{equation*}
  \P(\tau=\infty)=\P\bigpar{\max_{t\ge0}(W(t)-\gb t^2-x<0)}
=\P(N_\gb<x).
\end{equation*}
Hence, choosing $\gb=1/2$,
\begin{equation}\label{sal2}
  \begin{split}
  G(x)&=
1-F_N(x)=\P(N\ge  x)=\P(\tau<\infty)=\intoo f_\tau(t)\dd t
\\&
=\intoo
2\qqqw
\sumki 
\frac{\Ai(\glx_k+2\qqq x)}{\Ai'(\glx_k)} 
\exp\bigpar{2\qqqw\glx_k t-\tfrac16t^3}
\dd t	
\\&
=\intoo
\sumki 
\frac{\Ai(\glx_k+2\qqq x)}{\Ai'(\glx_k)} 
\exp\bigpar{\glx_k t-\tfrac13t^3}
\dd t	.
  \end{split}
\end{equation}
If we formally integrate termwise we obtain \eqref{guy},
from \eqref{Hi}.
However, as seen in \refR{Rdiv}, the sum is not absolutely convergent,
so we cannot use \eg\ Fubini's theorem, and we have to justify the
termwise integration by a more complicated argument.

\begin{remark}
We have $|\glx_k|=\Theta(k\qqqb)$ by \eqref{gl}, 
and $|\Ai'(\glx_k)|=\Theta(k^{1/6})$ by \eqref{Ai'glk}; 
further, for fixed $x>0$,
$\Ai(\glx_k+2\qqq x)=O(|\glx_k|\qqqqw)=O(k^{-1/6})$ by \eqref{Ai-}.
Hence the sums in \eqref{sal} and \eqref{sal2} converge rapidly for each
fixed $t$, because of the negative term $\glx_k t$ in
the exponent. But the convergence rate is small for small $t$,
and when integrating we have the problem just described.   
\end{remark}

We begin by converting the sum in \eqref{sal} into a residue integral.
Fix $\gth_0\in(0,\pi/2)$ and $x_0\in(a_1,0)$, and let
$\gG=\gG(\gth_0,x_0)$ be the contour consisting of the ray
\set{re^{\ii(\pi+\gth_0)}} for $r$ from $\infty$ to $r_0\=|x_0|/\cos\gth_0$,
the line segment \set{x_0+\ii y} for
$y\in[-r_0\sin\gth_0,r_0\sin\gth_0]$
and the ray
\set{re^{\ii(\pi-\gth_0)}} for $r\in(r_0,\infty)$.

For (large) integers $N\in\bbN$, let $R_N\=(\frac32\pi N)\qqqb$, and
let $\gG_N\=\gG_N(\gth_0,x_0)$ be the closed contour obtained from
$\gG$ by cutting the infinite rays at $r=R_N$ and connecting them by
the arc $\gG_N'\=\set{R_Ne^{\ii\gth}:\gth\in[\pi-\gth_0,\pi+\gth_0]}$.

Note that by \eqref{gl}, $|\glx_N|<R_N<|\glx_{N+1}|$ (at least for
large $N$; in fact for all $N\ge1$). Thus, $\gG_N$ goes around the $N$
first zeros of $\Ai$; moreover, $\gG_N$ does not come too close to any
of the zeros; this is made more precise by the estimates in \refL{LA2}
and \refL{LA3}.

\begin{lemma}
  \label{LJ1}
Let $\tau=\tau_x$ be the hitting time \eqref{tau} for $\gb=1/2$ and
some $x>0$.
Then the defect random variable $\tau$ has the density function, for
$t>0$,
\begin{equation}
  \label{lj1}
f_\tau(t)
=\frac{1}{2\pi\ii}\intg 2\qqqw e^{2\qqqw zt-t^3/6}
\frac{\Ai(z+2\qqq x)}{\Ai(z)} \dd z.
\end{equation}
\end{lemma}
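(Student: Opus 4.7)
The plan is to derive \eqref{lj1} from Salminen's series \eqref{sal} (specialized to $\gb=1/2$) by recognizing each summand as a residue and invoking the residue theorem. Introduce the meromorphic function
\begin{equation*}
F(z)\=2\qqqw e^{2\qqqw zt-t^3/6}\frac{\Ai(z+2\qqq x)}{\Ai(z)}.
\end{equation*}
Since $\Ai$ is entire with simple zeros precisely at $\glx_1,\glx_2,\dots$, $F$ is meromorphic with simple poles at the $\glx_k$, and
\begin{equation*}
\Res_{z=\glx_k}F(z)=2\qqqw e^{2\qqqw \glx_k t-t^3/6}\frac{\Ai(\glx_k+2\qqq x)}{\Ai'(\glx_k)},
\end{equation*}
which is exactly the $k$-th term of \eqref{sal} with $\gb=1/2$ (so that $2\qqq\gb\qqqb=2\qqqw$, $\tfrac23\gb^2=\tfrac16$, and $2\qqqb\gb\qqq=2\qqq$).

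The main step is to apply the residue theorem on the closed contour $\gG_N$, which by its construction encloses exactly the first $N$ poles $\glx_1,\dots,\glx_N$. Orienting it so that its ray portion agrees with $\gG$, this gives
\begin{equation*}
\frac{1}{2\pi\ii}\int_{\gG\cap\set{\zz\le R_N}}F(z)\dd z+\frac{1}{2\pi\ii}\intggn F(z)\dd z=\sum_{k=1}^N\Res_{z=\glx_k}F(z).
\end{equation*}
As $\Ntoo$, the right-hand side converges to $f_\tau(t)$ by \eqref{sal} (which, for fixed $t>0$, is absolutely and rapidly convergent because of the factor $e^{2\qqqw \glx_k t}$ with $\glx_k\to-\infty$). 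It therefore suffices to prove (a) that $\intg F(z)\dd z$ converges absolutely, so the truncated integral tends to $\intg F(z)\dd z$, and (b) that the arc integral $\intggn F(z)\dd z$ tends to $0$.

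Both statements reduce to Airy-function estimates. For (a), on a ray $\set{re^{\ii(\pi\pm\gth_0)}}$ with $0<\gth_0<\pi/2$, the standard asymptotic $\Ai(z)\sim(2\srpi)\qw z^{-1/4}e^{-\tfrac23 z\qqc}$ in the cut plane shows that $\Ai(z+2\qqq x)/\Ai(z)$ is bounded, whereas $|e^{2\qqqw zt}|=e^{-2\qqqw rt\cos\gth_0}$ decays exponentially in $r$, yielding absolute convergence. For (b), the radius $R_N=(\tfrac32\pi N)\qqqb$ is chosen, via \eqref{gl}, so that $\gGG_N$ lies roughly midway between consecutive zeros of $\Ai$; the quantitative estimates in \refL{LA2} and \refL{LA3} then give a uniform lower bound on $|\Ai(z)|$ along $\gGG_N$ matched by an upper bound on $|\Ai(z+2\qqq x)|$, so the ratio grows only polynomially in $R_N$. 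Combined with $|e^{2\qqqw zt}|\le e^{-2\qqqw R_N t\cos\gth_0}$ and the $O(R_N)$ arc length, this forces the arc integral to $0$.

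The main obstacle is (b): since $\Ai$ oscillates along the negative real axis with neither exponential growth nor decay, a uniform lower bound for $|\Ai(z)|$ on $\gGG_N$ demands delicate control of the distance from $\gGG_N$ to the zeros of $\Ai$. This is exactly the reason for the careful choice $R_N=(\tfrac32\pi N)\qqqb$ and the Airy estimates assembled in \refL{LA2} and \refL{LA3}, which are the technical backbone of the argument.
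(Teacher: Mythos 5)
Your proposal is correct and follows the same route as the paper: write the Salminen series \eqref{sal} (with $\gb=1/2$) as a sum of residues of the meromorphic function $\Phi(z)=2\qqqw e^{2\qqqw zt-t^3/6}\Ai(z+2\qqq x)/\Ai(z)$ at the Airy zeros, apply the residue theorem on the closed contour $\gG_N$, and pass to the limit using the estimates in \refL{LA} (absolute convergence of $\intg$) and \refL{LA2} (control of the ratio $\Ai(z+2\qqq x)/\Ai(z)$ on the arc $\gG'_N$, whose radius $R_N=(\tfrac32\pi N)\qqqb$ is chosen to stay away from the zeros). The paper's Lemma~\ref{LA2} in fact gives the slightly sharper bound $\Ai(z+2\qqq x)/\Ai(z)=O(1)$ on the arc rather than merely polynomial growth, but your weaker estimate suffices given the exponential decay of $|e^{2\qqqw zt}|$ there.
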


\begin{proof}
  For $x>0$ and $z\in\gG$ or $z\in\gG_N$, 
Lemmas \ref{LA} and \ref{LA2} yield
$\Ai(z+2\qqq x)/\Ai(z)=O(1)$,
and thus the integrand in
  \eqref{lj1}, $\Phi(z)$ say, is bounded by
  \begin{equation*}    
\Phi(z)=O\bigpar{\exp\bigpar{-2\qqqw t|\Re z|-t^3/6}}
=O\bigpar{\exp\bigpar{-2\qqqw t|\Re z|}}.
  \end{equation*}
This shows both that $\intg\Phi(z)\dd z$ is absolutely convergent, 
and that
\begin{equation}\label{hag2}
  \intgn\Phi(z)\dd z-\intg\Phi(z)\dd z\to0 \quad\text{as \Ntoo}.
\end{equation}

$\Phi(z)$ has simple poles at the zeros $\glx_k$ of $\Ai$, and
evaluating $\intgn\Phi(z)\dd z$ by residues, we see that it equals the
partial sum of the $N$ first terms of \eqref{sal}. Consequently,
\eqref{sal} yields
$  \intgn\Phi(z)\dd z\to f_\tau(t)$ as \Ntoo, and the result follows by
\eqref{hag2}. 
\end{proof}

\begin{remark}
  \label{RFT}
The contour $\gG$ in \eqref{lj1} can be deformed to the imaginary
axis, using the estimate in \refL{LA}. Hence, setting $z=2\qqq s\ii$,
\begin{equation}
  \label{ft}
e^{t^3/6} f_\tau(t)
=\frac1{2\pi}\intoooo e^{\ii st} \frac{\Ai(2\qqq(s\ii+x))}{\Ai(2\qqq s\ii)}
\dd s.
\end{equation}
Moreover, this holds also for $t\le0$, with $f_\tau(t)=0$, since the
\rhs{} of \eqref{ft} then easily is shown to vanish: writing it again as
a line integral along the imaginary axis we can move the line of
integration to $\Re z=\gs$, for any $\gs>a_1$, and for $t\ge0$ we may 
let $\gs\to+\infty$, again using \refL{LA}.

This exhibits $e^{t^3/6} f_\tau(t)$ as the inverse Fourier transform
of $s\mapsto \xfrac{\Ai(2\qqq( s\ii+x))}{\Ai(2\qqq s\ii)}$.
By \refL{LA}, this function is integrable and in $L^2$, and using
\eqref{Aioo} and \eqref{Aiioo}, it is seen that so is its derivative,
which implies that the Fourier transform $e^{t^3/6} f_\tau(t)$ is
integrable. The Fourier inversion formula yields
\begin{equation}  \label{ft2}
\intoooo e^{t^3/6} f_\tau(t) e^{-\ii st} \dd t
= \frac{\Ai(2\qqq(s\ii+x))}{\Ai(2\qqq s\ii)},
\qquad -\infty <s<\infty,
\end{equation}
and, more generally, by analytic continuation,
\begin{equation}\label{lt}
\intoooo e^{t^3/6} f_\tau(t) e^{-zt} \dd t
= \frac{\Ai(2\qqq(z+x))}{\Ai(2\qqq z)},
\qquad \Re z \ge0
\end{equation}
(and, in fact, for $\Re z > a_1$).
This formula for the Laplace transform is (in a more general version)
given by \citetq{Theorem 2.1}{Groeneboom}, where
$e^{t^3/6} f_\tau(t)$ is denoted $h_{1/2,x}(t)$; see also \eqref{v2} below.
Conversely, this formula from \cite{Groeneboom} yields by Fourier
inversion \eqref{ft} and \eqref{lj1}, so we could have used it instead
of \eqref{sal} from \cite{Salminen} as our starting
point. 
We give an alternative proof of \eqref{lt} in \refApp{Aalt}, which
thus gives us a self-contained proof of \refL{LJ1}.
(\citet{Groeneboom}, \citet{Salminen} and our \refApp{Aalt}
use similar methods. See
also \refApp{AD} for another approach.)
\end{remark}

\begin{remark}
  For our purposes we consider only $x>0$ in \eqref{tau}. For $x<0$, 
  the hitting time is \as{} finite; its distribution is found in \citet{ML98}.
\end{remark}

\begin{lemma}
  \label{LJ2}
For $x>0$,
\begin{equation}
  \label{lj2}
G(x)
=\frac{1}{2\ii}\intg
\frac{\Hi(z)\Ai(z+2\qqq x)}{\Ai(z)} \dd z.
\end{equation}
\end{lemma}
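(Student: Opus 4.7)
The starting point is $G(x)=\intoo f_\tau(t)\,\dd t$ from \eqref{sal2}, together with the contour-integral representation \eqref{lj1} for $f_\tau(t)$. The strategy is to interchange the two integrations, compute the resulting inner $t$-integral explicitly, and identify it as $\pi\Hi(z)$. After the substitution $u=2\qqqw t$, the inner integral becomes
\[
\intoo 2\qqqw e^{2\qqqw zt-t^3/6}\,\dd t=\intoo e^{zu-u^3/3}\,\dd u=\pi\Hi(z),
\]
which is the defining integral representation of the Scorer function $\Hi$ recorded in \refApp{AiryA}. Substituting this back produces exactly the right-hand side of \eqref{lj2}.

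\textbf{Interchange on compact intervals.}
The main obstacle is justifying the exchange of integrals, since the double integral is not obviously absolutely convergent on $[0,\infty)\times\gG$. My plan is to truncate in $t$ and then pass to the limit. For each finite $T$, the double integral is absolutely convergent on $[0,T]\times\gG$: the factor $e^{2\qqqw \Re z\,t}$ with $\Re z<0$ on $\gG$ provides uniform exponential decay along the rays for $t\in[0,T]$, while $\Ai(z+2\qqq x)/\Ai(z)=O(1)$ by \refL{LA}. Hence Fubini legitimately gives
\[
\int_0^T f_\tau(t)\,\dd t
=\frac{1}{2\pi\ii}\intg \psi_T(z)\,\frac{\Ai(z+2\qqq x)}{\Ai(z)}\,\dd z,\quad \psi_T(z):=\int_0^T 2\qqqw e^{2\qqqw zt-t^3/6}\,\dd t.
\]

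\textbf{Passing to the limit.}
As $T\to\infty$, the left side tends to $G(x)$ by \eqref{sal2}, and $\psi_T(z)\to\pi\Hi(z)$ pointwise. To apply dominated convergence on $\gG$, I would first use $|\psi_T(z)|\le \pi\Hi(\Re z)$, which on the rays of $\gG$ is $O(|z|\qw)$ by the asymptotic $\pi\Hi(w)=-w\qw+O(|w|^{-4})$ from \eqref{Hiz4}. This alone is insufficient (it gives only $\int (1/r)\,\dd r$ along a ray), so I would combine it with the refined Airy asymptotic $\Ai(z+2\qqq x)/\Ai(z)\sim \exp\bigpar{-2\qqq x\,z^{1/2}}$; since $\Re z^{1/2}=|z|^{1/2}\sin(\gth_0/2)>0$ on the rays, this ratio decays exponentially in $|z|^{1/2}$. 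Together, the integrand on $\gG$ is uniformly dominated by an integrable function, and dominated convergence completes the proof of \eqref{lj2}.
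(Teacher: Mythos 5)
Your proof is correct and reaches the right answer, but you have taken a detour that the paper avoids. You state that ``the double integral is not obviously absolutely convergent on $[0,\infty)\times\gG$,'' but in fact it is, and the paper's proof uses this directly: by \refL{LA} the integrand in \eqref{lj1} is bounded on $\gG$ by $O\bigpar{e^{-cx|z|\qq}e^{-t^3/6}}$ (the factor $e^{2\qqqw\Re z\,t}\le1$ since $\Re z<0$ on $\gG$ and $t\ge0$), and this dominating function is jointly integrable over $[0,\infty)\times\gG$, so Fubini applies in one step and the change of variables $t=2\qqq t_1$ then gives \eqref{Hi} and hence \eqref{lj2}. Your truncation-plus-dominated-convergence argument achieves the same thing and is valid: the domination you construct, combining $|\psi_T(z)|\le\pi\Hi(\Re z)=O(|z|\qw)$ with the exponential decay $e^{-cx|z|\qq}$ of the Airy ratio from \refL{LA}, is an integrable majorant on $\gG$. (In fact the exponential factor alone already suffices; the extra $|z|\qw$ from $\Hi(\Re z)$ is not needed, so your remark that the $\Hi$ bound ``alone is insufficient'' while true is a red herring, since the Airy ratio does the work by itself.) The net difference is that you reprove a special case of Fubini via DCT where the paper invokes Fubini outright; both are sound, but you could simplify your argument by observing that the absolute-convergence hypothesis of Fubini is already met.
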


\begin{proof}
  We have $G(x)=\P(\tau<\infty)=\intoo f_\tau(t)\dd t$.
Now integrate \eqref{lj1} with respect to $t$ and interchange the
order of integration, which is allowed by Fubini's theorem and the
estimate 
\refL{LA}, which implies that for $z\in\gG$, the integrand in
\eqref{lj1} is bounded by $O\bigpar{\exp(-cx|z|\qq-t^3/6)}$.
The result follows by \eqref{Hi} (and a change of variables $t=2\qqq t_1$).
\end{proof}

The integral in \eqref{lj2} is absolutely convergent and converges
rapidly for any fixed $x>0$ by 
\refL{LA} and \eqref{Hiz}.
We denote the integrand in \eqref{lj2} by $\Psi(z)=\Psi(z;x)$.

\begin{proof}[Proof of \refT{T1}]
Fix $x>0$.
By \eqref{Hiz} and Lemmas \refand{LA}{LA2}, for $z=R_Ne^{\ii(\gf+\pi)}$ with
$|\gf|\le\frac\pi2$, 
$\Psi(z)=O\bigpar{|z|\qw\exp\bigpar{-x|z|\qq|\gf|/\pi}}$.
It follows that $\intgn\Psi(z)\dd z-\intg\Psi(z)\dd z\to0$ as \Ntoo,
and thus $G(z)=\frac{1}{2\ii}\lim_{N\to\infty}\intgn\Psi(z)\dd z$.
Evaluating $\intgn$ by residues, we obtain \eqref{guy}.  

To obtain \eqref{guy1}, we take out the first term in the expansion
\eqref{Hizk} of $\Hi(z)$, and write \eqref{lj2} as
\begin{equation*}
G(x)
=
\frac{1}{2\ii}\intg
\frac{(\Hi(z)+\pi\qw z\qw)\Ai(z+2\qqq x)}{\Ai(z)} \dd z
-\frac{1}{2\ii\pi}\intg
\frac{\Ai(z+2\qqq x)}{z\Ai(z)} \dd z.
\end{equation*}
The first integral can be converted to a sum of residues by the
argument just given for \eqref{lj2}, which yields the sum in
\eqref{guy1}. Indeed, we have better estimates now, and the resulting
sum is absolutely convergent as seen in \refR{Rdiv}.

For the second integral we instead close the contour on the right, by
a large circular arc \set{Re^{\ii t}} for $t$ from $\pi-\gth_0$ to 
$-(\pi-\gth_0)$; it follows by \refL{LA} that the error tends to 0 as
$R\to\infty$. Inside this closed contour, $\Ai$ has no zeros, so the
only pole is at $z=0$ where the residue is $\Ai(2\qqq x)/\Ai(0)$.
The result follows, noting that we go around this contour in the
negative direction.
\end{proof}

\begin{remark}\label{Rhigher}
  We may also use the expansion \eqref{Hizk} of $\Hi$ with more terms.
In general, subtracting the sum with $L$ terms in \eqref{Hizk} from $\Hi$
in \eqref{lj2}
yields an integral that can be converted to a sum of residues as
above; this sum is similar to the ones in \eqref{guy} and
\eqref{guy1}, and the terms are now of order $k^{-1-2L}$.
We also have the integral with the subtracted terms; this is a linear
combination of terms of the type $\intg z^{-n-1}\Ai(z+2\qqq x)/\Ai(z)$,
which as above equals $-2\pi\ii$ times the residue at 0, so this
integral can be written as a combination of derivatives of $\Ai$ at
$2\qqq x$ and 0; by the equation $\Ai''(z)=z\Ai(z)$, and successive
derivations of this equation, the result can be written as
$p_1(x)\Ai(2\qqq x)+p_2(x)\Ai'(2\qqq x)$ for some polynomials $p_1$
and $p_2$ (depending on $L$), whose coefficients are rational
functions in $\Ai(0)$ and $\Ai'(0)$.
We leave the details to the reader.
\end{remark}

\begin{proof}[Proof of \refT{T1d}]
If $|\arg z|<\pi-\gd$ and $|z|\ge1$, say, then by \refL{LA3},
$|\Ai'(z)/\Ai(z)|\asymp |z|\qq$. More generally, using also
$\Ai''(z)=z\Ai(z)$, and differentiating this equation further, 
by induction, $\Ai^{(m)}(z)/\Ai(z)=O(|z|^{m/2})$ for every fixed $m\ge0$. 

It follows by \refL{LA} and \eqref{Hiz} that 
for every fixed $m\ge0$, $x$ in a fixed interval $(x_0,x_1)$ with
$0<x_0<x_1$, and 
$z\in\gG$, 
\begin{equation*}
  \frac{\partial^m}{\partial x^m} \Psi(z;x) 
=
O\lrpar{\zz\qw|z+x|^{m/2}e^{-cx\zz\qq}}
=
O\lrpar{\zz^{m/2-1}e^{-cx_0\zz\qq}}.
\end{equation*}
Consequently, we can differentiate \eqref{lj2} under the integral sign
an arbitrary number of times; this shows that $G$ is infinitely
differentiable on $(0,\infty)$ and that $N$ has an infinitely
differentiable density $f_N=-G'$ given by
\begin{equation}
  \label{lj3}
f_N(x)=-G'(x)
=\frac{2\qqq\ii}{2}\intg
\frac{\Hi(z)\Ai'(z+2\qqq x)}{\Ai(z)} \dd z.
\end{equation}

This integral can be evaluated as a sum of residues as for
\eqref{guy1} in the proof of \refT{T1} above, by first adding $\pi\qw
z\qw$ to $\Hi(z)$, which yields \eqref{guy1'}.
Alternatively, and perhaps simpler, 
\eqref{Ai'-} and \eqref{Ai'glk} imply that, uniformly for $0\le x\le x_0$
for any fixed $x_0>0$, 
$\Ai'(a_k+2\qqq x)=O(|\glx_k|\qqqq)=O(\Ai'(\glx_k))$, and thus the
terms in the sum in \eqref{guy1'} are $O(|\pi\Hi(\glx_k)+\glx_k\qw|)
=O(|\glx_k|^{-4})=O(k^{-8/3})$. Hence we can integrate the sum in
\eqref{guy1'} termwise; equivalently, we can differentiate
\eqref{guy1} termwise, which yields \eqref{guy1'}.

The result for $M$ and \eqref{fmx} follow from $F_M(x)=F_N(x)^2=(1-G(x))^2$.
\end{proof}

To see that the sum \eqref{bad} does not converge for any $x>0$, let
$y\=2\qqq x$. Take $x=|a_k|-y$ in \eqref{Ai'-}. Since
then, by Taylor's formula and \eqref{gl},
\begin{equation*}
  \tfrac23(|a_k|-y)\qqc
=   \tfrac23|a_k|\qqc -y|a_k|\qq+o(1)
=  \frac{\pi(4k-1)}{4} -y(3\pi k/2)\qqq+o(1),
\end{equation*}
we obtain
\begin{equation*}
  \Ai'(a_k+y)=-\pi\qqw|a_k|\qqqq\bigpar{\cos(\pi k-y(3\pi
  k/2)\qqq+o(1))+o(1)}
\end{equation*}
and thus 
\begin{equation*}
\frac{  \Ai'(a_k+y)}{\Ai'(a_k)}=
\frac{\cos(\pi k-y(3\pi k/2)\qqq)+o(1)}{\cos(\pi k)+o(1)}
={\cos(y(3\pi k/2)\qqq)+o(1)}.
\end{equation*}
Let $I_n$ be the interval $[c(2\pi n)^3,c(2\pi n+1)^3]$, with 
$c\=2y^{-3}/(3\pi)$. For $k\in I_n$, we have 
$y(3\pi k/2)\qqq\in [2\pi n,2\pi n+1]$ and thus 
$\xfrac{  \Ai'(a_k+y)}{\Ai'(a_k)}\ge \cos 1+o(1)>0.5$, if $n$ is large.
Further, by \eqref{Hi-} and \eqref{gl}, 
$\Hi(a_k)\sim\pi\qw|a_k|\qw\sim\pi\qw(3\pi/2)\qqqbw k\qqqbw$.
Hence the term in \eqref{bad}, $t_k$ say, satifies, for some constants
$c_1,c_2>0$ and $k\in I_n$,
\begin{equation*}
  t_k \ge c_1 k\qqqbw \ge c_2 n\qww.
\end{equation*}
Since there are $\Theta(n^2)$ integers in $I_n$, the sum over them is
$\Theta(1)$, and thus the sum in \eqref{bad} diverges.
(The case $x=0$ is simple.)

\section{Moments}\label{Smom}

\begin{lemma}\label{Llp}
  For every fixed $p\ge1$,  uniformly in $a\ge0$,
  \begin{align}
\intoo x^{p-1} \abs{\Ai(x-a)}\dd x = O(a^{p-1/4}+1)\label{lem1},
\\
\intoo x^{p-1} \abs{\Ai(x-a)}^2\dd x = O(a^{p-1/2}+1)\label{lem2}.
  \end{align}
\end{lemma}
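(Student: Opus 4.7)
The plan is to use two standard estimates for the Airy function, both of which are collected in Appendix~\ref{AiryA} of this paper: from \eqref{Aioo} and \eqref{Ai-} we have
\[
|\Ai(t)|=O\bigl((1+|t|)^{-1/4}\bigr)\qquad\text{for all real }t,
\]
and in addition $|\Ai(t)|\le Ce^{-\frac{2}{3}t^{3/2}}$ for $t\ge 0$. For $a\le 1$ (say) both integrands are bounded by a rapidly decaying function of $x$ independent of $a$, so the bounds hold trivially. I will therefore assume $a\ge 1$ throughout.

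The main idea is to split the integral at $x=a$. For $x\in[0,a]$, the Airy function oscillates with polynomial decay, so by the uniform bound above
\[
\int_0^a x^{p-1}|\Ai(x-a)|^j\dd x
\;\le\; C\int_0^a x^{p-1}(1+a-x)^{-j/4}\dd x
\]
for $j=1,2$. Substituting $y=a-x$ and splitting at $y=a/2$ handles the two regimes: on $y\in[0,a/2]$ one has $(a-y)^{p-1}\le a^{p-1}$ (using $p\ge 1$), so integrating $(1+y)^{-j/4}$ gives $O(a^{p-1}\cdot a^{1-j/4})=O(a^{p-j/4})$; on $y\in[a/2,a]$ one uses $(1+y)^{-j/4}\le Ca^{-j/4}$ and integrates $(a-y)^{p-1}$, again getting $O(a^{p-j/4})$. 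This yields the desired bound $O(a^{p-1/4})$ for \eqref{lem1} and $O(a^{p-1/2})$ for \eqref{lem2}.

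For $x\ge a$, substitute $u=x-a$ and use the super-exponential decay:
\[
\int_a^\infty x^{p-1}|\Ai(x-a)|^j\dd x
\;\le\; C\int_0^\infty (a+u)^{p-1}e^{-c u^{3/2}}\dd u.
\]
Since $p\ge 1$, we have $(a+u)^{p-1}\le C_p(a^{p-1}+u^{p-1})$, so the tail integral is $O(a^{p-1})$, which is of smaller order than the bulk contribution in both cases. Combining the two pieces with the trivial bound for $a\le 1$ gives \eqref{lem1} and \eqref{lem2}.

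The only mildly delicate point is the $y\in[0,a/2]$ estimate, where one needs $p\ge 1$ so that $(a-y)^{p-1}$ is bounded by its value at $y=0$; the rest is routine splitting and integration.
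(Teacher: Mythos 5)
Your proof is correct and follows essentially the same strategy as the paper's: split the integral at $x=a$, use the $O((1+|t|)^{-1/4})$ decay of $\Ai$ on the negative axis for the bulk $[0,a]$ (bounding $x^{p-1}\le a^{p-1}$ via $p\ge1$), and use super-exponential decay for the tail $[a,\infty)$. The only difference is that your extra split at $y=a/2$ is unnecessary — the bound $(a-y)^{p-1}\le a^{p-1}$ holds on all of $[0,a]$, so one can integrate $(1+y)^{-j/4}$ over the whole range directly, which is what the paper does.
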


\begin{proof}
  For $0\le x< a$ we have
$x^{p-1}\le a^{p-1}$ and $\abs{\Ai(x-a)}=O((a-x)\qqqqw)$ by
  \eqref{Ai-},
so   
\begin{equation*}
 \int_0^a x^{p-1} \abs{\Ai(x-a)}\dd x  
=O\lrpar{a^{p-1}\int_0^a(a-x)\qqqqw\dd x}
=O\bigpar{a^{p-1} a^{3/4}},
\end{equation*}
and similarly 
$ \int_0^a x^{p-1} \abs{\Ai(x-a)}^2\dd x  
=O\bigpar{a^{p-1} a^{1/2}}$.
For larger $x$ we use the rapid decrease in \eqref{Aioo}, which implies
\begin{equation*}
  \int_a^\infty x^{p-1} \abs{\Ai(x-a)}\dd x 
= 
  \int_0^\infty (y+a)^{p-1} \abs{\Ai(y)}\dd y 
=O(1+a^{p-1}),
\end{equation*}
and similarly for $\int_a^\infty x^{p-1} \abs{\Ai(x-a)}^2\dd x $.
The result follows.
\end{proof}

\begin{proof}[Proof of \refT{TM}]
Write \eqref{guy1} as
\begin{equation}
  \label{gsum}
G(2\qqqw x) = \sumk \gfc(k)\Ai(x+a_k),
\end{equation}
where we for convenience define $a_0=0$ and 
\begin{align*}
  \gfc(0)&\=\frac1{\Ai(0)},
\\
\gfc(k)&\=\frac{\pi\Hi(a_k)+a_k\qw}{\Ai'(a_k)}
=\frac{\Fi(k)}{\Ai'(a_k)}
, \qquad k\ge1.
\end{align*}
By \eqref{Hiz4}, \eqref{Ai'glk} and \eqref{gl},
\begin{equation}\label{17}
  \abs{\gfc(k)}=O\bigpar{\abs{a_k}^{-4-1/4}}
=O\bigpar{k^{-17/6}},
\qquad k\ge1.
\end{equation}
By \eqref{lem1}, 
$\norm{\Ai(x+a_k)}_{L^1((0,\infty),\dd x)}=O(\abs{a_k}^{3/4})=O(k\qq)$, 
$k\ge1$, 
and thus the sum in \eqref{gsum} converges absolutely in
$L^1((0,\infty),\dd x)$, 
so it may be integrated termwise. 
Consequently, using  \eqref{enp} and \eqref{aAI},
\begin{equation*}
\E N 
= \intoo G(x)\dd x 
= 2\qqqw\intoo G(2\qqqw x)\dd x 
=2\qqqw\sumk\gfc(k)\AI(a_k).
\end{equation*}
We have $\AI(0)=\intoo\Ai(x)\dd x=1/3$ \cite[10.4.82]{AS}, see \eqref{aAI},
and, for $k\ge1$, $\AI(a_k)=-\pi\Ai'(a_k)\Gi(a_k)$ by \eqref{AIG} since
$\Ai(a_k)=0$. Thus \eqref{en1} follows, and so does \eqref{en2} by \eqref{GBH}.

\refL{Llp} and \eqref{17}
similarly imply that
\eqref{gsum} converges absolutely also in $L^2((0,\infty),\dd x)$.
Hence, the integral $\intoo G(x)^2\dd x$ can be obtained by termwise
integration in \eqref{gsum},
using \eqref{IAi2}, \eqref{IAi2k},
\eqref{IAi2abkl}, \eqref{Ioo0k}: 
\begin{align*}
\intoo G(x)^2\dd x 
&= 2\qqqw\intoo G(2\qqqw x)^2\dd x 
\\&
=2\qqqw\sumk\sumell\gfc(k)\gfc(\ell)\intoo\Ai(x+a_k)\Ai(x+a_\ell)\dd x
\\&
=2\qqqw\gfc(0)^2(\Ai'(0))^2
-2\qqqw2\sumki\gfc(0)\gfc(k)\frac{\Ai(0)\Ai'(a_k)}{a_k}
\\ &\hskip12em
+2\qqqw\sumki\gfc(k)^2 (\Ai'(a_k))^2
\\&
=2\qqqw\parfrac{\Ai'(0)}{\Ai(0)}^2
-2\qqqb\sumki\frac{\Fi(k)}{a_k}
+2\qqqw\sumki\Fi(k)^2.
\end{align*}

Thus, by \eqref{emp} and \eqref{en2}, 
\begin{align*}
  \E M&=2\intoo G(x)\dd x-\intoo G(x)^2\dd x=2\E N-\intoo G(x)^2\dd x
\\
&=
\frac{2\qqqb}{3 \Ai(0)}
-2\qqqw\parfrac{\Ai'(0)}{\Ai(0)}^2
-\frac{1}{\dt}   \sumki \Fi(k)\lb2\pi\Bi(a_k)-2\pi\Hi(a_k) 
-\frac{2}{a_k}+\Fi(k)\rb,
\end{align*}
and \eqref{E8} follows by the definition of $\Fi(k)$.

Similarly,
\eqref{gsum} converges absolutely in $L^1((0,\infty),x\dd x)$ 
and $L^2((0,\infty),x\dd x)$ too, and
termwise integration in
\eqref{gsum} yields,
using \eqref{IxAikx}, \eqref{AIG}, 
\eqref{IxAi2}, \eqref{Ioox0k}, \eqref{IxAi2kx},
\eqref{IxAi2abkl},
\begin{align*}
\intoo G(x)x\dd x 
&= 2\qqqbw\intoo G(2\qqqw x)x\dd x 
\\&
=2\qqqbw\sumk\intoo  \gfc(k)\Ai(x+a_k)x\dd x 
\\&
=2\qqqbw\sumk\gfc(k)\bigpar{-\Ai'(a_k)-a_k\AI(a_k)}
\\&
=-\frac{\Ai'(0)}{\dtc\Ai(0)}
+2\qqqbw\sumki\Fi(k)\bigpar{-1+\pi a_k\Gi(a_k)};
\\
\intoo G(x)^2x\dd x 
&= 2\qqqbw\intoo G(2\qqqw x)^2x\dd x 
\\&
=2\qqqbw\sumk\sumj\gfc(k)\gfc(j)\intoo x\Ai(x+a_k)\Ai(x+a_j)\dd x
\\&
=-\frac{\Ai'(0)}{\dtc 3\Ai(0)}
+\frac{2}{\dtc}\sumki \Fi(k)
 \lb-\frac{2}{a_k^3}-\frac{2\Ai'(0)}{\Ai(0)a_k^2} \rb \\
&\qquad+\frac{1}{\dtc}\lb -\sumki \Fi(k)^2\frac23 a_k
-2\sum_{k=1}^\II \sum_{j=1}^\II\kl k\neq j \kr
 \frac{\Fi(k)\Fi(j)}{(a_k-a_j)^2}\rb.
\end{align*}
By \eqref{enp}, $\E N^2=2\intoo G(x)x\dd x$, and
\eqref{enn1}--\eqref{enn2} follow.
Similarly, 
$\E M^2=4\intoo G(x)x\dd x-2\intoo G(x)^2x\dd x=2\E N^2-2\intoo G(x)^2x\dd x
$, and \eqref{emm} follows.
The numerical evaluation is done by \maple, using the method discussed
in \refS{Snum}.
\end{proof}

\begin{remark}
The formula above for $\intoo G(x)^2\dd x$ may be simplified. In fact,   
$(\Ai'(0)/\Ai(0))^2=\sumki a_k\qww$ by \eqref{Aiparseval}, and thus 
the formula can be written
\begin{equation}
  \label{Gparseval}
\intoo G(x)^2\dd x = 
2\qqqw \sumki(\Fi(k)-1/a_k)^2
=2\qqqw\pi^2\sumki \Hi(a_k)^2.
\end{equation}
This can be seen as an instance of Parseval's formula, see
\refR{RSturm}. However, although simpler than our expression above,
this sum converges more slowly and is less suitable for our purposes.
\end{remark}

\begin{remark}
  As is well-known, see \cite[10.4.4--5]{AS},
$\Ai(0)=3\qqqbw/\Gamma(2/3)=3^{-1/6}\Gamma(1/3)/(2\pi)$
and
$\Ai'(0)=-3\qqqw/\Gamma(1/3)=-3^{1/6}\Gamma(2/3)/(2\pi)$.
We prefer to keep $\Ai(0)$ and $\Ai'(0)$ in our formulas.
\end{remark}

\begin{remark}
  \label{Rp2}
Higher moments can be computed by the same method, with Airy integrals
evaluated as shown in \refApp{AiryB}, but in order to get convergence,
one may have to use a version of \eqref{guy1} with more terms taken
out of the expansion \eqref{Hizk} of $\Hi$, as discussed in \refR{Rhigher}.
We do not pursue the details.
\end{remark}

We can also give integral formulas based on \refL{LJ2}.

\begin{theorem}
  \label{TJ1}
The moments of $M$ and $N$ are given by, for any real $p>0$,
\begin{multline*}
\E N^p = -p 2^{-p/3-1} i \intg \intoo x^{p-1}\Ai(z+x)\dd x\,
\frac{\Hi(z)}{\Ai(z)}\dd z,
\\
\shoveleft{\E M^p = -p 2^{-p/3} i \intg \intoo x^{p-1}\Ai(z+x)\dd x\,
\frac{\Hi(z)}{\Ai(z)}\dd z}	
\\
+
p 2^{-p/3-2} \intg\intg \intoo x^{p-1}\Ai(z+x)\Ai(w+x)
\dd x\,
\frac{\Hi(z)}{\Ai(z)}\frac{\Hi(w)}{\Ai(w)}\dd z	\dd w.
\end{multline*}
\end{theorem}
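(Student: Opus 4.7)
The plan is to substitute the contour-integral representation of $G(x)$ from \refL{LJ2} into the moment formulas \eqref{enp} and \eqref{emp}, and then justify interchanging the orders of integration by Fubini's theorem together with a linear change of variable $u=2\qqq x$.

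For the $N$-moments, I start from $\E N^p = p\intoo x^{p-1}G(x)\dd x$ and plug in
$G(x)=\frac{1}{2\ii}\intg \frac{\Hi(z)\Ai(z+2\qqq x)}{\Ai(z)}\dd z$.
To apply Fubini I need absolute integrability of $x^{p-1}\Hi(z)\Ai(z+2\qqq x)/\Ai(z)$ on $(0,\infty)\times\gG$. On $\gG$, \refL{LA} gives a bound of the form $\abs{\Ai(z+2\qqq x)/\Ai(z)}=O\bigpar{\exp(-cx\abs{z}\qq)}$, and combining with $\Hi(z)=O(\abs{z}\qw)$ from \eqref{Hiz} and the pointwise bound $\abs{\Ai(z+2\qqq x)}=O(1)$ (or \refL{Llp} for the $x$-integral after changing variables) gives absolute convergence. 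Once Fubini is applied, the substitution $u=2\qqq x$ gives $x^{p-1}\dd x=2^{-p/3}u^{p-1}\dd u$, and using $\frac{1}{2\ii}=-\frac{\ii}{2}$ yields the factor $-p\cdot 2^{-p/3-1}\ii$ in front.

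For the $M$-moments, I use $\E M^p=2\E N^p - p\intoo x^{p-1}G(x)^2\dd x$, which follows from \eqref{Gm} and \eqref{emp}. The first term contributes the single-integral piece already computed. For $G(x)^2$, I write it as the double contour integral
\begin{equation*}
G(x)^2=-\tfrac14\intg\intg \frac{\Hi(z)\Hi(w)\Ai(z+2\qqq x)\Ai(w+2\qqq x)}{\Ai(z)\Ai(w)}\dd z\dd w,
\end{equation*}
multiply by $-p x^{p-1}$, and interchange the three integrations. After the change of variable $u=2\qqq x$, the factor $-\tfrac14\cdot(-1)\cdot 2^{-p/3}=2^{-p/3-2}$ emerges, matching the stated constant.

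The main obstacle is the triple-integral Fubini step for the $G^2$ term: I need absolute integrability of $x^{p-1}\Hi(z)\Hi(w)\Ai(z+2\qqq x)\Ai(w+2\qqq x)/(\Ai(z)\Ai(w))$ over $(0,\infty)\times\gG\times\gG$. To handle this I bound the inner $x$-integral by \CSineq{}:
\begin{equation*}
\intoo x^{p-1}\abs{\Ai(z+2\qqq x)\Ai(w+2\qqq x)}\dd x\le\biggpar{\intoo x^{p-1}\abs{\Ai(z+2\qqq x)}^2\dd x}\qq\biggpar{\intoo x^{p-1}\abs{\Ai(w+2\qqq x)}^2\dd x}\qq.
\end{equation*}
Each factor is polynomially bounded in $\abs{z}$ or $\abs{w}$ by \refL{Llp} when $\Re z\le 0$, while the exponential decay of $\Ai(z+2\qqq x)/\Ai(z)$ in $x\abs{z}\qq$ from \refL{LA} keeps the combined estimate integrable. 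Coupled with $\Hi(z)=O(\abs{z}\qw)$ from \eqref{Hiz}, the double contour integral in $(z,w)$ converges absolutely, Fubini applies, and the two stated formulas follow.
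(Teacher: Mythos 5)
Your proposal is correct and follows the same route as the paper: substitute the contour representation of $G$ from \refL{LJ2} into \eqref{enp}--\eqref{emp}, change variables $x\mapsto 2^{-1/3}x$, and justify the interchange of integrals by the decay bounds of \refL{LA} and \eqref{Hiz}. One small wobble: the Cauchy--Schwarz detour via \refL{Llp} is both unnecessary and not directly applicable (\refL{Llp} is stated for real translates $\Ai(x-a)$, $a\ge 0$, not for complex $z\in\gG$); the estimate $\Ai(z+2\qqq x)/\Ai(z)=O\bigpar{\exp(-cx|z|\qq-cx\qqc)}$ from \refL{LA}, which you also cite, together with $\Hi(z)=O(|z|\qw)$, already yields absolute convergence of both the double and the triple integrals, exactly as the paper asserts.
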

\begin{proof}
  Immediate from \eqref{enp}--\eqref{emp} and \refL{LJ2} (with a change of
  variables $x\to2\qqqw x$).
The double and triple integrals converge absolutely by \refL{LA} and
  \eqref{Hiz}. 
\end{proof}

For integer $p$, the integrals over $x$ in \refT{TJ1} can be
evaluated by  the formulas in \refApp{AiryB}.
In particular, by \eqref{IAi} and \eqref{Iooab},
\begin{multline}
  \label{emc2}
\E M = -2\qqqw i \intg \frac{\AI(z)\Hi(z)}{\Ai(z)}\dd z
\\
+2^{-7/3}\intg\intg\frac{\Ai(z)\Ai'(w)-\Ai'(z)\Ai(w)}{z-w}
\,\frac{\Hi(z)\Hi(w)}{\Ai(z)\Ai(w)}\dd z\dd w.
\end{multline}
Although there is no singularity when $z=w$ in the double integral in
\eqref{emc2}, it may be advantageous to use different, disjoint,
contours for $z$ and $w$. 
Remember that $\gG=\gG(\gth_0,x_0)$.
We choose $\gth_1\in(\gth_0,\pi/2)$ and $x_1\in(x_0,0)$, and let
$\gGG\=\gG(\gth_1,x_1)$. Then $\gG$ and $\gGG$ are disjoint;
moreover, if $z\in\gG$ and $w\in\gGG$, then 
\begin{equation}
\label{dzw}
|z-w|\ge c\max(\abs z,\abs w)   
\end{equation}
for some $c>0$.
Furthermore, \eqref{dzw} holds also if $z\in \gG_n$ and $w\in\gG'_M$
with $M\ge 2N$. We can replace the double integrals $\intg\intg$ in
\refT{TJ1} and \eqref{emc2} by $\intg\intgg$.
Taking residues, this leads to another formula with sums over Airy zeros.

\begin{theorem}
  \label{Tmean}
  \begin{align}
\label{tmeann}
\E N
= 2\qqqw\pi^2 \sumki\Hi(a_k)(\Hi(a_k)-\Bi(a_k))
,	
\\\label{tmeanm}
\E M
= 2\qqqw\pi^2 \sumki\Hi(a_k)(\Hi(a_k)-2\Bi(a_k))
.	
  \end{align}
\end{theorem}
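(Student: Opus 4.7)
The plan is to evaluate the contour-integral representations of $\E N$ and $\E M$ (from \refT{TJ1} with $p=1$, and from \eqref{emc2}) by closing the contours on the left with the arcs $\gG_N'$ (and $\gGG_M'$ for the inner $w$-contour), collecting residues at the Airy zeros $a_k$, and passing to the limit --- the same strategy already used in the proofs of \refT{T1} and \refL{LJ2}.

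For $\E N$, the formula of \refT{TJ1} with $p=1$ combined with the identity $\intoo \Ai(z+x)\dd x = \AI(z)$ gives
\[
\E N = -2^{-4/3}\,i \intg \frac{\AI(z)\Hi(z)}{\Ai(z)}\dd z.
\]
The integrand has a simple pole at each zero $a_k$ of $\Ai$. Since $\Ai(a_k)=0$, \eqref{AIG} reduces $\AI(a_k)$ to $-\pi\Ai'(a_k)\Gi(a_k)$, so the residue there equals $-\pi\Gi(a_k)\Hi(a_k)$; using $\Gi(a_k)=\Bi(a_k)-\Hi(a_k)$ from \eqref{GBH}, this becomes $\pi\Hi(a_k)(\Hi(a_k)-\Bi(a_k))$. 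The closing-arc contribution vanishes as $N\to\infty$ by Lemmas~\ref{LA} and~\ref{LA2} together with \eqref{Hiz}, so the integral equals $2\pi i$ times the sum of residues; simplifying the prefactor yields \eqref{tmeann}.

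For $\E M$ we start from \eqref{emc2}, replacing $\intg\intg$ by $\intg\intgg$ over the disjoint contours $\gG,\gGG$ as noted there. The first single-integral term equals $2\,\E N = 2\cdot 2\qqqw\pi^2\sum_k\Hi(a_k)(\Hi(a_k)-\Bi(a_k))$ by the computation above. In the double integral we integrate first over $z\in\gG$: as a function of $z$, the integrand has simple poles only at the $a_k$ (the factor $1/(z-w)$ is harmless since $\gG\cap\gGG=\emptyset$), and using $\Ai(a_k)=0$ the residue at $a_k$ simplifies to $\Hi(a_k)\Hi(w)/(w-a_k)$. Next we integrate over $w\in\gGG$: for each $k$ the function $\Hi(w)/(w-a_k)$ has a single simple pole at $w=a_k$ inside $\gGG$ with residue $\Hi(a_k)$, and closing $\gGG$ on the left is justified because $\Hi(w)=O(|w|^{-1})$ there by \eqref{Hiz}. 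Hence the double integral equals $(2\pi i)^2\sum_k\Hi(a_k)^2 = -4\pi^2\sum_k\Hi(a_k)^2$, and combining with the first term yields \eqref{tmeanm}.

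The hard part will be the rigorous justification of the two successive contour closures in the double integral, especially because the resulting series have terms of size only $O(k^{-5/6})$ (since $\Hi(a_k)=O(k^{-2/3})$ while $\Bi(a_k)=O(k^{-1/6})$) and so are not absolutely convergent. This will be handled by working throughout with the closed finite contours $\gG_N$ and $\gGG_M$, so that both residue sums are finite, and using the uniform bounds from Lemmas~\ref{LA}, \ref{LA2}, \ref{LA3} and \eqref{Hiz} to show that every arc contribution --- the single arcs $\gG_N'$ and $\gGG_M'$ as well as the product arc arising in the double integral --- tends to zero as $N,M\to\infty$; once this is in place, the natural ordered partial sums $\sum_{k=1}^N$ converge to the stated series.
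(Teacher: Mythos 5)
Your proof is essentially correct and follows the same strategy as the paper: the single integral for $\E N$ is closed by residues over $\gG_n$ using \eqref{AIG} and \eqref{GBH}, and the double integral in $\E M$ is evaluated by iterated residue closures over $\gG_n$ and then $\gGG_m$ (with $m\ge 2n$) using the arc bounds from Lemmas~\ref{LA2}--\ref{LA3} and \eqref{Hiz}. The only difference is cosmetic: you simplify the inner residue to $\Hi(a_k)\Hi(w)/(w-a_k)$ using $\Ai(a_k)=0$ directly, whereas the paper introduces the auxiliary function $Q(z,w)=\intoo\Ai(z+x)\Ai(w+x)\dd x$ and invokes the orthogonality $Q(a_j,a_k)=0$ for $j\ne k$ at the very end; these yield the same finite residue sums. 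One small inaccuracy in your closing remark: the $O(k^{-5/6})$ non-absolutely-convergent terms come from the $\Hi(a_k)\Bi(a_k)$ part of the \emph{single} integral (via $\AI(a_k)=-\pi\Ai'(a_k)\Gi(a_k)$), not from the double integral, whose residue sum $\sum_k\Hi(a_k)^2$ has terms $O(k^{-4/3})$ and is absolutely convergent; it is the single integral's closure that requires the delicate argument.
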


These formulas are closely related to \eqref{en2}--\eqref{E8}. They
are simpler, but less 
suitable for numerical calculations since they do not even converge absolutely;
the terms in the sums decrease as $k^{-5/6}$ by \eqref{Biglk} and
\eqref{Higlk}. 
(However, they alternate in sign, and the sums converge.)
The formulas \eqref{tmeann} and \eqref{tmeanm}
are what we obtain if we substitute \eqref{guy} in \eqref{enp} and \eqref{emp}
(with $p=1$) and integrate termwise; however, since the resulting sums are not
absolutely convergent, termwise integration has to be justified
carefully, and we use a detour via complex integration.

\begin{proof}
  Let 
  \begin{equation*}
Q(z,w)\=\intoo\Ai(z+x)\Ai(w+x)\dd x.	
  \end{equation*}
The integral converges absolutely by \eqref{Aioo} for all complex $z$
and $w$, uniformly in compact sets, and thus $Q$ is an entire function
of two variables; moreover, \eqref{IAi2} and \eqref{IAi2ab} yield the
explicit formulas
\begin{align}
  Q(z,z)&=\Ai'(z)^2-z\Ai(z)^2, \label{q1}\\
Q(z,w)&=\frac{\Ai(z)\Ai'(w)-\Ai'(z)\Ai(w)}{z-w}, \qquad z\neq w.
\label{q2}
\end{align}
By \refT{TJ1} and \eqref{emc2}, using  $\gGG$ as discussed above, 
\begin{align*}
	\E N
&= -2^{-4/3} i \intg \frac{\AI(z)\Hi(z)}{\Ai(z)}\dd z,
\\
	\E M
&= -2\qqqw i \intg \frac{\AI(z)\Hi(z)}{\Ai(z)}\dd z
+2^{-7/3}\intg\intgg\frac{Q(z,w) \Hi(z)\Hi(w)}{\Ai(z)\Ai(w)}\dd z\dd w.
\end{align*}

First consider the simple integral, $\intg\Phi(z)\dd z$ say.
It follows from \refL{LA3} and \eqref{Hiz} that $\intg\Phi(z)\dd
z-\int_{\gG_n}\Phi(z)\to0$ as  \ntoo, and we find by the residue theorem 
applied to $\gG_n$, letting \ntoo, together with
\eqref{AIG} and \eqref{GBH},
\begin{equation*}
  \begin{split}
  \intg \frac{\AI(z)\Hi(z)}{\Ai(z)}\dd z
&=2\pi\ii \sumki\frac{\AI(a_k)\Hi(a_k)}{\Ai'(a_k)}
\\&
=-2\pi^2\ii \sumki\Gi(a_k)\Hi(a_k)
\\&
=-2\pi^2\ii \sumki\Bi(a_k)\Hi(a_k)
+2\pi^2\ii \sumki\Hi(a_k)^2
.	
  \end{split}
\end{equation*}
The sums converge, \eg{} by the argument just given, but only the
final sum $\sum\Hi(a_k)^2$ converges absolutely, by
\eqref{Biglk}--\eqref{Giglk}. This yields the result
\eqref{tmeann} for $N$.

Next consider the double integral. If $z\in\gG$ and $w\in\gG'$, or
$z\in \gG_n$ and $w\in \gGG_m$ with $m\ge 2n$, then by \eqref{dzw},
\eqref{q2} and \refL{LA3},
\begin{equation*}
  \begin{split}
  \lrabs{\frac{Q(z,w)}{\Ai(z)\Ai(w)}}
&
\le \frac{C}{\abs z+\abs w}
\lrpar{\Bigabs{\frac{\Ai'(z)}{\Ai(z)}}+\Bigabs{\frac{\Ai'(w)}{\Ai(w)}}}
\\&
\le \frac{C}{\abs z+\abs w}\bigpar{|z|\qq+|w|\qq}
\le {C}|z|\qqqqw|w|\qqqqw.	
  \end{split}
\end{equation*}
It follows that $\intg\intgg-\int_{\gG_n}\int_{\gGG_m}\to0$ 
as $m\ge 2n\to\infty$.
Using the residue theorem for first $\int_{\gG_n}$ and then
$\int_{\gGG_m}$, with $m=2n$, we find that the double integral above
equals
\begin{equation*}
  \begin{split}
\lim_{\ntoo} (2\pi i)^2
\sum_{j=1}^n\sum_{k=1}^{2n}
\frac{Q(a_j,a_k)\Hi(a_j)\Hi(a_k)}{\Ai'(a_j)\Ai'(a_k)}.
  \end{split}
\end{equation*}
By \eqref{q2}, $Q(a_j,a_k)=0$ when $j\neq k$, and
$Q(a_k,a_k)=\Ai'(a_k)^2$ by \eqref{q1}. Hence the double integral equals
$ -4\pi^2 \sumki \Hi(a_k)^2$.

The result \eqref{tmeanm} follows by combining the terms.
\end{proof}

\section{Proof of \refT{TEM}}\label{SpfTEM}

\begin{proof}[Proof of \eqref{em1}]
We shall use \citet{Groeneboom}.     
Fix $\gamc>0$. 
(We may choose \eg{} $\gamc=1/2$ as in other parts of this paper
by \eqref{scale}, but for ease of comparison with
\cite{Groeneboom}, and because we find it instructive to see
how the homogeneity works, we write the proof for a general $\gamc$.)
Fix also $s\ge0$ and define 
\begin{equation}
  \vs\=\max_{t\ge-s}(W(t)-\gamc t^2). 
\end{equation}
Thus, $\vx0=N_\gamc$, 
and $\vs\upto\vx\infty=M_\gamc$ as
$s\to\infty$.

For $t\ge -s$, the process $\ws(t)\=W(t)-W(-s)$
is a Brownian motion, starting at $0$ at time $-s$.
Define
\begin{equation}\label{v0}
\tM=\tM_s\=\max_{t\ge-s} \bigpar{\ws(t)-\gamc t^2}
  =\vs-W(-s),
\end{equation}
and let $\tau=\tau_s$ be the (\as{} unique) time
with $\ws(\tau)-\gamc\tau^2=\tM$. 
Note that $\tM\ge\ws(-s)-\gamc s^2=-\gamc s^2$ and
$\tau\ge-s$ (strict inequalities hold a.s.)
\citetq{Corollary 3.1}{Groeneboom} applies to $W_s(t)-\gamc t^2$ (with $s$
replaced by $-s$ and $x=-\gamc s^2$),
and shows that $\tau$ and $\tM$ have a joint
density, for $t>-s$ and $y>x=-\gamc s^2$,
\begin{equation}\label{v1}
  \begin{split}
\ftm(t,y)
&=
\exp\bigpar{-\tfrac23\gamc^2(t^3+s^3)+2\gamc s(y+\gamc s^2)}\hcyc(t+s)\kc(t)	
\\&
=
\exp\bigpar{\tfrac43\gamc^2s^3+2\gamc sy}\hcyc(t+s)\gc(t)	.
  \end{split}
\end{equation}
where the functions $\hcyc$, $\kc$ and $\gc$ are given in
\cite{Groeneboom}.
Integrating over $t\ge-s$ we find, with
$h_{\gamc,a}(t)=0$ for $t<0$ and $\cgc(t)\=\gc(-t)$,
the density $\fm$ of $\tM$ as, for $y>-\gamc s^2$,
\begin{align}
  \fm(y)
&=\int_{-s}^\infty f(t,y)\dd t
= \exp\bigpar{\tfrac43\gamc^2s^3+2\gamc sy}\int_{-s}^\infty\hcyc(t+s)\gc(t)\dd t
\nonumber
\\
&= \exp\bigpar{\tfrac43\gamc^2s^3+2\gamc sy}\hcyc*\cgc(s).	
\label{fm}  
\end{align}

By \cite[Theorem 2.1]{Groeneboom}, for $a>0$,
$\hca\ge0$ and $\hca$ has the Laplace transform,
for $\gl>0$, (see also \refR{RFT})
\begin{equation}
  \label{v2}
\intoo e^{-\gl u} \hca(u)\dd u =
  \frac{\Ai\bigpar{\fcqqq a+\xi}}{\Ai(\xi)},
\quad \xi\=\bccqqqw \gl.
\end{equation}
Letting $\gl\downto0$ we see that
\begin{equation*}
  \intoo \hca(u)\dd u = \Ai\bigpar{\fcqqq a}/\Ai(0)<\infty,
\end{equation*}
so $\hca\in L^1(\bbR)$ and \eqref{v2} holds
for all complex $\gl$ with $\Re\gl\ge0$ by
analytic continuation. In particular, $\hca$ has the Fourier
transform, see \eqref{ft2},
\begin{equation*}
  \FT \hca(\go) =\intoooo e^{-\ii\go u}\hca(u)\dd u
=
  \frac{\Ai\bigpar{\fcqqq a+\ii\bccqqqw\go}}{\Ai\bigpar{\ii\bccqqqw\go}},
\qquad \go\in\bbR.
\end{equation*}
Furthermore, by \eqref{v1}, 
$\gc\ge0$, 
and by \cite[Corollary 3.1]{Groeneboom}, 
$\gc\in L^1(\bbR)$ with the Fourier transform
\begin{equation}\label{ftgc}
  \FT{\gc}(\go)
=\intoooo e^{-\ii\go u}\gc(u)\dd u
=
  \frac{2\qqq \gamc\qqqw}{\Ai\bigpar{-\ii\bccqqqw\go}},
\qquad \go\in\bbR.
\end{equation}
Hence, for $y>-\gamc s^2$, $\hcyc*\cgc$ has the
Fourier transform
\begin{equation}\label{v2a}
  \FT \hcyc(\go) \FT \gc(-\go)
=
2\qqq \gamc\qqqw
\frac{\Ai\bigpar{\fcqqq (y+\gamc s^2)+\ii\bccqqqw\go}}
{\Ai\bigpar{\ii\bccqqqw\go}^2}.  
\end{equation}
Since $|\Ai(\ii y)|\asymp|y|\qqqqw\exp\bigpar{\tfrac{\sqrt2}{3}|y|\qqc}$
as $y\to\pm\infty$ by \eqref{Aiy},
\eqref{ftgc} implies 
$|\FT\gc(\go)|\asymp|\go|\qqqq\exp\bigpar{-\tfrac{1}{3\gamc}|\go|\qqc}$;
consequently $\FT\gc\in L^1(\bbR)$.
Furthermore, $\FT\hca(\go)$ is bounded for each $a$
(by $\hca\in L^1$ or by \refL{LA}), and thus the product 
$\FT\hcyc(\go)\FT\gc(-\go)\in L^1(\bbR)$. 
Consequently, the Fourier inversion formula applies to
\eqref{v2a}, and \eqref{fm} thus yields, for $y>-\gamc s^2$,
\begin{equation}\label{fm2}
  \begin{split}
\fm(y)
&= \exp\bigpar{\tfrac43\gamc^2s^3+2\gamc sy}\frac{2\qqq \gamc\qqqw}{2\pi}
\\&\hskip8em
\times
\intoooo e^{\ii s\go}
\frac{\Ai\bigpar{\fcqqq	(y+\gamc s^2)+\ii\bccqqqw\go}}{\Ai(\ii\bccqqqw\go)^2}
\dd\go
\\&
=\frac{\fcqqq}{2\pi} \exp\bigpar{\tfrac43\gamc^2s^3+2\gamc sy}
\intoooo e^{\ii s\bccqqq v}
\frac{\Ai\bigpar{\fcqqq	(y+\gamc s^2)+\ii v}}{\Ai(\ii v)^2}
\dd v
.  
  \end{split}
\end{equation}

Multiplying by $e^{zy}$ and integrating, we obtain for any
$z\in\bbC$ the
following, where the double integral is absolutely convergent by \refL{LA},
\begin{equation}\label{jukk}
  \begin{split}
\E e^{z\tM}
&
=\int_{-\gamc s^2}^\infty e^{zy}\fm(y)\dd y
=\int_{0}^\infty
e^{zx-z\gamc s^2}\fm(x-\gamc s^2)\dd x
\\&
=\frac{\fcqqq}{2\pi} 
\int_{x=0}^\infty\int_{v=-\infty}^\infty 
e^{\frac43\gamc^2s^3+2\gamc sx-2\gamc^2s^3+zx-\gamc s^2z+\ii s\bccqqq v}
\frac{\Ai\bigpar{\fcqqq	x+\ii v}}{\Ai(\ii v)^2}
\dd v \dd x
\\&
=\frac{1}{2\pi} e^{-\frac23\gamc^2s^3-\gamc s^2z}
\int_{v=-\infty}^\infty  \int_{x=0}^\infty
e^{(z+2\gamc s)\fcqqqw x+\ii s\bccqqq v}
\frac{\Ai\bigpar{x+\ii v}}{\Ai(\ii v)^2}
\dd x \dd v
\\&
=\frac{1}{2\pi} e^{-\frac23\gamc^2s^3-\gamc s^2z}
\int_{v=-\infty}^\infty  
\frac{e^{\ii s\bccqqq v}}{\Ai(\ii v)^2}
\int_{x=0}^\infty
e^{(z+2\gamc s)\fcqqqw x}
{\Ai\bigpar{x+\ii v}}
\dd x \dd v.
\end{split}
\end{equation}

Since $\Ai$ is bounded on the negative real axis by
\eqref{Ai-}, \refL{LV} implies that, for $\Re
z>0$,
\begin{equation*}
  \begin{split}
  \intoo e^{zt}\Ai(t) \dd t
&=e^{t^3/3}-  \int_{-\infty}^0
e^{zt}\Ai(t)\dd t
=e^{t^3/3}+O\biggpar{ \int_{-\infty}^0
e^{\Re zt}\dd t}
\\&
=e^{t^3/3}+O\Bigparfrac1{\Re z}.	
  \end{split}
\end{equation*}
Moreover, \eqref{Aiy} implies that for
$y\in\bbR$ and $z\in\bbC$, 
\begin{equation*}
  \int_0^{iy} e^{zt}\Ai(w) \dd w
=O\Bigpar{e^{\frac{\sqrt2}3|y|\qqc+|\Im z||y|}}.
\end{equation*}

Hence, if $\Re z\ge \fcqqqw$, say, and $v\in\bbR$, then, 
using Cauchy's integral formula on a large rectangle with vertices
$\set{0,\ii y, R, R+\ii y}$ and letting
$R\to\infty$, using \eqref{Aioo} to control the tails,
\begin{equation*}
  \begin{split}
\intoo e^{zx}\Ai(x+iv)\dd x 	
&=\int_{\ii v}^{\infty+ïi v} e^{zw-\ii vz}\Ai(w)\dd w
\\&
=e^{-\ii vz}\intoo e^{zw}\Ai(w)\dd w 	
-
e^{-\ii vz}\int_0^{\ii v} e^{zw}\Ai(w)\dd w 	
\\&
=e^{z^3/3-\ii vz}
+O\Bigpar{e^{\frac{\sqrt2}3|v|\qqc+|\Im z||v|}}.
  \end{split}
\end{equation*}
Hence, if $\Re z+2\gamc s>1$, \eqref{jukk} yields, using
\eqref{Aioo} again for the error term,
\begin{equation*}
  \begin{split}
\E e^{z\tM}
&
=\frac{1}{2\pi} e^{-\frac23\gamc^2s^3-\gamc s^2z}
\int_{-\infty}^\infty  
\frac{e^{\ii s\bccqqq v}}{\Ai(\ii v)^2}
e^{(z+2\gamc s)^3/(12\gamc)-\ii v (z+2\gamc s)\fcqqqw}
\dd v
\\&
\qquad{}
+
O\biggpar{
e^{-\frac23\gamc^2s^3-\gamc s^2\Re z}
\int_{-\infty}^\infty  
\frac{1}{|\Ai(\ii v)|^2}
{e^{\frac{\sqrt2}3|v|\qqc+|\Im z\fcqqqw||v|}}
\dd v
}
\\&
=\frac{1}{2\pi} 
e^{\frac{z^3}{12\gamc}+\frac{z^2s}2}
\int_{-\infty}^\infty  
\frac{e^{-\ii \fcqqqw zv}}{\Ai(\ii v)^2}
\dd v
+
O\bigpar{
e^{-\frac23\gamc^2s^3-\gamc s^2\Re z+O(|\Im z|^2)}
}
\end{split}
\end{equation*}
In particular, if $2\gamc s>2$, say, this holds uniformly for
$|z|<1$, and we may differentiate the analytic functions at
$z=0$ and obtain
\begin{equation*}
  \begin{split}
\E \tM
&
=\frac{1}{2\pi} 
\int_{-\infty}^\infty  
\frac{-\ii \fcqqqw v}{\Ai(\ii v)^2}
\dd v
+
O\bigpar{
e^{-\frac23\gamc^2s^3+\gamc s^2}
}.
\end{split}
\end{equation*}
Since $\E \vs=\E(\tM+W(-s))=\E\tM$ by \eqref{v0},
  we find by letting $s\to\infty$ and choosing $\gamc=1/2$,
  \begin{equation*}
\E M=\lim_{s\to\infty} \E \vs	
=\frac{2\qqqw}{2\pi\ii} 
\int_{-\infty}^\infty  
\frac{v}{\Ai(\ii v)^2}
\dd v,
  \end{equation*}
which is \eqref{em1}.
\end{proof}

\begin{remark}
  Setting $s=0$ and $\gamc=1/2$ in \eqref{fm2}, we obtain another formula for
  the density of $N$: For $y>0$, 
\begin{equation}\label{fny}
  \begin{split}
f_N(y)
=\frac{2\qqq}{2\pi} 
\intoooo 
\frac{\Ai\bigpar{2\qqq	y+\ii v}}{\Ai(\ii v)^2}
\dd v
.  
  \end{split}
\end{equation}
By residue calculus, 
as with similar integrals in \eg\ the proof of
\refT{T1}, this may be written as a sum
of residues 
of $2\qqq \Ai(2\qqq y+z)/\Ai(z)^2$
at the poles $a_k$; however, now the poles are double and
we omit the details.
\end{remark}

The integral formulas \eqref{em1}--\eqref{em2} can
be transformed into each other by properties of the Airy functions as follows.

\begin{proof}[Proof of \eqref{em2}]
The integrand in \eqref{em1} is analytic except at the zeros of
$\Ai$, which lie on the negative real axis.
Furthermore,
by \eqref{Aioo}, $|\Ai(z)|$ is exponentially large,
so the integrand in \eqref{em1} is exponentially small,
as $\abs z\to\infty$ with
$\pi/3+\gd\le|\arg(z)|\le\pi-\gd$; 
in particular when 
$\pi/2\le|\arg(z)|\le2\pi/3$. 
Hence, we can
deform the integration path from the imaginary axis to any reasonable
path in this domain.
We choose to integrate along the two rays from the origin with
$\arg z=\pm2\pi/3$, and obtain thus
\begin{equation}\label{ema}
 \E M=
-\frac{2\qqqw}{2\pi\ii}\int_{-e^{-2\pi\ii/3}\infty}^{e^{2\pi\ii/3}\infty} 
\frac{z\dd z}{\Ai(z)^2}
=
-\frac{2\qqqw}{2\pi\ii}\int_{0}^{\infty} 
\sum_{\pm}\pm\frac{e^{\pm4\pi\ii/3}t\dd t}{\Ai(e^{\pm2\pi\ii/3}t)^2},
\end{equation}
which by the formula \cite[10.4.9]{AS} 
\begin{equation}\label{A2pi/3}
  \Ai(ze^{\pm2\pi\ii/3})=\tfrac12e^{\pm\pi\ii/3}\bigpar{\Ai(z)\mp\ii\Bi(z)}
\end{equation}
yields
\begin{equation*}
  \begin{split}
 \E M&=
-\frac{2\qqqw}{2\pi\ii}\int_{0}^{\infty} 
\sum_{\pm}\pm4\frac{e^{\pm2\pi\ii/3}t\dd t}{\bigpar{\Ai(t)\mp\ii\Bi(t)}^2}
\\&=
-\frac{2\qqqb}{\pi\ii}\int_{0}^{\infty} 
\sum_{\pm}\pm\frac{e^{\pm2\pi\ii/3}\bigpar{\Ai(t)\pm\ii\Bi(t)}^2}
{\bigpar{\Ai(t)^2+\Bi(t)^2}^2}\,t\dd t
\\&=
-\frac{2\qqqb}{\pi\ii}\int_{0}^{\infty} 
\sum_{\pm}\pm\frac{(-1\pm\sqrt3\ii)
  \bigpar{\Ai(t)^2-\Bi(t)^2\pm2\ii\Ai(t)\Bi(t)}}
{2\bigpar{\Ai(t)^2+\Bi(t)^2}^2}\,t\dd t
\\&=
-\frac{2\qqqb}{\pi}\int_{0}^{\infty} 
\frac{{\sqrt3\Ai(t)^2-\sqrt3\Bi(t)^2-2\Ai(t)\Bi(t)}}
{\bigpar{\Ai(t)^2+\Bi(t)^2}^2}\,t\dd t,
  \end{split}
\end{equation*}
which proves \eqref{em2}.  
\end{proof}

\begin{proof}[Proof of \eqref{em3}]
To prove \eqref{em3}, we as above
deform the integration path in \eqref{em1} 
and integrate along the two rays from the origin with
$\arg z=\pm2\pi/3$ and obtain \eqref{ema}.
We now use the indefinite integral 
\begin{equation}\label{alb1}
 \int\frac{\dd z}{\Ai(z)^2} = \pi\frac{\Bi(z)}{\Ai(z)}
\end{equation}
given by  \cite{AlbrightG86} (and easily verified by
derivation, using the Wronskian
$\Ai(z)\Bi'(z)-\Ai'(z)\Bi(z)=1/\pi$
\cite[10.4.10]{AS}). 
We have by \eqref{A2pi/3} and \AS{10.4.6}
\begin{equation*}
  \Bi(ze^{2\pi\ii/3})
=e^{\pi\ii/6}  \Ai(ze^{-2\pi\ii/3})
+e^{-\pi\ii/6}  \Ai(z)
=\tfrac12e^{-\pi\ii/6}\bigpar{3\Ai(z)+\ii\Bi(z)}
\end{equation*}
and thus, by \eqref{A2pi/3} again,
\begin{equation*}
\frac{\Bi(ze^{2\pi\ii/3})}{\Ai(ze^{2\pi\ii/3})}
=e^{-\pi\ii/2}\frac{3\Ai(z)+\ii\Bi(z)}{\Ai(z)-\ii\Bi(z)}
=\ii-4\ii\frac{\Ai(z)}{\Ai(z)-\ii\Bi(z)}.
\end{equation*}
By \eqref{Aioo} and \eqref{Bioo}, this converges
rapidly to $\ii$ as $z\to\infty$ along the
positive real axis. Hence an integration by parts yields
\begin{align}
\int_{0}^{e^{2\pi\ii/3}\infty} \frac{z}{\Ai(z)^2}\dd z  
&
=
\Bigsqpar{\pi z\Bigpar{\frac{\Bi(z)}{\Ai(z)}-\ii}\dd z}_0^{e^{2\pi\ii/3}\infty}
-\int_{0}^{e^{2\pi\ii/3}\infty}\pi\Bigpar{\frac{\Bi(z)}{\Ai(z)}-\ii}\dd z  
\nonumber\\&
=
0+
e^{2\pi\ii/3}\pi\int_{0}^{\infty}4\ii\frac{\Ai(t)}{\Ai(t)-\ii\Bi(t)}\dd t .
\label{emc}\\&
=
2\pi\int_{0}^{\infty}(-\ii-\sqrt3)
\frac{\Ai(t)^2+\ii\Ai(t)\Bi(t)}{\Ai(t)^2+\Bi(t)^2}\dd t .
\label{emb}
\end{align}
The integral along the line from
$e^{-2\pi\ii/3}\infty$ to 0 equals $-1$ times the complex
conjugate of the integral in \eqref{emb}, so we obtain from
\eqref{ema} and \eqref{emb}, 
\begin{equation*}
 \E M=
-\frac{2\qqqw}{\pi}\Im\int_{0}^{e^{2\pi\ii/3}\infty} 
\frac{z\dd z}{\Ai(z)^2}
=
2\qqqb\int_{0}^{\infty}
\frac{\Ai(t)^2+\sqrt3\Ai(t)\Bi(t)}{\Ai(t)^2+\Bi(t)^2}\dd t ,
\end{equation*}
which is \eqref{em3}.
\end{proof}

\begin{proof}[Proof of \eqref{em4}]
Follows similarly by
\eqref{ema} and \eqref{emc}, we omit the details.
\end{proof}

\section{Numerical computation}\label{Snum}

The sums in \refT{TM} converge, but rather slowly. For example, in
\eqref{en1}, 
$\abs{\Fi(k)\Gi(k)}=\abs{\Fi(k)}\,\abs{\Bi(k)-\Hi(k)}
\sim c k^{-17/6}$ for some $c>0$, see the asymptotic expansions below.

To obtain numerical values with high accuracy
of the sums in \eqref{en1}--\eqref{emm}, we therefore use
asymptotic expansions of the summands.
More precisely, for each sum, we first compute $\sum_{k=1}^{199}$
numerically.
(All computations are done by \maple.)
We then evaluate $\sum_{200}^\infty$ by replacing the summands by the
first terms in the following asymptotic expansions.
(We provide here only one or two terms in each asymptotic expansions, but we
use at least $5$ terms in our numerical computations.) 
We note that $\Bi(a_k)$ alternates in sign, so 
for terms containing it,
we group the terms with
$k=2j$ and $k=2j+1$ together, for $j\ge100$. 
The resulting infinite sums are readily computed numerically.
(They can be expressed in the Riemann zeta function at some
points.)

We use the expansions, see \AS{(10.4.94), (10.4.105)}, \eqref{Hizk},
\eqref{Fik}, \eqref{Bioo-}, 
\begin{align*}
  |a_k|&
\sim \frac{3^{2/3}\pi^{2/3}}{2\qqqb} \Bigpar{k^{2/3}-\frac16 k\qqqw+\dots}
\\ 
\Hi(a_k)&\sim-\frac1\pi a_k\qw -\frac2\pi a_k^{-4} + \dots
=\frac{\dtc}{ 3^{2/3}\pi^{5/3}}\Bigpar{k^{-2/3}+\frac16 k^{-5/3}+\dots}\\
\Fi(k)&\sim-2a_k^{-4}+\dots
=-\frac{2^{11/3}}{ 3^{8/3}\pi^{8/3}}k^{-8/3}+\ldots,\\
\Bi(a_{k})&\sim(-1)^k\frac{2^{1/6}}{3^{1/6}\pi^{2/3}}k^{-1/6}+\ldots,\\
g_1(j)&:=
\Fi(2j)\Bi(a_{2j})+\Fi(2j+1)\Bi(a_{2j+1})
\sim-\frac{17\cdot 3^{1/6}}{162\,\pi^{10/3}}j^{-23/6}+\ldots,
\\
g_2(j)&:=
a_{2j}\Fi(2j)\Bi(a_{2j})+a_{2j+1}\Fi(2j+1)\Bi(a_{2j+1})
\sim\frac{13\cdot 3^{5/6}}{162\,\pi^{8/3}}j^{-19/6}+\ldots.
\end{align*}

Thus, for example, $\E N= \EN_0+\EN_1+\EN_2$, where
\begin{align*}
\EN_0&=0.6955290109\ldots ,\\  
\EN_1&:=-\frac{\pi}{\dt} \sumkcc \Fi(k)\Bi(a_k)
=-\frac{\pi}{\dt} \sumjc g_1(j)
=0.5317
\lcdots 10^{-8},
\\
\EN_2&:=\frac{\pi}{\dt} \sumkcc \Fi(k)\Hi(a_k)
=-0.16722
\lcdots 10^{-7},
\end{align*}
yielding $\E(N)=0.6955289995\ldots$.
Similarly, 
$\E M= \EM_0+\EM_1+\EM_2$, with
\begin{align*}
\EM_0&=0.9961930092\ldots,\\
\EM_1&:=-\frac{\pi}{\dt} \sumkcc 2\Fi(k)\Bi(a_k)
=2\EN_1
= 0.10635
\lcdots 10^{-7},
\\
\EM_2&:=2\qqqw \sumkcc \Fi(k)^2
=0.20
\lcdots 10^{-13},
\end{align*}
yielding $\E M=0.9961930199\ldots$, which   
fits with \eqref{em0}.
Actually, we have $15$ digits of precision with the expansions we have used.

For the second moments, we compute the sums in \eqref{enn2} and \eqref{emm}
in the same way. 
For the double sum in \eqref{emm}, we compute the sum with $k\le199$ exactly,
and find using asymptotics the tail sum 
\begin{equation*}
  2^{7/3}\sum_{k=200}^\II \sum_{j=1}^{k-1} 
 \frac{\Fi(k)\Fi(j)}{(a_k-a_j)^2}
=0.4\lcdots 10^{-10}.
\end{equation*}
(We may, for example, use the first term asymptotics for $\Fi(j)$ and $a_j$ 
for $j\ge200$, since the sum is small and only a low relative
precision is needed.) 

The (complementary) distribution function $G(x)$ and the density
function $f_N(x)$ may, for any given $x>0$,  be computed to high
precision from \eqref{guy1} and \eqref{guy1'}
in the same way; for the tails of the sums 
$\sumkzz\Fi(k)\Ai(a_k+2\qqq x)/\Ai'(a_k)$ and
$\sumkzz\Fi(k)\Ai'(a_k+2\qqq x)/\Ai'(a_k)$ we use the asymptotic
expansion of $\Fi(k)$ given above together with, see 
\AS{(10.4.94), (10.4.97), (10.4.105)} and \eqref{Ai-}--\eqref{Ai'-}, 
\begin{align*}
  \Ai(a_k+2\qqq x)&\sim (-1)^{k+1}
 \frac{2^{1/6}}{3^{1/6}\pi^{2/3}}
{\sin\bigpar{(3\pi)\qqq x k\qqq}}\,  k^{-1/6}+\dots
\\
  \Ai'(a_k+2\qqq x)&\sim
(-1)^{k+1}
 \frac{3^{1/6}}
{2^{1/6}\pi^{1/3}}  
{\cos\bigpar{(3\pi)\qqq x k\qqq}}\,
k^{1/6}+\dots
\intertext{for fixed $x$, and, as a special case,}
  \Ai'(a_k)&\sim (-1)^{k+1}
 \frac{3^{1/6}}{2^{1/6}\pi^{1/3}}  k^{1/6}+\dots
\end{align*}
The distribution and density functions of $M$ then are given by
\eqref{Fm}--\eqref{Gm} and \eqref{fmx}.

As an illustration, we plot the tail sum (from $k=200$) for 
$f_N(x)$ in 
\refF{FcorfN}.

\begin{figure}[htbp]
	\centering
		\includegraphics[width=0.8\textwidth,angle=0]{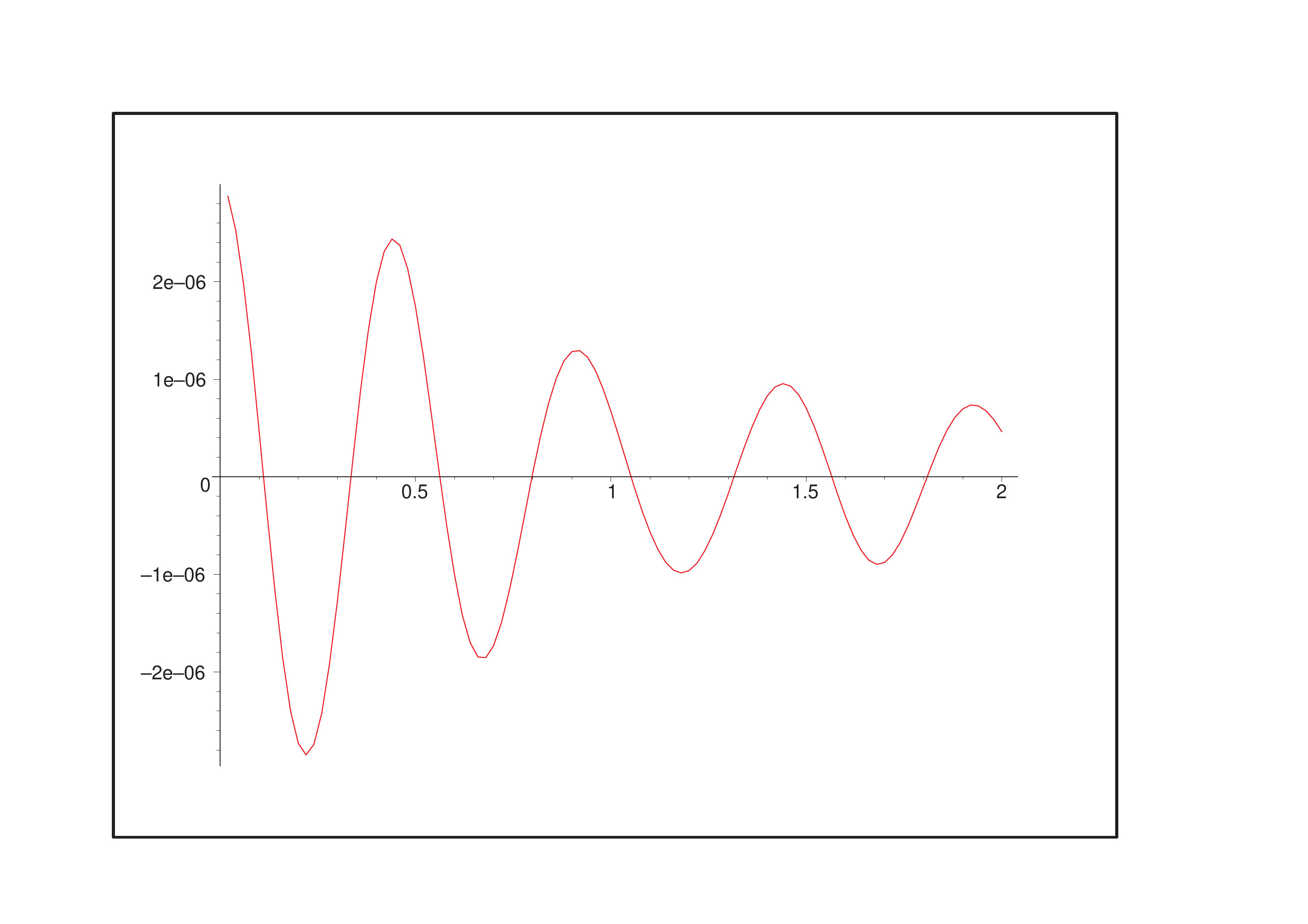}
	\caption{Tail sum from $k=200$ for $f_N(x)$}
	\label{FcorfN}
\end{figure}

\appendix

\section{Some Airy function estimates} \label{AiryA}

The Airy function $\Ai(x)$ and its derivative have along the positive real
axis, and more generally as $\abs z\to\infty$ with
$|\arg z|<\pi-\gd$ for any fixed $\gd>0$, asymptotic
expansions given in full in
\cite[10.4.59 and 10.4.61]{AS}; we need only the leading terms: 
\begin{align}
  \Ai(z)
&\sim \tfrac1{2\sqrt \pi} z\qqqqw e^{-\frac23z\qqc}, 
\label{Aioo}
\\
  \Ai'(z)
&\sim -\tfrac1{2\sqrt \pi} z\qqqq e^{-\frac23z\qqc}
.
\label{Aiioo}
\end{align}
In particular, along the imaginary axis, for $y\in\bbR$,
\begin{equation}\label{Aiy}
|\Ai(\ii y)|
\sim \tfrac1{2\sqrt \pi} |y|\qqqqw e^{\frac{\sqrt2}3|y|\qqc}.
\end{equation}

Along the negative real axis, $\Ai$ and $\Ai'$
oscillate and have zeros; 
by \AS{(10.4.60), (10.4.62)} (where further terms are given),
we have the asymptotic formulas
\begin{align}\label{Ai-}
\Ai(-x)&=\pi\qqw  x\qqqqw\bigpar{\sin\bigpar{\tfrac23x\qqc+\tfrac{\pi}4}+o(1)}
=O(x\qqqqw),
\\
\Ai'(-x)&=-\pi\qqw  x\qqqq\bigpar{\cos\bigpar{\tfrac23x\qqc+\tfrac{\pi}4}+o(1)}
\label{Ai'-}.
\end{align}
and 
more generally, 
as $\abs z\to\infty$ in any domain 
$|\arg z|<\frac23\pi-\gd$, 
\begin{multline}\label{Ai-z}
  \Ai(-z)
= \pi\qqw z\qqqqw
\biggpar{\sin\Bigpar{\frac23z\qqc+\frac\pi4}\bigpar{1+O(|z|\qqcw)}
+O\bigpar{|z|\qqcw}
},
\end{multline}
\begin{multline}\label{Ai'-z}
  \Ai'(-z)
= -\pi\qqw z\qqqq
\biggpar{\cos\Bigpar{\frac23z\qqc+\frac\pi4}\bigpar{1+O(|z|\qqcw)}
+O\bigpar{|z|\qqcw}}.
\end{multline}

For the companion Airy function $\Bi(x)$,
we use the following estimate \cite[10.4.63]{AS},
valid along the positive real
axis, and more generally as $\abs z\to\infty$ with
$|\arg z|<\pi/3-\gd$,
\begin{align}
  \Bi(z)
&\sim  \pi\qqw z\qqqqw e^{\frac23z\qqc}. 
\label{Bioo}
\end{align}
On the negative axis, we have \cite[10.4.64]{AS},
\begin{align}
  \Bi(-x)
&=  
\pi\qqw  x\qqqqw\bigpar{\cos\bigpar{\tfrac23x\qqc+\tfrac{\pi}4}+o(1)}.
\label{Bioo-}
\end{align}


\begin{lemma}
  \label{LA}
For every $\gd>0$ there exists $c=c(\gd)$ such that
if $|\arg z|\le\pi-\gd$ and $x\ge0$, then
\begin{equation*}
  {\frac{\Ai(z+x)}{\Ai(z)}} = O\Bigpar{e^{-cx|z|\qq-cx\qqc}}.
\end{equation*}
\end{lemma}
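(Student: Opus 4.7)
The target bound controls the exponent via both $x|z|\qq$ and $x\qqc$, so the natural approach is to apply the uniform asymptotic expansion \eqref{Aioo} (valid throughout the sector $S_\delta\=\set{|\arg z|\le\pi-\delta}$) to both $\Ai(z)$ and $\Ai(z+x)$, reducing everything to a lower bound for $\Re\bigpar{(z+x)\qqc-z\qqc}$. Since the sector is closed under addition of non-negative reals, $z+t\in S_\delta$ for $t\ge0$, and we may write this difference as $\tfrac32\int_0^x(z+t)\qq\dd t$. The problem thus becomes one of estimating $\Re(z+t)\qq$ from below uniformly in $t\ge 0$ and $z\in S_\delta$.

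The key lemma I would prove is that for $z\in S_\delta$ and $t\ge0$, $\Re(z+t)\qq\ge c_\delta(|z|\qq+t\qq)$ for some $c_\delta>0$. Two ingredients go into this. First, since $|\arg(z+t)|\le\pi-\delta$ we have $|\arg(z+t)\qq|\le\pi/2-\delta/2$, and hence $\Re(z+t)\qq\ge\sin(\delta/2)|z+t|\qq$. Second, using $\Re z\ge-|z|\cos\delta$ yields $|z+t|^2\ge|z|^2-2t|z|\cos\delta+t^2$, which by AM--GM applied to $|z|^2+t^2\ge 2|z|t$ is bounded below by $2\sin^2(\delta/2)(|z|^2+t^2)\ge\sin^2(\delta/2)(|z|+t)^2$. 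Integrating the resulting bound for $\Re(z+t)\qq$ over $t\in[0,x]$ then produces
\begin{equation*}
\Re\bigpar{(z+x)\qqc-z\qqc}\ge c_\delta'\bigpar{x|z|\qq+x\qqc}.
\end{equation*}

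With this in hand, the proof splits into two regimes. For $|z|\ge R_0$ (with $R_0$ depending on $\delta$), the asymptotic \eqref{Aioo} applied to both $\Ai(z)$ and $\Ai(z+x)$ gives $\bigabs{\Ai(z+x)/\Ai(z)}\le C\bigabs{z/(z+x)}^{1/4}\exp\bigpar{-\tfrac23\Re((z+x)\qqc-z\qqc)}$; the prefactor is $O(1)$ since $|z+x|\ge\sin(\delta/2)(|z|+x)\ge\sin(\delta/2)|z|$, and the exponential supplies the target decay by the key lemma. For $|z|\le R_0$, $\Ai$ is zero-free on $S_\delta\cap\set{\abs z\le R_0}$ because its zeros lie on the negative real axis, so $1/|\Ai(z)|$ is bounded there; one then handles $x\le 1$ by continuity (both sides are comparable to constants) and $x>1$ by applying \eqref{Aioo} to $\Ai(z+x)$, which now has $|z+x|$ large, using the key lemma to absorb the modest $x|z|\qq$ contribution into $x\qqc$. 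I expect the main obstacle to be the key lemma itself, where one must extract the clean $\delta$-dependent constant $c_\delta$ in $\Re(z+t)\qq\ge c_\delta(|z|\qq+t\qq)$ uniformly for $z$ ranging over the full sector; once that lower bound is in place, the remainder is standard Airy asymptotics and case splitting.
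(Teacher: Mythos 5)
Your proposal is correct and follows essentially the same route as the paper: both reduce to a lower bound on $\Re\bigpar{(z+x)\qqc-z\qqc}=\frac32\Re\int_0^x(z+t)\qq\dd t$, then bound $\Re(z+t)\qq$ from below by combining the sector angle bound $\Re w\ge\sin(\delta/2)|w|$ for $|\arg w|\le\pi/2-\delta/2$ with the elementary estimate $|z+t|\ge c_\delta(|z|+t)$ (which the paper attributes to the sine theorem and you derive by AM--GM). The only stylistic difference is that the paper subsumes your case split on $|z|$ into the single statement $|\Ai(z)|\asymp(1+|z|)\qqqqw\exp(-\frac23\Re z\qqc)$ over the whole sector (valid since $\Ai$ is zero-free there), whereas you handle small $|z|$ explicitly by compactness and zero-freeness; the content is the same.
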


\begin{proof}
We may assume that $0<\gd<\pi/2$.
It follows from \eqref{Aioo} that for $|\arg z|\le\pi-\gd$,
\begin{equation}\label{er1}
  |\Ai(z)|\asymp(1+|z|)\qqqqw\exp\bigpar{-\tfrac23\Re (z\qqc)}.
\end{equation}
Since $|\arg(z+x)|\le|\arg(z)|\le\pi-\gd$,
\eqref{er1} can be used for $z+x$ too.
If $\Re z\le0$, then $|z+x|\ge|\Im z|\ge(\sin\gd)|z|$, and if $\Re z\ge0$, then
$|z+x|\ge|z|$. Hence, \eqref{er1} implies
\begin{equation*}
  \lrabs{\frac{\Ai(z+x)}{\Ai(z)}} 
= O\Bigpar{e^{-\frac23\Re ((z+x)\qqc)+\frac23\Re (z\qqc)}}.
\end{equation*}
Further,
\begin{equation*}
  \begin{split}
  \frac23\Re ((z+x)\qqc)-\frac23\Re (z\qqc)
&=\Re\int_0^x(z+t)\qq\dd t 
\\&
\ge\cos\frac{\pi-\gd}2 \int_0^x|z+t|\qq\dd t.
  \end{split}
\end{equation*}
Moreover, $|z+t|\ge c_1(|z|+t)$ by elementary geometry (use,
\eg, the sine theorem on the triangle with vertices
$0,z,-t$);
hence $|z+t|\qq\ge c_2(|z|+t)\qq$, and the result follows.
\end{proof}

Recall that $R_N\=\bigpar{\frac32\pi N}\qqqb$.

\begin{lemma}
  \label{LA2}
Let $x\ge0$ be fixed. Assume that $z=R_N\eith$ with
$\gth\in[\frac\pi2,\frac{3\pi}2]$ and $N\ge1$.
Then (with implicit constants depending on $x$ but not on $N$ or
$\gth$),
\begin{equation}\label{la2i}
  {\frac{\Ai(z+x)}{\Ai(z)}}
= O\Bigpar{e^{-x R_n\qq|\gth-\pi|/\pi}}
\end{equation}
and
\begin{equation}\label{la2ii}
 \lrabs{\Ai(z)}
\asymp|z|\qqqqw e^{\lrabs{\Im\frac23(-z)\qqc}}
= R_N\qqqqw e^{\frac23 R_n\qqc\sin(3|\gf|/2)}.
\end{equation}
\end{lemma}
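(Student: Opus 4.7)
The plan is to treat both parts via the asymptotic expansion \eqref{Ai-z}. Set $\gf\=\gth-\pi\in[-\pi/2,\pi/2]$ and $w\=-z=R_Ne^{\ii\gf}$, so that $|\arg w|\le\pi/2$, well inside the region of validity of \eqref{Ai-z}. The crucial choice $R_N=(\tfrac32\pi N)\qqqb$ is designed so that $\tfrac23 R_N\qqc=\pi N$; hence at $\gf=0$ the leading phase $\tfrac23 w\qqc+\pi/4$ equals $\pi N+\pi/4$, which is maximally far from the zeros of $\sin$. This is precisely why $\gG_N$ stays uniformly away from the zeros of $\Ai$.

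For part (ii), write $\tfrac23 w\qqc=\ga+\ii\gb$ with $\ga=\tfrac23 R_N\qqc\cos(3\gf/2)$ and $\gb=\tfrac23 R_N\qqc\sin(3\gf/2)$, so $|\gb|=\tfrac23 R_N\qqc\sin(3|\gf|/2)$. The identity
\begin{equation*}
|\sin(u+\ii\gb)|^2=\sin^2 u+\sinh^2\gb
\end{equation*}
gives at once the upper bound $|\sin(\ga+\pi/4+\ii\gb)|\le\cosh|\gb|\asymp e^{|\gb|}$. For the matching lower bound I would split on $|\gb|$: if $|\gb|\ge 1$ then $|\sinh\gb|\ge c\,e^{|\gb|}$ already suffices; if $|\gb|\le 1$ then $|\gf|=O(R_N\qqcw)$, whence $\ga-\pi N=O(R_N\qqc\gf^2)=O(R_N\qqcw)$ is small, so $|\sin(\ga+\pi/4)|$ stays close to $|\sin(\pi N+\pi/4)|=1/\sqrt2$. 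Combining with \eqref{Ai-z} yields \eqref{la2ii} once $R_N$ is large enough to absorb the $O(R_N\qqcw)$ error term; the finitely many small $N$ are handled by continuity, using that $R_N\neq|a_k|$ for any $k$ by \eqref{gl}, so $\Ai$ has no zeros on the compact arc.

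For part (i), by (ii) it suffices to bound $|\Ai(z+x)|$ from above by $CR_N\qqqqw e^{|\gb|-cxR_N\qq|\gf|/\pi}$. For $N$ large compared to $x$, the point $w'\=w-x$ has $|\arg w'|$ bounded away from $\pi$ (since either $\Im w'=R_N\sin\gf$ has large magnitude, or $\gf$ is small and then $\Re w'=R_N\cos\gf-x>0$), so \eqref{Ai-z} applies and gives $|\Ai(z+x)|\le C|w'|\qqqqw e^{|\Im\tfrac23(w')\qqc|}$. A Taylor expansion yields
\begin{equation*}
\tfrac23(w-x)\qqc=\tfrac23 w\qqc-xw\qq+O(x^2|w|\qqw),
\end{equation*}
so $\Im\tfrac23(w-x)\qqc=\gb-xR_N\qq\sin(\gf/2)+O(x^2R_N\qqw)$. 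Assuming $\gf\ge 0$ by conjugation symmetry and noting that $xR_N\qq\sin(\gf/2)\le\gb$ for $R_N$ large (because $\gb\asymp R_N\qqc|\gf|$ whereas $xR_N\qq|\gf|$ is of lower order in $R_N$), the exponent drops by $xR_N\qq\sin(\gf/2)\asymp xR_N\qq|\gf|$, which is exactly \eqref{la2i}.

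The main obstacle will be bookkeeping the uniformity: the asymptotic errors in \eqref{Ai-z}, the Taylor expansion of $w\qqc$, and the lower bound on $|\sin(\ga+\pi/4)|$ all require $R_N$ sufficiently large relative to $x$, and in part (i) the admissibility of \eqref{Ai-z} at $w'$ also fails for the finitely many small $N$ with $R_N\le x$. I plan to fix $N_0=N_0(x)$ above which the asymptotic machinery applies cleanly, and absorb the finitely many $N<N_0$ into the implicit constants by compactness, using the nonvanishing of $\Ai$ on the compact arcs (once again via \eqref{gl}).
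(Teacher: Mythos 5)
Your proposal is correct and follows essentially the same route as the paper: both proofs rest on the asymptotic expansion \eqref{Ai-z}, the design choice $\frac23 R_N^{3/2}=\pi N$ that keeps the phase far from the zeros of $\sin$, the Taylor expansion $\frac23(-z-x)^{3/2}=\frac23(-z)^{3/2}-x(-z)^{1/2}+O(R_N^{-1/2})$, and a case split according to whether $\gf$ is of size $O(R_N^{-3/2})$ or larger (your threshold $|\gb|\lessgtr 1$ is equivalent up to constants). The only cosmetic differences are that you phrase the two-sided bound via the identity $|\sin(u+\ii\gb)|^2=\sin^2u+\sinh^2\gb$ and derive (i) by first establishing (ii), whereas the paper proves \eqref{hag} for general $x$ and then specializes.
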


\begin{proof}
  By symmetry, it suffices to consider $\gth=\pi+\gf$ with
  $0\le\gf\le\pi/2$. Thus $z=-R_N\eigf$.
We have, by Taylor's formula,
\begin{equation}
  \label{la2a}
\frac23\xpar{-z-x}\qqc
= \frac23\xpar{-z}\qqc-x(-z)\qq+O\bigpar{R_N\qqw}
\end{equation}
and thus
\begin{equation}\label{la2b}
\Im \frac23\xpar{-z-x}\qqc
= \frac23R_N\qqc\sin\frac{3\gf}2-xR_N\qq \sin\frac{\gf}2+O\bigpar{R_N\qqw}.
\end{equation}
If $\gf\ge R_N\qqcw$, this shows that, for large $N$,
\begin{equation*}
\Im \frac23\xpar{-z-x}\qqc
\ge \frac4{3\sqrt2\,\pi} R_N\qqc \gf-\frac12 x R_N\qq\gf+O\bigpar{R_N\qqw}
\ge0.2,
\end{equation*}
and thus, by \eqref{Ai-z},
\begin{equation}\label{hag}
\begin{split}
    \lrabs{\Ai(z+x)}
&\asymp
|z+x|\qqqqw \lrabs{\sin\Bigpar{\tfrac23(-z-x)\qqc+\tfrac{\pi}4}}
\\&
\asymp
R_N\qqqqw \exp\bigpar{\Im\tfrac23(-z-x)\qqc}.
\end{split}
\end{equation}
In particular, taking $x=0$ we obtain \eqref{la2ii} for these $\gf$.
Moreover, comparing \eqref{hag} with the special case $x=0$, 
and using \eqref{la2b}, 
\begin{equation*}
  \lrabs{\frac{\Ai(z+x)}{\Ai(z)}}
\asymp
\exp\Bigpar{\Im\tfrac23(-z-x)\qqc-\Im\tfrac23(-z)\qqc}
\asymp
\exp\Bigpar{-xR_N\qq \sin\frac{\gf}2}
.
\end{equation*}
This yields \eqref{la2i} for $R_N\qqcw\le\gf\le\pi/2$, since 
$\sin(\gf/2)\ge\gf/\pi=|\gth-\pi|/\pi$.

For $0\le\gf\le R_N\qqcw$, we find from \eqref{la2b}
$\Im\frac23(-z-x)\qqc=O(1)$, 
so \eqref{Ai-z} yields $\Ai(z+x)=O(|z|\qqqqw)$.
Similarly, by our
choice of $R_N$, 
\begin{equation*}
  \begin{split}
\Re \Bigpar{\frac23\xpar{-z}\qqc}
&= \frac23R_N\qqc\cos\frac{3\gf}2
= \frac23R_N\qqc+O\bigpar{R_N\qqcw}
= \pi N+O\bigpar{R_N\qqcw}.	
  \end{split}
\end{equation*}
Hence, for large $N$ at least, 
$
\bigabs{\sin \bigpar{\frac23\xpar{-z}\qqc+\tfrac\pi4}}\asymp 1
$, 
and, by \eqref{Ai-z} again,
$\Ai(z)=\Theta(|z|\qqqqw)$; consequently
$\Ai(z+x)/\Ai(z)=O(1)$. These are the results we want in this case
because $R_n\qq\gf=O(1)$ and $R_n\qqc\gf=O(1)$.
\end{proof}

Recall also the entire functions $\Gi$ and $\Hi$ defined by
\AS{(10.4.44), (10.4.46)}:
\begin{align}\label{Hi}
  \Hi(z)&\=\pi\qw\intoo  \exp\Bigpar{-\frac13t^3+zt}\dd t
\\
\Gi(z)&\=\Bi(z)-\Hi(z).
\label{GBH}
\end{align}
The integral defining $\Hi$ evidently converges for all
complex $z$. There is a similar integral yielding $\Gi$ \AS{(10.4.42)}:
\begin{equation}
  \Gi(z)\=\pi\qw\intoo
  \sin\Bigpar{\frac13t^3+zt}\dd t,
\qquad z\in\bbR,
\end{equation}
but this integral converges only conditionally for real $z$, and
not at all for $z\notin\bbR$.

We have the asymptotics, as $x\to\infty$, \AS{(10.4.91)}
\begin{equation}
  \label{Hi-}
\Hi(-x)\sim\pi\qw x\qw.
\end{equation}
More generally, for $\Re z<0$,  
expanding $\exp(-t^3/3)$ in \eqref{Hi} yields, for any $L\ge0$,
\begin{equation}
  \label{Hizk}
\Hi(z)
=-\pi\qw\sum_{\ell=0}^{L-1}\frac{(3\ell)!}{3^\ell}z^{-3\ell-1}
+O\lrpar{|\Re z|^{-3L-1}}.
\end{equation}
In particular,
for any $\gd>0$ and $\arg z\in(\frac\pi2+\gd,\frac{3\pi}2-\gd)$,
\begin{equation}
  \label{Hiz}
\Hi(z)
=O\lrpar{|\Re z|^{-1}}
=O\lrpar{| z|^{-1}}.
\end{equation}
More precisely, for such $z$, 
\begin{equation}
  \label{Hiz4}
\Hi(z)
=-\pi\qw z\qw+O\lrpar{| z|^{-4}}.
\end{equation}

We further define, 
using $\intoo\Ai(x)\dd x=1/3$ \cite[10.4.82]{AS},
\begin{equation}
  \label{aAI}
\AI(z)\=\int_z^{+\infty} \Ai(t)\dd t
=\frac13-\int_0^z \Ai(t)\dd t;
\end{equation}
this is well-defined for all complex $z$ and yields an entire
function provided the first integral is taken along, for example, a
path that eventually follows the positive real axis to $+\infty$.
Note that $\AI'(z)=-\Ai(z)$ and that $\AI(0)=1/3$.
Along the real axis we have the limits,
by \cite[10.4.82--83]{AS},
\begin{align}
\lim_{x\to+\infty} \AI(x)&=0,
\label{AI+infty}
\\
\lim_{x\to-\infty} \AI(x)&=\int_{-\infty}^\infty \Ai(x)\dd x=1.
\label{AI-infty}
\end{align}

In terms of the functions $\Gi$ and $\Hi$, we
have, see \cite[10.4.47--48]{AS},
\begin{align}
  \AI(z)
&=\pi\bigpar{\Ai(z)\Gi'(z)-\Ai'(z)\Gi(z)}
\label{AIG}
\\
&=1+\pi\bigpar{\Ai'(z)\Hi(z)-\Ai(z)\Hi'(z)}.
\label{AIH}
\end{align}

We have, see \cite[Appendix A]{SJ201} 
and (for real $z$) \cite[10.4.82--83]{AS},
as $|z|\to\infty$ with $|\arg(z)|<\pi-\gd$,
\begin{equation}\label{AIooz}
    \AI(z)
\sim \frac1{2\sqrt \pi} z^{-3/4} e^{-\frac23z\qqc}, 
\end{equation}
and, along the negative real axis, and more generally for $-z$
with $|\arg(z)|<2\pi/3-\gd$, 
\begin{multline}\label{AI-z}
  \AI(-z)
= 1-\pi\qqw z^{-3/4}
\biggpar{\cos\Bigpar{\frac23z\qqc+\frac\pi4}\bigpar{1+O(|z|\qqcw)}
+O\bigpar{|z|\qqcw}}.
\end{multline}

We have the following estimates.

\begin{lemma}
  \label{LA3}
  \begin{thmenumerate}
\item	
For every fixed $\gd>0$, 
if $|\arg z|\le\pi-\gd$ and $|z|\ge1$, then
\begin{align*}
 \lrabs{\frac{\Ai'(z)}{\Ai(z)}} &\asymp |z|\qq,
\\
 \lrabs{\frac{\AI(z)}{\Ai(z)}} &\asymp |z|\qqw.
\end{align*}
\item	
For $z=R_N\eith$ with
$\gth\in[\frac\pi2,\frac{3\pi}2]$ and $N\ge1$,
\begin{align*}
 \lrabs{\frac{\Ai'(z)}{\Ai(z)}} 
&=O\lrpar{|z|\qq},
\\
 \lrabs{\frac{\AI(z)}{\Ai(z)}} 
&=O\lrpar{ |z|\qqw+|\Ai(z)|\qw}
=O\lrpar{ |z|\qqw+|z|\qqqq e^{-\frac23\lrabs{\Im(-z)\qqc}}}.
\end{align*}
  \end{thmenumerate}
\end{lemma}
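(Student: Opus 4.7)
The plan is to deduce both parts from the asymptotic expansions of $\Ai$, $\Ai'$ and $\AI$ recorded above, combined with the lower bound on $|\Ai(z)|$ furnished by \refL{LA2}.

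For part~(i) I would invoke \eqref{Aioo}, \eqref{Aiioo} and \eqref{AIooz}, all valid uniformly in the sector $|\arg z| \le \pi - \gd$ and sharing the common exponential factor $e^{-\frac{2}{3} z\qqc}$. Taking ratios cancels this factor and yields $\Ai'(z)/\Ai(z) = -z\qq(1+o(1))$ and $\AI(z)/\Ai(z) = z\qqw(1+o(1))$ as $|z|\to\infty$ in the sector, which settles both $\asymp$-estimates for $|z|$ beyond some threshold $R_0$. The remaining compact piece $1 \le |z| \le R_0$ of the sector is handled by compactness: both ratios are analytic there (since $\Ai$ has no zeros off the negative real axis), and because $|z|\qqw \asymp 1$ on this set, the $\asymp$-assertions reduce to showing that the ratios are bounded above and below by positive constants.

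For part~(ii) I would write $z = R_N e^{\ii\gth}$ with $\gth = \pi + \gf$, $|\gf| \le \pi/2$, so that $-z$ lies in the closed right half-plane. Using \eqref{Ai'-z} together with the elementary bound $|\cos w| \le e^{|\Im w|}$ gives $|\Ai'(z)| = O\bigpar{R_N\qqqq\exp\bigpar{\tfrac{2}{3}|\Im(-z)\qqc|}}$. Combined with the lower bound $|\Ai(z)| \asymp R_N\qqqqw \exp\bigpar{\tfrac{2}{3}|\Im(-z)\qqc|}$ from \refL{LA2}\eqref{la2ii}, the exponentials cancel and produce $|\Ai'(z)/\Ai(z)| = O(R_N\qq) = O(|z|\qq)$. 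For the $\AI$-bound, \eqref{AI-z} (applicable since $-z$ has $|\arg| \le \pi/2$) yields $|\AI(z)| = O\bigpar{1 + R_N^{-3/4}\exp\bigpar{\tfrac{2}{3}|\Im(-z)\qqc|}}$; dividing by $|\Ai(z)|$ and using \refL{LA2} once more splits the quotient as $O(|\Ai(z)|\qw) + O(R_N\qqw)$, which is the first claimed form, and substituting $|\Ai(z)|\qw \asymp R_N\qqqq e^{-\frac{2}{3}|\Im(-z)\qqc|}$ yields the second.

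The step I expect to be the main obstacle is the \emph{lower} bound in part~(i) on the compact annulus $1 \le |z| \le R_0$, which amounts to ruling out zeros of $\AI$ inside the closed sector $|\arg z| \le \pi - \gd$. To overcome this I would either sharpen \eqref{AIooz} with an explicit uniform error term extending down to moderate $|z|$, or use the identity \eqref{AIH}, $\AI(z) = 1 + \pi(\Ai'(z)\Hi(z) - \Ai(z)\Hi'(z))$, to isolate the constant $1$ and dominate the correction via \eqref{Hiz4} together with the already-established upper bound on $|\Ai'/\Ai|$.
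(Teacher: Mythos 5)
Your argument takes essentially the same route as the paper's, whose proof is just two citing sentences: part~(i) ``follows from \eqref{Aioo}, \eqref{Aiioo} and \eqref{AIooz},'' and for part~(ii) ``we use \eqref{la2ii}, \eqref{Ai'-z} and \eqref{AI-z}.'' Your reconstruction of part~(ii) is correct and is exactly what those citations unpack to, and your large-$|z|$ treatment of part~(i) is likewise what the paper intends.

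Your extra scruple about the compact annulus $1\le|z|\le R_0$ in part~(i) is legitimate, and the paper does gloss over it. For $\Ai'/\Ai$ it dissolves immediately: the zeros of $\Ai'$, like those of $\Ai$, are all negative real (see \cite[10.4.95]{AS}), so $\Ai'/\Ai$ is continuous and nonvanishing on that compact set and a two-sided bound follows by compactness. For $\AI/\Ai$, though, neither of your two fallbacks actually closes the gap. Sharpening the error term in \eqref{AIooz} only controls the ratio for $|z|$ large, not on a \emph{fixed} annulus, where one genuinely needs $\AI\neq0$. And the identity \eqref{AIH} combined with \eqref{Hiz4} does not cover the whole sector: \eqref{Hiz4} is stated only for $\arg z\in(\pi/2+\gd,3\pi/2-\gd)$, while in the right-hand part of the sector $|\arg z|\le\pi/2+\gd$ the function $\Hi(z)$ grows like $\Bi(z)$, so the ``correction'' in \eqref{AIH} is not small and does not isolate the constant $1$. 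The point is harmless in context, since only the $O$-bounds of Lemma~\ref{LA3} are ever invoked in the paper (in the proofs of Theorems~\ref{T1d} and~\ref{Tmean}), but it does mean your backup plan for the $\asymp$ lower bound on $\AI/\Ai$ would not, as written, go through on the right half of the sector.
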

\begin{proof}
  Part (i) follows from \eqref{Aioo}, \eqref{Aiioo} and \eqref{AIooz}.

For (ii), we use \eqref{la2ii}, \eqref{Ai'-z} and \eqref{AI-z}.
\end{proof}

Let $\glx_k$, $k\ge1$, denote the zeros of the Airy function; these are
all real and negative, so we have
$0>\glx_1>\glx_2>\dots$.
By \AS{(10.4.94), (10.4.105)},
\begin{equation}\label{gl}
  -\glx_k
=\Bigparfrac{3\pi(4k-1)}{8}\qqqb\bigpar{1+O(k\qww)}
\sim\parfrac{3\pi}2\qqqb k\qqqb 
\asymp k\qqqb.
\end{equation}
Thus, by \eqref{Ai'-}, see also \AS{(10.4.96)},
\eqref{Bioo-}, \eqref{Hi-} and \eqref{GBH},
\begin{align}
\label{Ai'glk}
  |\Ai'(\glx_k)|&\asymp |\glx_k|\qqqq \asymp k^{1/6},
\\
\label{Biglk}
  |\Bi(\glx_k)|&\asymp |\glx_k|\qqqqw \asymp k^{-1/6},
\\
\label{Higlk}
  |\Hi(\glx_k)|&\asymp |\glx_k|\qw \asymp k^{-2/3}.
\\
\label{Giglk}
  |\Gi(\glx_k)|&\asymp |\glx_k|\qqqqw \asymp k^{-1/6},
\end{align}

\section{Some Airy integrals}\label{AiryB}

Integrals of the Airy functions (and their derivatives) times powers
of $x$ are easily reduced using the relations
$\Ai''(x)=x\Ai(x)$ and $\Bi''(x)=x\Bi(x)$ and
integration by parts. We have, for example, using also the definition
\eqref{aAI}, 
\begin{align}
  \int \Ai(x)\dd x &= -\AI(x)
\label{IAi} \\
  \int x\Ai(x) \dd x&= \int \Ai''(x)\dd x=\Ai'(x)
\label{IxAi} \\
  \int x^2\Ai(x)\dd x &= \int x\Ai''(x)\dd x=x\Ai'(x)-\int \Ai'(x)\dd x
=x\Ai'(x)-\Ai(x)
\label{Ix2Ai} 
\end{align}
and in general the recursion
{\multlinegap0pt
\begin{multline}
\int x^n\Ai(x)\dd x=\int x^{n-1}\Ai''(x)\dd x
=x^{n-1}\Ai'(x)-(n-1)\int x^{n-2}\Ai'(x)\dd x
\\
=x^{n-1}\Ai'(x)-(n-1)x^{n-2}\Ai(x)
+(n-1)(n-2)\int x^{n-3}\Ai(x)\dd x.
\label{IxnAi}  
\end{multline}
}

Integrals of products of two Airy functions and powers of $x$
can be treated similarly, see \cite{Albright77}; we quote the
following (that are easily verified by differentiation):
\begin{align}
  \int \Ai(x)^2\dd x &=x\Ai(x)^2-(\Ai'(x))^2,
\label{IAi2} \\
  \int x\Ai(x)^2 \dd x&=\tfrac13\bigpar{x^2\Ai(x)^2-x(\Ai'(x))^2+\Ai'(x)\Ai(x)}
\label{IxAi2} \\
  \int x^2\Ai(x)^2\dd x &=
\tfrac15\bigpar{x^3\Ai(x)^2-x^2(\Ai'(x))^2+2x\Ai'(x)\Ai(x)-\Ai(x)^2}
\label{Ix2Ai2} 
\end{align}
and in general the recursion
{\multlinegap0pt
\begin{multline}
\int x^n\Ai(x)^2\dd x
=\tfrac{1}{2n+1}\biggl(
x^{n+1}\Ai(x)^2-x^{n}(\Ai'(x))^2+nx^{n-1}\Ai(x)\Ai'(x)
\\
-\frac{n(n-1)}2 x^{n-2}(\Ai(x))^2
+\frac{n(n-1)(n-2)}2\int x^{n-3}(\Ai(x))^2\dd x\biggr).
\label{IxnAi2}  
\end{multline}
}

By the same method, we can also treat products involving two different
translates of Airy functions; this gives for example the
following (again, these are easily verified by differentiation),
if $a\neq b$ and $c=(a+b)/2$:
\begin{align}
  \int \Ai(x+a)\Ai(x+b)\dd x &
=\frac{1}{a-b}\bigpar{\Ai'(x+a)\Ai(x+b)-\Ai(x+a)\Ai'(x+b)},
\label{IAi2ab} 
\end{align}
{\multlinegap0pt
\begin{multline}
  \int (x+c)\Ai(x+a)\Ai(x+b) \dd x
\\
\begin{aligned}
&=\frac1{(a-b)^2}\Bigl((a-b)(x+c)(\Ai'(x+a)\Ai(x+b)-\Ai(x+a)\Ai'(x+b))
\\
&\hskip6em
{}-2(x+c)\Ai(x+a)\Ai(x+b)+2\Ai'(x+a)\Ai'(x+b)\Bigr)
\\
&\qquad{}+\frac{2}{(a-b)^3}\bigpar{\Ai'(x+a)\Ai(x+b)-\Ai(x+a)\Ai'(x+b)},
\end{aligned}
\label{IxAi2ab}   
\end{multline}}
and the recursion
{\multlinegap0pt
\begin{multline}
\int (x+c)^n\Ai(x+a)\Ai(x+b)\dd x
=
\\
\begin{aligned}
\frac1{(a-b)^2}
\biggl(&
(a-b) (x+c)^n\bigpar{\Ai'(x+a)\Ai(x+b)-\Ai(x+a)\Ai'(x+b)}
\\&
-2n(x+c)^{n}\Ai(x+a)\Ai(x+b)
+2n(x+c)^{n-1}\Ai'(x+a)\Ai'(x+b)
\\&
-n(n-1)(x+c)^{n-2}\bigpar{\Ai'(x+a)\Ai(x+b)+\Ai(x+a)\Ai'(x+b)}
\\&
+n(n-1)(n-2)(x+c)^{n-3}\Ai(x+a)\Ai(x+b)
\\&
+
2n(2n-1)\int(x+c)^{n-1}\Ai(x+a)\Ai(x+b)\dd x
\\&
-
n(n-1)(n-2)(n-3)\int (x+c)^{n-4}\Ai(x+a)\Ai(x+b)\dd x
\biggr).
\end{aligned}
\label{IxnAi2ab}  
\end{multline}
}

In particular, if $a_k$ and $a_\ell$ are zeros of $\Ai$, with $k\neq\ell$,
the formulas above yield,
recalling the rapid decay \eqref{Aioo} and \eqref{Aiioo} at $\infty$:
\begin{align}
  \intoo \Ai(x+a_k)\dd x =
  \intakoo \Ai(x)\dd x &= \AI(a_k)
=-\pi\Ai'(a_k)\Gi(a_k),
\label{IAik} \\
  \intakoo x\Ai(x) \dd x&=-\Ai'(a_k),
\label{IxAik} \\
  \intakoo x^2\Ai(x)\dd x &
=-a_k\Ai'(a_k),
\label{Ix2Aik} 
\\
 \intoo x\Ai(x+a_k)\dd x &=-\Ai'(a_k)-a_k\AI(a_k),
\label{IxAikx} 
\\
  \intoo \Ai(x+a_k)^2\dd x =
  \intakoo \Ai(x)^2\dd x &=(\Ai'(a_k))^2,
\label{IAi2k} \\
  \intakoo x\Ai(x)^2 \dd x&=\tfrac13a_k(\Ai'(a_k))^2,
\label{IxAi2k} \\
  \intakoo x^2\Ai(x)^2\dd x &=
\tfrac15a_k^2(\Ai'(a_k))^2,
\label{Ix2Ai2k} 
\\
 \intoo x\Ai(x+a_k)^2\dd x &=-\tfrac23a_k(\Ai'(a_k))^2,
\label{IxAi2kx} 
\\
  \intoo \Ai(x+a_k)\Ai(x+a_\ell)\dd x &=0,
\label{IAi2abkl} \\
  \intoo x\Ai(x+a_k)\Ai(x+a_\ell) \dd x
&=-\frac2{(a_k-a_\ell)^2}\Ai'(a_k)\Ai'(a_\ell).
\label{IxAi2abkl}   
\end{align}

More generally, for arbitrary complex $a\neq b$, by \eqref{IAi2ab} and
\eqref{IxAi2ab}, again using \eqref{Aioo}, \eqref{Aiioo},
\begin{align}
  \intoo \Ai(x+a)\Ai(x+b)\dd x &
=\frac{1}{a-b}\bigpar{\Ai(a)\Ai'(b)-\Ai'(a)\Ai(b)},
\label{Iooab} 
\\\notag
 \intoo x\Ai(x+a)\Ai(x+b) \dd x
&=\frac{a+b}{(a-b)^2} \Ai(a)\Ai(b)
- \frac2{(a-b)^2}\Ai'(a)\Ai'(b)
\\
&\qquad{}+\frac{2}{(a-b)^3}\bigpar{\Ai(a)\Ai'(b)-\Ai'(a)\Ai(b)}.
\label{Iooxab}   
\end{align}
In particular,
\begin{align}
  \intoo \Ai(x)\Ai(x+a_k)\dd x &
=\frac{\Ai(0)\Ai'(a_k)}{-a_k}
\label{Ioo0k} 
\\
 \intoo x\Ai(x)\Ai(x+a_k) \dd x
&=- \frac2{a_k^2}\Ai'(0)\Ai'(a_k)
-\frac{2}{a_k^3}{\Ai(0)\Ai'(a_k)}.
\label{Ioox0k}   
\end{align}

\begin{remark}
  \label{RSturm}  
The Sturm--Liouville operator $Tf(x)=-f''(x)+xf(x)$ on $[0,\infty)$ with the
boundary condition $f(0)=0$ has 
the eigenfunctions $\Ai(x+a_k)$ with eigenvalues $-a_k=|a_k|$.
\eqref{IAi2abkl} thus expresses the orthogonality of the eigenfunctions
which also follows by general operator theory; in fact, the operator
$T$ is self-adjoint and 
these eigenfunctions form an orthogonal basis in $L^2(0,\infty)$.
The corresponding ON basis is
by \eqref{IAi2k} given by the functions
$\Ai(x+a_k)/\Ai'(a_k)$, $k\ge1$.

For example, \eqref{guy} is the expansion of $G(2\qqqw x)$ in this
ON basis, with coefficients $\pi\Hi(a_k)$, which yields
\eqref{Gparseval} by Parseval's formula.
Similarly, $\Ai(x)$ has the expansion (convergent in $L^2$) 
\begin{equation}
 \Ai(x)=\sumki \frac{\Ai(0)}{\abs{a_k}\Ai'(a_k)}\Ai(x+a_k), 
\end{equation}
where the coefficients are given by \eqref{Ioo0k}, and thus,
using \eqref{IAi2},
\begin{equation}
  \label{Aiparseval}
\frac{\Ai'(0)^2}{\Ai(0)^2}
= \intoo \parfrac{\Ai(x)}{\Ai(0)}^2\dd x = \sumki \frac{1}{a_k^2}.
\end{equation}
\end{remark}

We will also use 
the Laplace transform of the Airy function, which is easily found.
(Taking $z$ imaginary, we obtain the Fourier transform $e^{i\xi^3/3}$
of $\Ai$; this is sometimes taken as the definition of $\Ai$,
see \eg\ \cite[Definition 7.6.8]{Horm1}.)
\begin{lemma}
  \label{LV}
If\/ $\Re z>0$, then 
\begin{equation*}
  \intoooo e^{zt}\Ai(t)\dd t=e^{z^3/3}.
\end{equation*}
\end{lemma}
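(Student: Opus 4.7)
The integral $F(z):=\int_{-\infty}^{\infty} e^{zt}\Ai(t)\,dt$ is absolutely convergent and holomorphic on $\Re z>0$: on $[0,\infty)$ the superexponential decay \eqref{Aioo} dominates $|e^{zt}|$, while on $(-\infty,0]$ the bound $\Ai(t)=O(|t|\qqqqw)$ from \eqref{Ai-} combines with the exponential decay of $|e^{zt}|=e^{(\Re z)t}$. These estimates also justify differentiation under the integral sign, so $F$ is analytic in the right half-plane.

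The strategy is to exploit the Airy equation to derive a first-order linear ODE for $F$, and then pin down a single boundary value. Using $\Ai''(t)=t\Ai(t)$,
\[
F'(z) = \int_{-\infty}^{\infty} t\, e^{zt}\Ai(t)\,dt = \int_{-\infty}^{\infty} e^{zt}\Ai''(t)\,dt.
\]
Integrating by parts twice, the boundary contributions at $\pm\infty$ vanish by \eqref{Aioo} and \eqref{Aiioo} at $+\infty$ and by $\Ai(t),\Ai'(t)=O(|t|^{1/4})$ from \eqref{Ai-}, \eqref{Ai'-} combined with the decay of $e^{zt}$ at $-\infty$. This yields $F'(z)=z^2 F(z)$, so $F(z)=C\,e^{z^3/3}$ throughout $\Re z>0$ for some constant $C$.

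To identify $C=1$, I would let $z\to 0^+$ along the positive real axis. The $[0,\infty)$ piece converges by dominated convergence (dominant: $e^t|\Ai(t)|$, integrable by \eqref{Aioo}) to $\AI(0)=\tfrac13$. For the $(-\infty,0]$ piece, set $H(t):=\int_{-\infty}^t\Ai(s)\,ds = 1-\AI(t)$; by \eqref{AI-infty} and \eqref{aAI}, $H$ is bounded with $H(-\infty)=0$ and $H(0)=\tfrac23$. Integration by parts gives
\[
\int_{-\infty}^0 e^{zt}\Ai(t)\,dt \;=\; H(0) \;-\; z\int_{-\infty}^0 e^{zt}H(t)\,dt,
\]
and the substitution $u=zt$ turns the remainder into $\int_{-\infty}^0 e^u H(u/z)\,du$, which tends to $0$ by bounded convergence as $z\to 0^+$ since $H(u/z)\to 0$ pointwise for $u<0$. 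Adding the two limits, $F(z)\to\tfrac13+\tfrac23=1$, so $C=1$ and $F(z)=e^{z^3/3}$.

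The only genuine subtlety is the last step: because $\Ai$ is not absolutely integrable on $(-\infty,0]$, one cannot invoke dominated convergence directly on the left half-line, but the Abel-type integration by parts using the bounded antiderivative $H$ circumvents this issue cleanly.
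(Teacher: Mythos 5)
Your proof is correct and follows essentially the same route as the paper: establish analyticity, use $\Ai''(t)=t\Ai(t)$ and integration by parts to derive $F'(z)=z^2F(z)$, and determine $C=1$ by letting $z\downto0$ after integrating by parts against a bounded antiderivative of $\Ai$. The only variation is organizational — you split the integral at $0$ and apply dominated convergence separately on each half-line (getting $\tfrac13+\tfrac23$), whereas the paper makes the substitution $u=zs$ on the whole line and applies dominated convergence to $\int e^u\AI(u/z)\,\dd u$; your split arguably makes the domination a little more transparent on the $t>0$ part.
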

\begin{proof}
  By \eqref{Aioo} and \eqref{Ai-}, the integral
  converges absolutely for every $z$ with $\Re z>0$,
  and thus the integral is an analytic function of $z$ in the
  right halfplane, say $F(z)$.
We have, for $\Re z>0$, by $\Ai''(t)=t\Ai(t)$ and
  two integrations by parts,
  \begin{equation*}
	\begin{split}
F'(z)&=\intoooo t e^{zt}\Ai(t)\dd t	  
=\intoooo e^{zt}\Ai''(t)\dd t	  
\\&
=-\intoooo ze^{zt}\Ai'(t)\dd t	  
=\intoooo z^2e^{zt}\Ai(t)\dd t	  
=z^2F(z).
	\end{split}
  \end{equation*}
Hence, $F(z)=Ce^{z^3/3}$ for some $C$.

For $z>0$, another integration by parts yields
  \begin{equation}\label{byx}
	\begin{split}
F(z)&=\intoooo  e^{zt}\Ai(t)\dd t	  
=\intoooo ze^{zs}\int_s^\infty \Ai(t)\dd t\dd s	  
=\intoooo e^{u}\int_{u/z}^\infty \Ai(t)\dd t\dd u.	  
	\end{split}
  \end{equation}
Since $\AI(x)\=\int_x^\infty\Ai(t)\dd t\to0$ as
$x\to+\infty$ 
and $\AI(x)\to1$ as
$x\to-\infty$ by by \eqref{AI+infty}--\eqref{AI-infty},
dominated convergence shows that, letting $z\downto0$ in \eqref{byx},
\begin{equation*}
  C=\lim_{z\downto0} F(z)
=\int_{-\infty}^\infty  e^u\ett{u<0}\dd u 
=\int_{-\infty}^0  e^u\dd u =1.
\qedhere
\end{equation*}
\end{proof}

\begin{remark}\label{RAiinv}
  By Fourier inversion we find, for any $\gs>0$,
\begin{equation}\label{raiinv}
  \Ai(t)=\frac1{2\pi\ii}\intgs e^{-zt+z^3/3}\dd z.
\end{equation}
By analytic extension, this holds for any complex $t$.
\end{remark}

\section{An integral equation for $f_\tau(t)$}\label{AD}

We give here another approach, based on Daniels [personal communication, 1993],
to find the density function $f_\tau$
of the defect stopping time $\tau=\tau_x$, which was the basis of our
development in \refS{SpfT1}. Unfortunately, we have not succeeded to
make this approach rigorous, but we find it intriguing that it
nevertheless yields the right result, so we present it here as an
inspiration for further research.

Let as in \eqref{tau} be the first passage time of $W(t)$ to the
barrier $b(t)\=-x-t^2/2$, where $x>0$ is fixed.
Let $\ftt(t)$ be the (defect) density of $\tau$, and
$\phi(y;t)=e^{-y^2/2t}/\sqrt{2\pi t}$ the density of $W(t)$. 

The first entrance decomposition of $W(t)$ to the region $w<b(t)$
gives the integral equation (using the strong Markov property)
\begin{equation*}
  \phi(w;t)=\intot \ftt(u)\phi\bigpar{w-b(u);t-u}\dd u
\end{equation*}
for $w<b(t)$.
Letting $w\upto b(t)$ we get the equation
\begin{equation}\label{a1}
  \phi(b(t);t)=\intot \ftt(u)\phi\bigpar{b(t)-b(u);t-u}\dd u
\end{equation}
for $t>0$.
(Similar arguments using the last exit decomposition, which leads to
another functional equation involving also another unknown function,
are used by Daniels \cite{D74} and
Daniels and Skyrme \cite{DS85}.)
Since
\begin{equation*}
  b(t)-b(u)=(u^2-t^2)/2
= -(t-u)(t+u)/2,
\end{equation*}
we have by \eqref{a1}
\begin{equation}\label{a2}
\frac{e^{-(x+t^2/2)^2/2t}}{\sqrt{2\pi t}}
=\intot \ftt(u)\frac{e^{-(t-u)(t+u)^2/8}}{\sqrt{2\pi(t-u)}}\dd u.
\end{equation}
The exponents can be written as
$-(t-u)(t+u)^2/8=-t^3/6+u^3/6+(t-u)^3/24$ and 
$-(x+t^2/2)^2/2t = -x^2/2t-xt/2+t^3/24-t^3/6$,
so the integral equation \eqref{a2} can be transformed into
\begin{equation}\label{a3}
\frac{e^{-x^2/2t-xt/2+t^3/24}}{\sqrt{2\pi t}}
=\intot \ftt(u)e^{u^3/6}\frac{e^{(t-u)^3/24}}{\sqrt{2\pi(t-u)}}\dd u.
\end{equation}
This is a convolution equation of the form
\begin{equation*}
  h(t)=\intot g(u)k(t-u)\dd u
\end{equation*}
with 
\begin{align*}
  g(t)&\= \fttt e^{t^3/6}, \\
h(t)&\= \frac{e^{-x^2/2t-xt/2+t^3/24}}{\sqrt{2\pi t}}
=\frac{e^{-b(t)^2/2t+t^3/6}}{\sqrt{2\pi t}}, \\
k(t)&\= \frac{e^{t^3/24}}{\sqrt{2\pi t}}.
\end{align*}
If the Laplace transforms
$
  \tg(s)\=\intoo e^{-st} g(t)\dd t
$ 
\etc\ were finite for $\Re s$ large enough, we could get the solution
from $\tg(s)=\thx(s)/\tk(s)$.
However, the factors $e^{t^3/24}$ in $h(t)$ and $k(t)$ grow too fast,
so $\thx(s)$ and $\tk(s)$ are not finite for any $s>0$ and this method
does not work.
Nevertheless, if we instead define $\hthx(s)$ and $\htk(s)$ by 
$\hthx(s)\=\intgs e^{-st} h(t) \dd t$ and $\htk(s)\=\intgs e^{-st} k(t) \dd t$,
integrating along vertical lines in the complex plane with real part $\gs>0$,
then the  formula $\tg(s)=\hthx(s)/\htk(s)$ yields the correct
formula for $\tg(s)$ and thus for $g(t)$. (Note that $\hthx$ and $\htk$ 
can be seen as Fourier transform of $h$ and $k$ restricted to vertical
lines. The value of $\gs>0$ is arbitrary and does not affect $\hthx$
and $\htk$.)
Let us show this remarkable fact by calculating $\hthx(s)$ and
$\htk(s)$.

Consider first $h(t)$ and express the Gaussian factor by Fourier
inversion:
\begin{equation*}
\frac{e^{-b(t)^2/2t}}{\sqrt{2\pi t}}
=\intgs e^{b(t)u+tu^2/2}\frac{\dd u}{2\pi\ii}, 
\qquad \Re t>0.
\end{equation*}
The exponent $b(t)u+tu^2/2$ can then be written as
$-ux+u^3/6+(t-u)^3/6-t^3/6$, so that
\begin{equation*}
h(t)
=\intgs e^{-ux+u^3/6+(t-u)^3/6}\frac{\dd u}{2\pi\ii}, 
\end{equation*}
and, choosing $\gs_1>\gs>0$ and letting $\gs_2\=\gs_1-\gs$,
\begin{equation*}
  \begin{split}
\hthx(s)
&=\intx{\gs_1}\intgs e^{-st-ux+u^3/6+(t-u)^3/6}\frac{\dd u\dd t}{2\pi\ii} 
\\
&=\intx{\gs}\intx{\gs_2} e^{-s(u+v)-ux+u^3/6+v^3/6}\frac{\dd v\dd u}{2\pi\ii} 
\\
&=\intx{\gs}\intx{\gs_2} e^{-sv+v^3/6} e^{-(s+x)u+u^3/6}
\frac{\dd v\dd u}{2\pi\ii} 
\\
&=2\pi\ii\,\Ai\bigpar{c(s+x)}\Ai(cs)c^2,
  \end{split}
\end{equation*}
with $c\=2\qqq$, using \eqref{raiinv}.

Since $k(t)$ is obtained by putting $x=0$ in $h(t)$, we get directly
\begin{equation*}
  \htk(s)=2\pi\ii\,\Ai(cs)^2c^2
\end{equation*}
and hence
\begin{equation*}
\hthx(s)/ \htk(s)=\Ai\bigpar{2\qqq (s+x)}/\Ai(2\qqq s).
\end{equation*}
This is indeed the Laplace transform of $g(t)=\fttt e^{t^3/6}$
given in \eqref{lt},
which by inversion yields the formulas \eqref{ft2} and \eqref{ft} for $\fttt$.

It seems likely that it should be possible to verify 
the crucial formula $\tg(s)\htk(s)=\hthx(s)$ by suitable manipulations
of integrals, which would give another proof of the formulas
\eqref{ft}--\eqref{lt} for $f_\tau(t)$ and $g(t)$.
For example, if we define, for $\Re t>0$, 
$$F(t)\=h(t)-\int_0^{\Re t}g(u)k(t-u)\dd u$$
(note that $F$ is not analytic),
then $F(t)=0$ for real $t>0$, and it is easily verified that the
equation $\tg(s)\htk(s)=\hthx(s)$ is equivalent to 
$\intgs e^{-st}F(t)\dd t\to0$ as $\gs\to\infty$.
However, we do not know how to verify this directly. 
We therefore leave finding a direct proof of
$\tg(s)\htk(s)=\hthx(s)$  as an open problem. 

\section{An alternative derivation of \eqref{lt}} 
\label{Aalt}

A proof of the formula \eqref{lt} for the Laplace transform of the
density of 
the passage time $\tau$ is given 
by \citet{Groeneboom} 
(in a more general form, allowing
an arbitrary starting point and not just $t=0$, or, equivalently, 
linear term $b t$ in \eqref{wg} or \eqref{tau}; for simplicity we do
not consider this extension). 
His proof uses partly quite technical methods.
For the service of the reader we here present an alternative proof based on the
same ideas but from a different point of view;
we believe that this yields
a more straightforward proof for our purposes.
As discussed in \refR{RFT}, this implies \eqref{ft}  and \eqref{lj1},
so it gives us a self-contained proof of the central \refL{LJ1}.

As in \cite{Salminen,ML98} we consider the process with drift $t^2/2$
defined by $X(t)\=x+t^2/2+W(t)$, so that $\tau$ defined by \eqref{tau}
(with $\gb=1/2$ as in \eqref{lj1}--\eqref{lt})
is the first hitting time of $X(t)=0$.
If $P_x$ is the probability measure (on the space $C[0,\infty)$)
corresponding to $X(\cdot)$, and $Q_x$ that corresponding to $x+W(\cdot)$,
then the Cameron--Martin formula tells us that, considering the
restriction to a finite time interval $[0,t]$, the Radon--Nikodym derivative
is, using $\dd X(s)=s\dd s+\dd W(s)$,
\begin{equation*}
  \begin{split}
\frac{\dd P_x(X)}{\dd Q_x(X)}	
&=\exp\lrpar{-\frac12\intot  
\lrpar{\frac{(\dd X(s)-s\dd s)^2}{\dd s}
-\frac{\dd X(s)^2}{\dd s}
}}
\\&
=\exp\lrpar{\intot s\dd X(s)-\frac12\intot s^2\dd s} 
\\&
=\exp\lrpar{-\frac{t^3}6+tX(t)-\intot X(s)\dd s} ,
  \end{split}
\end{equation*}
using integration by parts. Hence, letting $\Ex$ denote expectation
with respect to the Wiener measure $Q_x$,
\begin{equation}\label{d1}
  \begin{split}
P_x(\tau>t) = \Ex\Bigpar{\frac{\dd P_x}{\dd Q_x}; \tau>t}	
=e^{-t^3/6} \Ex\lrpar{e^{tX(t)-\intot X(s)\dd s};\tau>t}
  \end{split}
\end{equation}
(\cf{} \cite[Theorem 2.1]{Salminen} and \cite[Lemma 2.1]{Groeneboom}).
We introduce the ``Green function''
\begin{equation*}
  F(x,y,t)\=\Ex\lrpar{e^{-\intot X(s)\dd s}\gd\bigpar{X(t)-y};\tau>t},
\end{equation*}
where $\gd$ is the Dirac delta function (formally, $F(x,\cdot,t)$ is defined as
the density of the corresponding occupation measure); \eqref{d1} then
says that
\begin{equation}\label{d2}
  \begin{split}
P_x(\tau>t) 
=e^{-t^3/6} \intoo F(x,y,t)e^{ty}\dd y.
  \end{split}
\end{equation}
The Feynman--Kac formula tells us that for fixed $y>0$,
$F(x,y,t)$ is the fundamental
solution of the equation
\begin{equation}\label{FK}
  \frac{\partial F}{\partial t}
=\frac12 \frac{\partial^2 F}{\partial x^2}-xF,
\qquad x,t>0,
\end{equation}
with the boundary conditions $F(0,y,t)=F(\infty,y,t)=0$. 
By time-reversal (or by symmetry of the resolvent $R(x,y,z)$ 
in \eqref{r} below),
$F(x,y,t)=F(y,x,t)$; hence also 
\begin{equation}\label{FKy}
  \frac{\partial F}{\partial t}
=\frac12 \frac{\partial^2 F}{\partial y^2}-yF,
\qquad x,y,t>0,
\end{equation}

Differentiating \eqref{d2} under the integral sign 
and using the Feynman--Kac equation \eqref{FKy}, we obtain
(cf.\ \cite[Lemma 2.2]{Groeneboom})
\begin{equation}\label{d5}
  \begin{split}
e^{t^3/6}f_\tau(t)
&=-e^{t^3/6}\frac{\dd}{\dd t}\lrpar{
e^{-t^3/6} \intoo F(x,y,t)e^{ty}\dd y}
\\&
=\intoo\lrpar{\frac{t^2}2F(x,y,t)-\frac{\partial F(x,y,t)}{\partial	t}
-yF(x,y,t)}e^{ty}\dd y
\\&
=\intoo\lrpar{\frac{t^2}2F(x,y,t)
-\frac12\frac{\partial^2 F(x,y,t)}{\partial	y^2}
}e^{ty}\dd y
\\&
=\frac{1}2\lrsqpar{te^{ty}F(x,y,t)-e^{ty}\frac{\partial F(x,y,t)}{\partial	y}
}_0^\infty
\\&
=\frac{1}2\frac{\partial F}{\partial	y}(x,0,t),
  \end{split}
\end{equation}
since  $F(x,0,t)=0$ and $F$ and its derivatives decrease
rapidly as $y\to\infty$.

The Laplace transform of $F(x,y,\cdot)$ is the resolvent, defined at
least for $\Re z>0$, 
\begin{equation}\label{rdef}
  R(x,y,z)\=\intoo e^{-zt}F(x,y,t)\dd t.
\end{equation}
The Feynman--Kac equation is a second order differential equation,
and the theory of such equations 
(see also \cite[Appendix C; in particular (379)]{SJ201} for this
particular case)
tells us that 
\begin{equation}\label{r}
  R(x,y,z)=
  \begin{cases}
	\frac2w\gfo(x;z)\gfoo(y;z), & 0<x\le y, \\
	\frac2w\gfo(y;z)\gfoo(x;z), & 0<y\le x,
  \end{cases}
\end{equation}
where $\gfo(x)=\gfo(x;z)$ and $\gfoo(x)=\gfoo(x;z)$ are solutions of
the differential equation 
\begin{equation}
  \label{d3a}
\tfrac12\gf''(x)-x\gf(x)=z\gf(x)
\end{equation}
with boundary conditions $\gfo(0)=0$, $\gfoo(\infty)=0$, and $w$ is
the Wronskian
$w\=\gfoo(x)\gfo'(x)-\gfo(x)\gfoo'(x)$ (which is constant in $x$).
Note that since $\gfo(0)=0$, 
\begin{equation}
  \label{w}
w=\gfoo(0)\gfo'(0).
\end{equation}

The differential equation \eqref{d3a} has two linearly independent solutions
$A(x+z)$ and $B(x+z)$ with $A(x)\=\Ai(cx)$ and $B(x)\=\Bi(cx)$ with
$c\=2\qqq$, and in terms of these we have (up to arbitrary constant factors)
\begin{align}
  \gfo(x;z)&=A(z)B(x+z)-B(z)A(x+z),\\
\gfoo(x;z)&={A(x+z)}. \label{gfoo}
\end{align}

We integrate \eqref{d5} (multiplied by $e^{-zt}$) and differentiate
\eqref{rdef} and obtain, using also \eqref{r}, \eqref{w} and \eqref{gfoo}
\begin{equation*}
  \begin{split}
\intoo e^{-zt}e^{t^3/6}f_\tau(t)\dd t
&=\frac{1}2\intoo e^{-zt}\frac{\partial F}{\partial	y}(x,0,t)\dd t
=\frac{1}2\frac{\partial R}{\partial	y}(x,0,z)
\\&
=\frac{1}w\gfo'(0;z)\gfoo(x;z)
=\frac{\gfoo(x;z)}{\gfoo(0;z)}
\\&
=\frac{A(x+z)}{A(z)},
  \end{split}
\end{equation*}
which is \eqref{lt}.

\newcommand\AAP{\emph{Adv. Appl. Probab.} }
\newcommand\JAP{\emph{J. Appl. Probab.} }
\newcommand\JAMS{\emph{J. \AMS} }
\newcommand\MAMS{\emph{Memoirs \AMS} }
\newcommand\PAMS{\emph{Proc. \AMS} }
\newcommand\TAMS{\emph{Trans. \AMS} }
\newcommand\AnnMS{\emph{Ann. Math. Statist.} }
\newcommand\AnnPr{\emph{Ann. Probab.} }
\newcommand\CPC{\emph{Combin. Probab. Comput.} }
\newcommand\JMAA{\emph{J. Math. Anal. Appl.} }
\newcommand\RSA{\emph{Random Struct. Alg.} }
\newcommand\ZW{\emph{Z. Wahrsch. Verw. Gebiete} }
\newcommand\DMTCS{\jour{Discr. Math. Theor. Comput. Sci.} }

\newcommand\AMS{Amer. Math. Soc.}
\newcommand\Springer{Springer-Verlag}
\newcommand\Wiley{Wiley}

\newcommand\vol{\textbf}
\newcommand\jour{\emph}
\newcommand\book{\emph}
\newcommand\inbook{\emph}
\def\no#1#2,{\unskip#2, no. #1,} 
\newcommand\toappear{\unskip, to appear}

\newcommand\webcite[1]{
\texttt{\def~{{\tiny$\sim$}}#1}\hfill\hfill}
\newcommand\webcitesvante{\webcite{http://www.math.uu.se/~svante/papers/}}
\newcommand\arxiv[1]{\webcite{arXiv:#1.}}

\def\nobibitem#1\par{}

\end{document}